\newcommand{\R}{\mathbb{R}}
\newcommand{\C}{\mathbb{C}}
\newcommand{\T}{\mathcal{T}}
\newcommand{\N}{\mathbb{N}}
\newcommand{\ci}{\mathrm{i}} 
\newcommand{\PR}{P_{h}}
\newcommand{\triangleL}{L}
\newcommand{\Ltwo}[2]{\langle #1 , #2 \rangle_{L^2({\mathcal{D}})}}
\newcommand{\energy}[1]{\|{#1}\|_{E({\mathcal{D}})}}
\newcommand{\deltat}[1]{\tau_{#1}}
\newcommand{\gmin}{\gamma_{\operatorname{min}}}
\newcommand{\gmax}{\gamma_{\operatorname{max}}}
\newcommand{\hmin}{h_{\mbox{{\rm\tiny min}}}}
\newcommand{\hmax}{h_{\mbox{{\rm\tiny max}}}}
\definecolor{dark-green}{rgb}{0.0,0.4,0.0}
\newtheorem{theorem}{Theorem}[section]
\newtheorem{lemma}[theorem]{Lemma}
\newtheorem{proposition}[theorem]{Proposition}
\theoremstyle{definition}
\newtheorem{definition}[theorem]{Definition}
\newtheorem{remark}[theorem]{Remark}
\newtheorem{conclusion}[theorem]{Conclusion} %
\newenvironment{fshaded}{%
\MakeFramed {\FrameRestore}}%
{\endMakeFramed}
\newtheorem{cstep}{Step}
\begin{document}

\begin{center}
{\LARGE The Finite Element Method for the time-dependent Gross-Pitaevskii equation with angular momentum rotation\renewcommand{\thefootnote}{\fnsymbol{footnote}}\setcounter{footnote}{0}
 \hspace{-3pt}\footnote{This work was supported by 
 the Swedish Research Council.}}\\[2em]
\end{center}

\renewcommand{\thefootnote}{\fnsymbol{footnote}}
\renewcommand{\thefootnote}{\arabic{footnote}}

\begin{center}
{\large Patrick Henning\footnote[1]{Department of Mathematics, KTH Royal Institute of Technology, 
SE-100 44 Stockholm, Sweden},
Axel M\r{a}lqvist\footnote[2]{Department of Mathematical Sciences, Chalmers University of Technology and University of Gothenburg, SE-412 96 G\"oteborg, Sweden}}\\[2em]
\end{center}

\begin{center}
{\large{\today}}
\end{center}

\begin{center}
\end{center}

\begin{abstract}
We consider the time-dependent Gross-Pitaevskii equation describing the dynamics of rotating Bose-Einstein condensates and its discretization with the finite element method. We analyze a mass conserving Crank-Nicolson-type discretization and prove corresponding a priori error estimates with respect to the maximum norm in time and the $L^2$- and energy-norm in space. The estimates show that we obtain optimal convergence rates under the assumption of additional regularity for the solution to the Gross-Pitaevskii equation. We demonstrate the performance of the method in numerical experiments.
\end{abstract}

\section{Introduction}

When a dilute gas of a certain type of Bosons is trapped by a potential and afterwards cooled down to extremely low temperatures close to the absolute minimum of $0$ Kelvin, a so called Bose-Einstein condensate (BEC) is formed \cite{Bos24,DGP99,Ein24,PiS03}. Such a condensate consists of particles
that occupy the same quantum state.
That means that they are no more distinguishable from each other and that they behave in their collective like one single 'super-atom'. Recent overviews on the mathematics for Bose-Einstein condensates are given in \cite{BaC13b,Bao14}.

In this work, we focus on the specific case of Bose-Einstein condensates in a rotational frame \cite{FSF01}. One of the interesting features of a Bose-Einstein condensate is its superfluid behavior. In order to distinguish a superfluid from a normal fluid at the quantum level, one needs to verify the formation of vortices with a quantized circulation (cf. \cite{Aft06} for an introduction in the context of BECs). In experimental setups the formation of such vortices may be triggered by rotating the condensate. This can be achieved by using a stirring potential which is generated by imposing laser beams on the magnetic trap (cf. \cite{AfM10,MCW00,MCB01,SBC04,MAH99,SBB05}). If the rotational speed is sufficiently large, the vortices can be detected (cf. \cite{ARV01}). In particular, the equilibrium velocity of the BEC
can no longer be identified with
a solid body rotation
and it can be observed that the rotational symmetry breaks (cf. \cite{Sei02} for an analytical proof). The number of vortices strongly depends on the rotation frequency. However, if the rotational speed is too low no vortices arise and if the rotational speed is too large (relative to the strength of the trapping potential) the BEC can be destroyed by centrifugal forces. Analytical results concerning the formation, or lack, of vortices, their stability, types and structures depending on the rotational speeds and trapping potentials is found in \cite{AJR11,BWM05,CoR13,LiS06,Rou11,Sei02}. Detailed numerical investigations are given in \cite{BDZ06,BWM05}.

The formation and the dynamics of BECs are typically modeled by the Gross-Pitaevskii equation (GPE) which is a Schr\"odinger equation with an additional nonlinear term that accounts for particle-particle interactions \cite{Gro61,LSY01,Pit61}. To account for a rotating BEC, it is common to extend this model by an angular momentum term.
Let ${\mathcal{D}} \subset \R^d$, $d=2,3$, be a bounded convex Lipschitz domain and $[0,T] \subset \R$ a time interval. We consider the dimensionless time-dependent Gross-Pitaevskii equation. For the case $d=3$ we seek the complex-valued wave function $u : {\mathcal{D}} \times [0,T] \rightarrow \C$ that describes the quantum state of the condensate. It is the solution with initial state $u(\cdot,0)=u_0$ to the nonlinear Schr\"odinger equation
\begin{align}
\label{gp-equation-with-rotation-full}
\ci \partial_t u &= -\frac{1}{2} \triangle u + V \hspace{2pt} u + \ci \boldsymbol \Omega \cdot \left( \mathbf{x} \times \nabla \right) u + \beta |u|^2 u \qquad \mbox{in} \enspace {\mathcal{D}},\\
\nonumber u&=0 \qquad \hspace{182pt} \mbox{on} \enspace \partial {\mathcal{D}},
\end{align}
where we denote $\mathbf{x}=(x,y,z)\in\R^3$. Here, $V$ characterizes the magnetic trapping potential that confines the system (by adjusting $V$ to some trap frequencies) and the nonlinear term $\beta |u|^2 u$ describes the species of the bosons and how they interact with each other. In particular, $\beta$ depends on the number of bosons, their individual mass and their scattering length. We assume that $\beta$ is strictly positive (which means that we have a repulsive interaction between the particles). The term $\ci \boldsymbol \Omega \cdot \left( \mathbf{x} \times \nabla \right) u$ characterizes the angular rotation of the condensate, where $\boldsymbol{\Omega}\in \R^3$ defines the angular velocity. As usual, the operator $\mathbf{L}=(\mathcal{L}_x,\mathcal{L}_y,\mathcal{L}_z):=- \ci \left( \mathbf{x} \times \nabla \right)=\mathbf{x} \times \mathbf{P}$  describes the angular momentum, with $\mathbf{P}=- \ci \nabla$ denoting the momentum operator.

In the following, we assume that the rotation is around the $z$-axis, which leads to the simplification
$\ci \boldsymbol \Omega \cdot \left( \mathbf{x} \times \nabla \right) = - \Omega \mathcal{L}_z$, where $\mathcal{L}_z = - \ci \left( x \partial_y - y \partial_x \right)$ is the $z$-component of the angular momentum. With this simplification the weak formulation of problem (\ref{gp-equation-with-rotation-full}) (respectively its dimension reduced version in $2d$) reads: find $u \in C^0([0,T),H^1_0({\mathcal{D}}))$ and $\partial_t  u \in C^0([0,T),H^{-1}({\mathcal{D}}))$ such that $u(\cdot,0)=u_0$ and
\begin{eqnarray}
\label{weak-formulation-gpe}\lefteqn{\ci \Ltwo{\partial_t u(\cdot,t)}{\phi} = \frac{1}{2} \Ltwo{\nabla u(\cdot,t)}{\nabla \phi}}\\
\nonumber&\enspace& + \Ltwo{V u(\cdot,t)}{ \phi}
- \Omega \Ltwo{\mathcal{L}_z u(\cdot,t) }{\phi}
+\beta  \Ltwo{|u(\cdot,t)|^2 u(\cdot,t)}{\phi}
\end{eqnarray}
for all $\phi \in H^1_0({\mathcal{D}})$ and almost every $t \in (0,T)$. Here, $\Ltwo{ \cdot }{ \cdot }$ denotes the standard $L^2$-scalar product for complex valued functions, i.e. $\Ltwo{ v }{ w } = \int_{\mathcal{D}} v(\mathbf{x}) \overline{w(\mathbf{x})} \hspace{2pt} d \mathbf{x}$ for $v,w \in L^2({\mathcal{D}})$.

A recent existence and uniqueness result concerning the solution of (\ref{weak-formulation-gpe}) can be found in \cite{AMS12} for the case of the three dimensional Cauchy problem, i.e. for the case ${\mathcal{D}}=\R^3$ (see also \cite{HHL07} for an earlier work). A general comprehensive overview on existence and uniqueness of nonlinear Schr\"odinger equations can be found in the book by Cazenave \cite{Caz03}.

The literature on the numerical treatment of (\ref{weak-formulation-gpe}) is rather limited for the case $\Omega\neq 0$. Very efficient methods that exploit Fourier expansions were proposed in \cite{BaW06,BLS09,BMT13}: in \cite{BLS09} a time-splitting method is proposed that is based on the scaled generalized-Laguerre, Fourier and Hermite functions, whereas in \cite{BMT13} it is suggested to discretize (\ref{weak-formulation-gpe}) in rotating Lagrangian coordinates. A finite difference discretization is discussed in \cite{BaC13}. A comparative overview on different time-discretization is given in \cite{ABB13c}. Concerning the numerical treatment of the eigenvalue problem associated with (\ref{weak-formulation-gpe}), we refer to \cite{AnD14,DaK10}.

Even though spectral and pseudo-spectral methods (such as the explicit methods proposed in \cite{BaW06,BLS09,BMT13}) are typically computationally cheaper than a pure finite element based approach as proposed in this paper, they generally require a high smoothness of the magnetic potential to work. Non-smooth potentials can for instance arise in the context of investigating Josephson effects (cf. \cite{WWW98,ZSL98}) or experiments involving very rough disorder potentials (cf. \cite{NBP13}). In corresponding numerical simulations, the usage of finite elements seems to be unavoidable for an efficient method. Another advantage of finite elements is that they can be easily combined with mesh adaptivity, as it is helpful to resolve localized vortices.
%


There are some results concerning the convergence of numerical methods with respect to the space discretization. Concerning $P1$ finite elements and for the particular case that $V=0$, $\Omega=0$ and that the spatial mesh is quasi-uniform, a priori error estimates can be found in \cite{ADK91,KaM98,KaM99,San84,Tou91,Wan14,Zou01}. We describe those results in chronological order.

The first results were obtained by Sanz-Serna \cite{San84} who considered a modified Crank-Nicolson scheme that conserves the mass and the energy. For the case $d=1$ and a periodic boundary condition, optimal $L^2$-error estimates were derived with a quadratic order convergence in time. A necessary condition for the analysis was that the time step size $\deltat{}$ can be bounded by the mesh size $h$, i.e. the time step size is constrained by $\deltat{}=\mathcal{O}(h)$.

 In \cite{ADK91}, Akrivis et al. generalize the a priori $L^2$-error estimates of \cite{San84} to $d=1,2,3$ and to the case of a homogenous Dirichlet boundary condition. Furthermore they could relax the constraint for the time step size to the condition $\deltat{}=\mathcal{O}(h^{d/4})$. Beside the modified Crank-Nicolson scheme, the authors also study a one-stage Gauss-Legendre implicit Runge-Kutta scheme (IRK) that we will also consider in this paper. The IRK is still mass-conservative, but does no longer conserve the energy. 
However, as we will see numerically, the energy deviation is marginal. Again, the condition $\deltat{}=\mathcal{O}(h^{d/4})$ is required. Furthermore, the authors propose and analyze a Newton-scheme for solving the nonlinear problems that arise in each time step.

In \cite{Tou91}, Tourigny investigates the case of optimal $L^{\infty}$- and $H^1$-error estimates. He analyzes the same implicit Runge-Kutta scheme as considered in \cite{ADK91} and recovers the constraint $\deltat{}=\mathcal{O}(h^{d/4})$. Furthermore, he investigates a classical Backward-Euler discretization for which the more severe constraint $\deltat{}=\mathcal{O}(h^{d/2})$ is required. However, as we will see in our analysis below, both constraints are not optimal. For instance for the Backward-Euler scheme, for any $s>1$, we can improve it to $\deltat{}=\mathcal{O}(|\ln{h}|^{-s/2})$ for $d=2$ and to $\deltat{}=\mathcal{O}(h^{s/2})$ for $d=3$ (see Theorem \ref{final-a-priori-error-estimate} below).

Concerning higher order schemes (without conservation properties), a space-time finite element method
was proposed and analyzed in \cite{KaM98,KaM99} for the case $d=2$ and for
graded meshes.
Here, \cite{KaM98} is devoted to the case of a Discontinuous Galerkin time discretization and \cite{KaM99} is devoted to a Continuous Galerkin time discretization. Optimal error estimates in $L^2$ and $H^1$ are derived. Here the constraint (for $d=2$) reads $\deltat{}^p=\mathcal{O}(|\ln{h}|^{-s})$ for some $s>1$ and where $p$ denotes the polynomial degree used for the time discretization. Hence, it excludes lowest order schemes such as the Backward-Euler scheme for which $p=0$.

In \cite{Zou01}, Zouraris considers a mass conservative linearly implicit two-step finite element method. Zouraris proves optimal order $L^2$- and $H^1$-error estimates under the mild time step conditions $\deltat{}=\mathcal{O}(|\ln{h}|^{-1/3})$ for $d=2$ and $\deltat{}=\mathcal{O}(h^{1/3})$ for $d=3$.

In a recent work \cite{Wan14}, Wang studies a new type of a linearized Crank-Nicolson discretization which is mass but not energy conservative. Again, optimal order $L^2$-error estimates are derived however with the breakthrough that no constraints for the coupling between time step size and mesh size are required. The condition of quasi-uniformity is still necessary.

Concerning the convergence of space discretizations for the nonlinear GPE eigenvalue problem (again for $\Omega=0$)
we refer to \cite{CCM10} for optimal convergence rates in Fourier and finite elements spaces and to \cite{HMP14b} for a two level discretization technique based on suitable orthogonal decompositions.
Regarding the Gross-Pitaevskii equation with rotation term (i.e. $\Omega \neq 0$), we are only aware of the work by Bao and Cai \cite{BaC13} where optimal error estimates for the finite difference method are proved. So far, there seem to be no results concerning finite element approximations.

In this work we present an error analysis for a Crank-Nicolson-type finite element approximation of the time-dependent GPE with rotation. More precisely, we analyze the one-stage Gauss-Legendre implicit Runge-Kutta scheme earlier considered by Akrivis et al. \cite{ADK91} and Tourigny \cite{Tou91}. We generalize these work with respect to two points: we consider the equation with potential and with an angular momentum rotation,
and for arbitrary $s>1$ we show that the time step constrained $\deltat{}=\mathcal{O}(h^{d/4})$ can be relaxed to $\deltat{}=\mathcal{O}(|\ln{h}|^{-s/4})$ for $d=2$ and to $\deltat{}=\mathcal{O}(h^{s/4})$ for $d=3$.
We do not consider Fourier approaches here (even though they can be computationally more efficient in many applications),
since they require smoothness of the trapping potential, whereas the strength of finite element approaches lies in the fact that they do not require such smoothness and that it can be easily combined with adaptive mesh refinement strategies. This might be necessary in experiments involving disorder potentials.

{\bf Outline.} In Section \ref{section-mod-prob-and-prelim} we establish our model problem and state the basic preliminaries. The main results are presented in Section \ref{section-disc-and-main-result}, where we state a Crank-Nicolson-type time- and P1 Finite Element space discretization of the Gross-Pitaevskii equation. Furthermore, corresponding a priori error estimates in the $L^{\infty}(L^2)$-norm and in the $L^{\infty}(H^1)$-norm are given. The proof of
these estimates
takes place in several steps.
First we introduce a general framework and some auxiliary results by investigating the fully continuous problem in weak formulation.
This is done in Section \ref{section-aux-results-analytical}.
In Section \ref{section-exst-uni-disc-sol} we show well-posedness of the numerical scheme presented in Section \ref{section-disc-and-main-result}.
Furthermore, we introduce a regularized discrete auxiliary problem which will turn out to produce the same solutions as the
considered
Crank-Nicolson-type
Finite Elemente scheme
(under suitable assumptions).
Finally, in Section \ref{final-apriori-proofs} we derive an error identity and estimate the arising terms. At the end of this section, all results are combined to finish the proof of the main theorem. 
We conclude the paper with numerical experiments in Section \ref{section-num-sec}.

\section{Model problem and preliminaries}
\label{section-mod-prob-and-prelim}

Let $d=2,3$ denote the space dimension. In order to keep our analysis as general as possible, we subsequently consider a slightly generalized Gross-Pitaevskii model. Before stating the problem and a corresponding set of assumptions, we introduce our basic notation.

By $\overline{x}$ we denote the complex conjugate of a complex number $x \in \C$, by $\mathbf{x} \cdot\mathbf{y}$ we denote the Euclidean scalar product between $\mathbf{x},\mathbf{y}\in \C^d$ (i.e. $ \mathbf{x} \cdot \mathbf{y}:=\sum_{i=1}^d x_i \overline{y}_i$) and by $|\mathbf{x}|:=\sqrt{\mathbf{x} \cdot\mathbf{x}}$ we denote the corresponding norm. The real part of a complex number is denoted by $\Re$ and by $\Im$ its imaginary part. We furthermore use the standard notation for the Sobolev spaces $W^{k,p}({\mathcal{D}})$ (for $0\le k < \infty$ and $1 \le p \le \infty$) equipped with the norm
\begin{align*}
\| v \|_{W^{k,p}({\mathcal{D}})}:=
\begin{cases}
\left( \sum_{|\boldsymbol{\alpha}|\le k} \| \partial^{\boldsymbol{\alpha}} v \|_{L^p({\mathcal{D}})}^p \right)^{1/p}, \quad &\mbox{for } 1\le p < \infty,\\
\max_{|\boldsymbol{\alpha}|\le k} \| \partial^{\boldsymbol{\alpha}} v \|_{L^{\infty}({\mathcal{D}})}, \quad &\mbox{for } p = \infty.
\end{cases}
\end{align*}
For $p=2$ we write as usual $H^k({\mathcal{D}}):=W^{k,2}({\mathcal{D}})$. The semi-norms on $H^{k}({\mathcal{D}})$ are denoted by
$$|v|_{H^k({\mathcal{D}})} := \left( \sum_{|\boldsymbol{\alpha}|= k} \| \partial^{\boldsymbol{\alpha}} v \|_{L^2({\mathcal{D}})}^2 \right)^{1/2}.$$

We consider the following model problem.
\begin{definition}[Model problem]
We consider the (smooth) linear differential operator $L:H^1_0({\mathcal{D}})\rightarrow H^{-1}({\mathcal{D}})$ that is associated with the following bilinear form,
\begin{align}
\label{smooth-linear-operator}\langle L(v),w \rangle_{H^{-1}({\mathcal{D}}),H^1({\mathcal{D}})} := \int_{\mathcal{D}} A(\mathbf{x}) \nabla v(\mathbf{x}) \cdot \overline{\nabla w(\mathbf{x})} + \ci b(\mathbf{x}) \cdot \nabla v(\mathbf{x}) \overline{w(\mathbf{x})} + c(\mathbf{x}) v(\mathbf{x}) \overline{w(\mathbf{x})} \hspace{2pt} d\mathbf{x}.
\end{align}
With the above definition we seek
$u \in L^{\infty}([0,T),H^1_0({\mathcal{D}}))$ and $\partial_t  u \in L^{\infty}([0,T),H^{-1}({\mathcal{D}}))$
such that $u(\cdot,0)=u_0$ and
\begin{eqnarray}
\label{model-problem}\ci \Ltwo{\partial_t u(\cdot,t)}{w} =
\langle L(u(\cdot,t)),w \rangle_{H^{-1}({\mathcal{D}}),H^1({\mathcal{D}})}
+ \Ltwo{(\kappa(\cdot) + \beta |u(\cdot,t)|^2) u(\cdot,t)}{w}
\end{eqnarray}
for all $w \in H^1_0({\mathcal{D}})$ and almost every $t \in (0,T)$. Note that any such solution automatically fulfills $u \in C^0([0,T),L^2({\mathcal{D}}))$ so that $u(\cdot,0)=u_0$ makes sense.
\end{definition}

Here we make the following assumptions.
\begin{itemize}
\item[(A1)] The computational domain ${\mathcal{D}} \subset \R^d$ (for $d=2,3$) is a convex bounded polyhedron.
\item[(A2)] The coefficients $A$, $b$ and $c$ are real valued, smooth and bounded (i.e. $L$ represents the smooth linear part of the problem). On the other hand, we assume $\kappa \in L^{\infty}({\mathcal{D}},\C)\cap W^{1,3}({\mathcal{D}},\C)$; $\beta \in \R_{\ge 0}$ and $u_0 \in H^2({\mathcal{D}})\cap H^1_0({\mathcal{D}})$.
\item[(A3)] The real matrix-valued coefficient $A=A(\mathbf{x})$ is symmetric and there exist positive constants
$\gmin>0$ and $\gmax\geq\gmin$ such that
\begin{equation}\label{e:spectralbound}
\gmin |\xi|^2 \le A(\mathbf{x}) \xi \cdot \xi \le \gmax |\xi|^2 \quad\text{for all }(\xi,\mathbf{x})\in \R^d \times {\mathcal{D}}.
\end{equation}
By the properties of $A$
there exists a pointwise invertible matrix-valued coefficient $A^{1/2}$ such that $A^{1/2} A^{1/2} = A$. We denote its inverse by $A^{-1/2}:=(A^{1/2})^{-1}$.
\item[(A4)] The real vector-valued coefficient $b=b(\mathbf{x})$ is divergence free, i.e. $\nabla \cdot b=0$.
\item[(A5)] It holds $\Re(\kappa)\ge 0$ and the real coefficient $c=c(\mathbf{x})$ is such that there exist real-valued constants $\zeta_0>0$ and $\zeta_1>1$ with
$$4 c(\mathbf{x}) - (2+\zeta_1) |A^{-1/2}(\mathbf{x}) b(\mathbf{x})|^2 \ge 4 \zeta_0 >0
\qquad \mbox{for all} \enspace \mathbf{x} \in \mathcal{D}.$$
\end{itemize}
We note that assumptions (A1)-(A4) are trivially fulfilled for the Gross-Pitevskii equation (\ref{weak-formulation-gpe}). In practice, $A$ is typically just a constant, whereas $b$ describes the angular momentum rotation like in \eqref{weak-formulation-gpe}. The term $c$ describes any kind of real-valued non-negative smooth potential such as harmonic potentials of the structure $c(\mathbf{x})=\gamma_x^2 x^2 + \gamma_y^2 y^2 + \gamma_z^2 z^2$ with scaled trapping frequencies $\gamma_x,\gamma_y,\gamma_z\in \R$. The coefficient $\kappa$ can be used to model arrays of quantum wells for investigating Josephson oscillations (see \cite{WWW98,ZSL98}) or any other type of rough potential. Furthermore, $\kappa$ can be also used to describe imaginary potentials, see for instance the complex double-well potential in \cite{FDH14} or applications with phenomenological damping.

Observe that (A4) implies that the operator $L$ is self-adjoint.
Assumption (A5) is an additional (often crucial) physical constraint, which says that
the rotational speed $\Omega$ should be balanced by the trapping potential $V$ in the sense that
$V - 3/2 |\Omega|^2 \left( x^2 + y^2 \right) \ge \zeta_0 > 0$ on ${\mathcal{D}}$. The physical interpretation is that the trapping potential should be stronger than the arising centrifugal forces. Otherwise particles can escape from the trap and the Bose-Einstein condensate is destroyed (hence there exist no physically meaningful solutions). As we will see later, the differential operator $L$ is elliptic, but degenerates for the case $\zeta_0=0$ and $\zeta_1=1$, which just resembles the instability.

\begin{remark}
Observe that assumption (A5) allows to balance $c$ and $\kappa$ in a suitable way. For instance, if we only have $c \ge 0$ but $4 \Re(\kappa) - (2+\zeta_1) |A^{-1/2} b|^2 \ge 4 \zeta_0 >0$, we can define $\kappa^{\mbox{\tiny{\rm new}}}(\mathbf{x}):=\kappa(\mathbf{x}) - 4^{-1} (2+\zeta_1) |A^{-1/2}(\mathbf{x}) b(\mathbf{x})|^2 - \zeta_0$ and accordingly
$c^{\mbox{\tiny{\rm new}}}(\mathbf{x}):=c(\mathbf{x}) + \zeta_0 + 4^{-1} (2+\zeta_1) |A^{-1/2}(\mathbf{x}) b(\mathbf{x})|^2$, which again suit our assumptions above. Also note that we can hide any imaginary part of $c$ in $\kappa$ (which is allowed to be imaginary without constraints).
\end{remark}

\begin{remark}[Existence and uniqueness]
In the case $A(\mathbf{x})=1$, $b(\mathbf{x})=0$, $c(\mathbf{x})=0$, $\kappa \in L^{\infty}(\mathcal{D},\R)$ and $u_0 \in H^1_0(\mathcal{D})$, equation \eqref{model-problem} admits at least one solution for any time $T>0$. The corresponding results can be e.g. found in \cite[Theorem 3.4.1, Corollary 3.4.2]{Caz03}.
If $d=2$ the solution is also unique (cf. \cite[Corollary 4.3.3 and Remark 3.6.4]{Caz03}).
Even though we are not aware of an explicit result that guarantees existence of a solution to problem \eqref{model-problem} under the more general assumptions (A1)-(A5), it 
appears 
straightforward 
by exploiting Galerkin's method and compactness results via energy conservation (cf. \cite{Caz03} or \cite[Chapter 7.1 and 7.2]{Eva10}).
\end{remark}

\section{Discretization and main result}
\label{section-disc-and-main-result}

In this section we propose a space-time discretization of problem (\ref{model-problem}) and we state corresponding a priori error estimates in $L^{\infty}(L^2)$ and $L^{\infty}(H^1)$.

\subsection{Space discretization}
\label{subsection-space-disc}

In the following, we denote by $\T_h$ a conforming family of partitions of ${\mathcal{D}}\subset \R^d$ that consists of simplicial elements and which are shape regular, i.e. there exists an $h$-independent shape regularity parameter $\rho>0$ such that (for all $\T_h$) it holds
\begin{align}
\label{shape-regularity}
\mbox{\rm diam}(B_K) \ge \rho \hspace{2pt} \mbox{\rm diam}(K)
\end{align}
for all $K \in \T_h$, where $B_K$ denotes the largest ball contained in $K$. The diameter of an element $K \in \T_h$ is denoted by $h_K$; the maximum diameter by $\hmax:=\max_{K\in\T_H} h_K$ and the minimum diameter by $\hmin:=\min_{K\in\T_H} h_K$. Finally, by $h : \mathcal{D} \rightarrow \R_{>0}$ we denote the corresponding mesh function with $h(x):=h_K$ if $x \in K$. For brevity, we subsequently write $\| h v \|_{H^k({\mathcal{D}})}$ for some $v \in H^k({\mathcal{D}})$ to abbreviate $\left( \sum_{K \in \T_h} h_K^2 \| v \|_{H^k(K)}^2 \right)^{1/2}$.
The considered P1 Lagrange finite element space $S_h \subset H^1_0({\mathcal{D}})$ is given by
\begin{align}
S_h := \{v \in H^1_0({\mathcal{D}}) \;\vert \;\forall K\in\T_h,v\vert_K \text{ is a complex-valued polynomial of total degree}\leq 1\}.
\end{align}
By $\{ \lambda_1, \ldots, \lambda_{N_h} \}$ we denote an ordered (Lagrange) basis of $S_h$.
In particular, we denote by $N_h=$dim$(S_h)$ the number of degrees of freedom in $S_h$
(which is twice the number of interior nodes in $\T_h$).
On $S_h$, we introduce the corresponding $L^2$-projection and the Ritz-projection associated with $L$.

\begin{definition}[$L^2$-projection]
\label{def-ritz-projection}The $L^2$-projection $P_{L^2} : H^1_0({\mathcal{D}}) \rightarrow S_h$ is given by
\begin{align*}
\mbox{for } v \in H^1_0({\mathcal{D}}): \hspace{25pt} \Ltwo{ P_{L^2}(v) }{  w_h } = \Ltwo{ v }{  w_h }  \qquad \mbox{for all } w_h \in S_h.
\end{align*}
\end{definition}

\begin{definition}[Ritz projection]
For $v \in H^1_0({\mathcal{D}})$ the Ritz-projection $P_h(v) \in S_h$ associated with $L$ is given as the unique solution to the problem
\begin{align}
\label{definition-ritz-projection}\langle L(v - P_h(v)),w_h \rangle_{H^{-1}({\mathcal{D}}),H^1({\mathcal{D}})} =0 \qquad \mbox{for all } w_h\in S_h.
\end{align}
Existence and uniqueness of $P_h(v)$ follow from Conclusion \ref{ellipticity-of-L} below.
\end{definition}

In order to derive the final a priori error estimates, we require further assumptions on the grid $\T_h$, which will be posed indirectly in the following way exploiting the projections.

\begin{enumerate}
\item[(A6)] We assume that the $L^2$-projection is $H^1$-stable, i.e. there exists a
$h$-independent constant $C_{L^2}$
such that
\begin{align}
\label{H1-stability-L2-projection}
\| P_{L^2}(v) \|_{H^1({\mathcal{D}})} \le C_{L^2} \| v \|_{H^1({\mathcal{D}})} \qquad \mbox{for all } v\in H^1_0({\mathcal{D}}).
\end{align}
\item[(A7)] For $\mu>d$, we assume that the Ritz projection given by (\ref{definition-ritz-projection}) is $W^{1,\infty}$-stable for functions in $W^{2,\mu}({\mathcal{D}})$, i.e. there exists a $h$-independent constant $C_{W^{1,\infty}}$ such that
\begin{align}
\label{W-1-infty-stability}\| \nabla P_h(w) \|_{L^{\infty}({\mathcal{D}})} \le C_{W^{1,\infty}} \| \nabla w \|_{L^{\infty}({\mathcal{D}})}
\end{align}
for all $w \in H^1_0({\mathcal{D}}) \cap W^{2,\mu}({\mathcal{D}})$. Note that since ${\mathcal{D}}$ is a convex domain, we have the embedding $W^{2,\mu}({\mathcal{D}}) \hookrightarrow W^{1,\infty}({\mathcal{D}})$.
\end{enumerate}

Both assumptions (A6) and (A7) can be fulfilled by making suitable assumptions on $\T_h$. In this paper we directly assume
stability of the projections to avoid complicated mesh assumptions. Concerning (A6), recent results on the $H^1$-stability of $P_{L^2}$ on adaptively refined grids can be found in \cite{BaY14,KPP13,GHS14}. Concerning (A7), we refer to \cite[Theorem 8.1.11]{BrS08} where the result is established for quasi-uniform meshes. For results on graded meshes we refer to \cite{GLR09,DLS12}. We note that, the results on graded (locally quasi-uniform) meshes are only proved for the Laplacian operator, i.e. $L=-\triangle$, and its generalization to general elliptic operators is still open. However, it seems to be crucial that the operator $L$ is sufficiently smooth for (A7) to hold on graded meshes, this is why it might be important that $\kappa$ is not included in $L$ (in this context, see also the H\"older-estimates for the Green's functions proved in \cite{GLR09} and the necessary regularity assumptions made in \cite{MaR91}).

\subsection{Time discretization, method and main result}

In this paper we assume that the time interval $[0,T]$ is divided into $0=:t_0<t_1< \cdots < t_N := T$. Accordingly we define the $n$'th time interval by $I_n := (t_{n-1},t_{n}]$, the $n$'th time step by $\deltat{n} := t_{n} - t_{n-1}$ and step size function $\deltat{}\in L^{\infty}(0,T)$ by $\tau_{\vert I_n}:=\deltat{n}$. For simplicity we subsequently only write $\langle \cdot , \cdot \rangle:=\langle \cdot , \cdot \rangle_{H^{-1}({\mathcal{D}}),H^1({\mathcal{D}})}$ for the dual pairing on $H^1({\mathcal{D}})$. We consider the following one-stage Gauss-Legendre implicit Runge-Kutta scheme (which is of Crank-Nicolson-type). The scheme is mass conservative provided that $\Im(\kappa)=0$.

\begin{definition}[IRK Method for GPE]
\label{crank-nic-gpe}
Let $u_{h}^{0}:=\mathcal{I}_h(u_0) \in S_h$ be the Lagrange interpolation of $u_0$. For $n \ge 1$, we seek the approximation $u_{h}^{n} \in S_h$ with
\begin{eqnarray}
\label{cnd-problem}\Ltwo{ u_{h}^{n}}{v_h} +
\deltat{n} \hspace{2pt}\ci \hspace{2pt}
\langle L(u_h^{n-\frac{1}{2}}), v_h \rangle
 + \deltat{n} \hspace{2pt}\ci \hspace{2pt} \Ltwo{(\kappa + \beta |u_h^{n-\frac{1}{2}}|^2) u_h^{n-\frac{1}{2}}}{v_h}
= \Ltwo{ u_{h}^{n-1}}{v_h}
\end{eqnarray}
for all $v_h \in S_h$ and where $u_h^{n-\frac{1}{2}}:=(u_{h}^{n}+u_{h}^{n-1})/2$.
\end{definition}

For alternative time-discretizations based on operator splitting for nonlinear Schr\"odinger equations with a cubic nonlinearity we refer to \cite{Lub08,Gau11}, for the case without rotation, and to \cite{ABB13c}, for the case with rotation. More general approaches are discussed in \cite{HLW03}.

We note that the IRK scheme given by \eqref{cnd-problem} is mass conservative if $\Im(\kappa)=0$. The mass conservation, i.e. $\| u_{h}^{n} \|_{L^2(\mathcal{D})}=\| u_{h}^{0} \|_{L^2(\mathcal{D})}$ for all $n\ge 0$, is immediately seen by testing with $u_h^{n-\frac{1}{2}}$ in \eqref{cnd-problem} and taking the real part. Note that the conservation property implies that the scheme is unconditionally $L^2$-stable.

The following proposition follows from Lemma \ref{lemma-exist-uniq-disc-sols} and the proof of Theorem \ref{final-a-priori-error-estimate} below.
\begin{proposition}[Existence and uniqueness]
\label{prop-exist-and-unique}
If (A1)-(A5) are fulfilled and if $h$ and $\deltat{n}$ are small enough and such that $\ell_h (\hmax+\deltat{n}^2) \rightarrow 0$ for $h,\deltat{n}\rightarrow 0$, then there exists a solution $u_h^n$ of \eqref{cnd-problem}.
If $\Im(\kappa)=0$ and if $\deltat{n}$ is sufficiently small compared to $h$ (in the sense of Lemma \ref{lemma-exist-uniq-disc-sols} below), then the solution is also unique.
\end{proposition}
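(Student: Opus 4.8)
The plan is to separate the two claims and to treat existence as the substantially harder one. The difficulty is that the cubic term $\beta|u_h^{n-\frac{1}{2}}|^2 u_h^{n-\frac{1}{2}}$ is only locally Lipschitz, so a direct fixed point argument for \eqref{cnd-problem} is blocked by the absence of an a priori $L^{\infty}$-bound on the unknown $u_h^n$ (for general $\kappa$ we do not even have mass conservation to fall back on). I would therefore follow the route announced in the outline: introduce a regularized discrete problem in which the nonlinearity is truncated to a globally bounded, globally Lipschitz map, establish unconditional well-posedness for this auxiliary problem, and then use the a priori error analysis to show that, under the smallness condition $\ell_h(\hmax+\deltat{n}^2)\to 0$, any such regularized solution stays in the regime where the truncation is inactive and hence solves \eqref{cnd-problem} itself.

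Concretely, for a fixed truncation radius I would phrase the regularized equation as a zero-finding problem $G_h(u_h^n)=0$ for a continuous map $G_h:S_h\to S_h$ built from \eqref{cnd-problem} with the truncated nonlinearity. Since $S_h$ is finite dimensional and $L$ is elliptic (Conclusion \ref{ellipticity-of-L}), testing against $v_h$ and taking the real part gives, for $v_h$ on a sphere of sufficiently large radius, $\Re\,\Ltwo{G_h(v_h)}{v_h}>0$: the self-adjointness of $L$ renders its leading contribution purely imaginary, the truncated nonlinearity is globally bounded, and the remaining linear cross terms are lower order, so the quadratic mass term $\|v_h\|_{L^2(\mathcal{D})}^2$ dominates. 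The standard corollary of Brouwer's theorem then produces a zero inside the ball, i.e. a solution of the regularized scheme. (When $\Im(\kappa)=0$ the cleaner a priori bound is mass conservation, obtained by testing \eqref{cnd-problem} with $u_h^{n-\frac{1}{2}}$ and taking real parts, which also yields unconditional $L^2$-stability.) This well-posedness, together with the statement that solutions of the regularized problem coincide with those of \eqref{cnd-problem} under the stated conditions, is the content of Lemma \ref{lemma-exist-uniq-disc-sols}.

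The crucial step is to remove the truncation, and this is where the smallness condition enters. Applying the a priori estimate of Theorem \ref{final-a-priori-error-estimate} to the regularized solution bounds its $H^1$-distance to the Ritz projection $P_h$ of the exact solution at the relevant time level by the optimal rate $C(\hmax+\deltat{n}^2)$. I would then split $\|u_h^{n-\frac{1}{2}}\|_{L^{\infty}(\mathcal{D})}\le \|u_h^{n-\frac{1}{2}}-P_h u\|_{L^{\infty}(\mathcal{D})}+\|P_h u\|_{L^{\infty}(\mathcal{D})}$, controlling the second term by $\|u\|_{L^{\infty}(0,T;W^{1,\infty}(\mathcal{D}))}$ via the $W^{1,\infty}$-stability (A7), and the first term by a discrete Sobolev inequality $\|v_h\|_{L^{\infty}(\mathcal{D})}\le C\,\ell_h\,\|v_h\|_{H^1(\mathcal{D})}$ — the origin of the factor $\ell_h$ — so that it is bounded by $C\,\ell_h(\hmax+\deltat{n}^2)$. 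Under the assumption $\ell_h(\hmax+\deltat{n}^2)\to 0$ this perturbation is negligible, so the discrete iterate stays below any truncation threshold fixed above $\sup_t\|u(\cdot,t)\|_{L^{\infty}(\mathcal{D})}$; hence the truncation is inactive and the regularized solution solves \eqref{cnd-problem}. The phrase ``the proof of Theorem \ref{final-a-priori-error-estimate}'' reflects that existence and the error bound are produced together, by induction over the time levels $n$: the $L^{\infty}$-bound needed to advance from $t_{n-1}$ to $t_n$ is exactly what the error estimate at level $n$ delivers.

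For uniqueness I would assume $\Im(\kappa)=0$ and let $u_h^n,\tilde u_h^n\in S_h$ be two solutions of \eqref{cnd-problem} for the same $u_h^{n-1}$. Writing $e_h:=u_h^n-\tilde u_h^n$ and the two midpoints $m_h,\tilde m_h$ (whose difference is $e_h/2$), I subtract the equations, test with $e_h$ and take the real part; self-adjointness of $L$ and the reality of $\kappa$ make the linear contributions purely imaginary, leaving $\|e_h\|_{L^2(\mathcal{D})}^2$ balanced against the difference of the cubic terms. Estimating the latter pointwise by $C(|m_h|^2+|\tilde m_h|^2)|e_h|$, bounding $\|m_h\|_{L^2(\mathcal{D})},\|\tilde m_h\|_{L^2(\mathcal{D})}$ by mass conservation, and using the inverse inequality $\|v_h\|_{L^{\infty}(\mathcal{D})}\le C h^{-d/2}\|v_h\|_{L^2(\mathcal{D})}$ yields a bound of the form $\|e_h\|_{L^2(\mathcal{D})}^2\le C\,\deltat{n}\,h^{-d}\,\|e_h\|_{L^2(\mathcal{D})}^2$; when $\deltat{n}$ is sufficiently small compared to $h$ the prefactor is $<1$, forcing $e_h=0$. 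This is the content of Lemma \ref{lemma-exist-uniq-disc-sols}. I expect the main obstacle throughout to be the simultaneous control of the cubic nonlinearity and the discrete $L^{\infty}$-norm: without conservation laws available for general $\kappa$, the only handle on the nonlinearity is through inverse estimates, and it is precisely the balance between these inverse factors and the convergence rate supplied by Theorem \ref{final-a-priori-error-estimate} that is encoded in the condition $\ell_h(\hmax+\deltat{n}^2)\to 0$.
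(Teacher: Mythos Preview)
Your proposal is correct and follows essentially the same route as the paper: regularize the cubic nonlinearity by a truncated function $f_M$, obtain existence for the auxiliary scheme via a Brouwer-type argument (Lemma~\ref{brouwer-lemma}), then use the energy error estimate together with the discrete Sobolev inequality of Lemma~\ref{lemma-thomee} to show $\|U_h^n\|_{L^\infty(\mathcal{D})}\le M$ under the condition $\ell_h(\hmax+\deltat{n}^2)\to 0$, so that the truncation is inactive (Conclusion~\ref{existence-of-M}); uniqueness is handled exactly as you describe, via mass conservation and the inverse estimate $\|v_h\|_{L^\infty}\lesssim \hmin^{-d/2}\|v_h\|_{L^2}$. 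The only notable deviation is in the Brouwer step: you take a globally \emph{bounded} truncation and let the mass term $\|v_h\|_{L^2}^2$ dominate, whereas the paper keeps $f_M$ merely linearly bounded but with the sign property $\langle f_M(z),z\rangle\ge 0$, multiplies the equation by $-\ci\,\deltat{n}^{-1}$ so that the $L$-contribution becomes the coercive real term $\tfrac12\|\alpha\|_{E(\mathcal{D})}^2$, and lets \emph{that} dominate---both variants work and neither buys anything the other does not.
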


The main result of the work is the following a priori error estimate, which we prove in Section \ref{final-apriori-proofs}. Recall assumptions (A1)-(A5) from Section \ref{section-mod-prob-and-prelim} and (A6)-(A7) from Section \ref{subsection-space-disc}.

\begin{theorem}[Error estimates for the IRK discretization]
\label{final-a-priori-error-estimate}
Let assumptions (A1)-(A7) be fulfilled, let $u\in W^{2,\infty}(0,T;H^3({\mathcal{D}}))$
denote a solution of (\ref{model-problem}), and let $h$ and $\deltat{n}$ be such that $\ell_h (\hmax+\deltat{n}^2) \rightarrow 0$ for $h,\deltat{n}\rightarrow 0$ and where
\begin{align*}
\ell_h :=
\begin{cases}
 |\ln{\hmin}|^{1/2} \quad &\mbox{for } d=2 \\
 \hspace{3pt}|{\hmin}|^{-1/2} \hspace{3pt} \quad &\mbox{for } d=3.
\end{cases}
\end{align*}
Then, if $h$ and $\deltat{n}$ are small enough, there exist generic constants $C=C(u)$ that are independent of $h$, $\deltat{n}$ and $T$ such that for a solution $u_h^N$ of \eqref{crank-nic-gpe}
and for $m\in \{0,1\}$ it holds
\begin{eqnarray*}
\lefteqn{\| u(\cdot,T) - u_h^N \|_{H^m({\mathcal{D}})}
\le C  | h^{2-m} u(\cdot,T) |_{H^2({\mathcal{D}})} + C e^{CT} \left( | h^{2-m} u_0 |_{H^2({\mathcal{D}})}
+  \| h^{2-m} \partial_t u \|_{L^2(0,T,H^2({\mathcal{D}}))} \right)} \\
&\enspace& +
C e^{T}  \left( \sum_{k=1}^n
  \deltat{k}
 \left( \| h^{2-m} u \|_{L^{\infty}{(I_k,H^2({\mathcal{D}}))}}^2 
 +
 \| \deltat{}^2 \partial_{tt} u \|_{L^{\infty}(I_k,H^{2+m}({\mathcal{D}}))}^2
 + \| \deltat{}^2 u  \|_{W^{2,\infty}(I_k,H^{m}(\mathcal{D}))}^2 
 \right)
 \right)^{1/2}.
\hspace{200pt}
\end{eqnarray*}
\end{theorem}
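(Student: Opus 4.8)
The plan is to follow the standard route for nonlinear Schr\"odinger problems: split the error through the Ritz projection, derive an exact error identity for the discrete part, and close a bootstrap argument with a discrete Gr\"onwall inequality. Writing $u^n:=u(\cdot,t_n)$, I decompose
\[
u^n-u_h^n=\rho^n+\theta^n,\qquad \rho^n:=u^n-P_h u^n,\quad \theta^n:=P_h u^n-u_h^n\in S_h,
\]
with $P_h$ the Ritz projection from \eqref{definition-ritz-projection}. The projection error $\rho^n$ (and the projection error of $\partial_t u$) is handled purely by finite-element approximation theory together with (A7); since $u\in W^{2,\infty}(0,T;H^3({\mathcal{D}}))$ this yields $\|\rho^n\|_{H^m({\mathcal{D}})}\lesssim |h^{2-m}u^n|_{H^2({\mathcal{D}})}$ and analogous bounds, producing the first groups of terms in the statement. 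It then remains to estimate the finite-dimensional quantity $\theta^n$ in $H^m$, and by the triangle inequality $\|u^n-u_h^n\|_{H^m}\le\|\rho^n\|_{H^m}+\|\theta^n\|_{H^m}$ the proof reduces to controlling $\theta^n$.

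Next I would derive the error identity. Evaluating \eqref{model-problem} at the intermediate time, testing with $v_h\in S_h$, and subtracting the scheme \eqref{cnd-problem}, the Ritz-orthogonality \eqref{definition-ritz-projection} annihilates the principal part of $\rho$ and leaves
\[
\Ltwo{\theta^n-\theta^{n-1}}{v_h}+\deltat{n}\,\ci\,\langle L(\theta^{n-\frac12}),v_h\rangle=\deltat{n}\,\ci\,\Ltwo{\mathcal{N}^n+\mathcal{T}^n}{v_h},
\]
where $\theta^{n-\frac12}:=(\theta^n+\theta^{n-1})/2$. Here $\mathcal{T}^n$ collects the consistency defects and $\mathcal{N}^n$ the nonlinear defect. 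The consistency part has two origins: the midpoint (Crank--Nicolson) time error, controlled by Taylor expansion through $\|\deltat{}^2\partial_{tt}u\|$ and $\|\deltat{}^2 u\|_{W^{2,\infty}}$, and the mismatch between $L\bigl((u^n+u^{n-1})/2\bigr)$ and $L(u^{n-\frac12})$ together with the projected $\partial_t u$; after summation these reproduce the third group of terms of the theorem.

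For the energy estimate I would test with $v_h=\theta^{n-\frac12}$ and take the real part. By self-adjointness of $L$ (from (A4)) one has $\Re\bigl(\ci\langle L(\theta^{n-\frac12}),\theta^{n-\frac12}\rangle\bigr)=0$, so the left-hand side telescopes to $\|\theta^n\|_{L^2}^2-\|\theta^{n-1}\|_{L^2}^2$. The delicate term on the right is the nonlinear defect: using $\bigl||a|^2a-|b|^2b\bigr|\le C(|a|^2+|b|^2)|a-b|$ it is bounded by a factor $\|u_h^{n-\frac12}\|_{L^\infty}^2+\dots$ times $\|\rho^{n-\frac12}\|+\|\theta^{n-\frac12}\|$, so a uniform $L^\infty$-bound on the discrete solution is needed. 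I would split $\|u_h^{n-\frac12}\|_{L^\infty}\le\|P_h u^{n-\frac12}\|_{L^\infty}+\|\theta^{n-\frac12}\|_{L^\infty}$, bound the first term by (A7) and Sobolev embedding, and control the second by the inverse inequality $\|\theta^{n-\frac12}\|_{L^\infty}\le C\ell_h\|\theta^{n-\frac12}\|_{H^1}$, which is exactly where the factor $\ell_h$ enters (with $|\ln\hmin|^{1/2}$ for $d=2$ and $\hmin^{-1/2}$ for $d=3$). The coercivity of $L$ from Conclusion~\ref{ellipticity-of-L}, which survives the first-order rotation term thanks to (A5), then lets me absorb the $H^1$-norm of $\theta^{n-\frac12}$; the $m=1$ estimate is obtained by instead testing with the discrete time derivative of $\theta$ to produce the energy norm directly.

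The argument is closed by induction over $n$. Under the inductive hypothesis that $\ell_h\|\theta^k\|_{H^1}$ stays bounded for $k<n$ (so $u_h^k$ enjoys a uniform $L^\infty$-bound and, via the regularized auxiliary problem, coincides with the genuine scheme and exists), the above yields a discrete Gr\"onwall inequality whose solution is the consistency sum multiplied by $e^{CT}$. The main obstacle---and the crux of the whole proof---is that this $L^\infty$-bound is only available through the inverse inequality applied to $\theta$, which makes the nonlinear estimate circular; the condition $\ell_h(\hmax+\deltat{n}^2)\to 0$ is precisely what forces the resulting bound on $\|\theta^n\|_{H^1}$ to be small enough to re-establish the induction hypothesis, thereby breaking the circularity. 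Tracking the sharp power of $\ell_h$ in this bootstrap is the subtle point and is exactly what relaxes the classical constraint $\deltat{}=\mathcal{O}(h^{d/4})$ to the one stated in Theorem~\ref{final-a-priori-error-estimate}.
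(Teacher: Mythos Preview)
Your overall architecture---split through the Ritz projection, estimate the discrete part via an error identity and Gr\"onwall, and close a bootstrap for the $L^\infty$-bound---matches the paper. Two points, however, separate your sketch from a proof.

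\medskip
\textbf{The $H^1$ estimate.} Testing with the discrete time derivative $\theta^n-\theta^{n-1}$ does not close for a Schr\"odinger equation. Taking the imaginary part indeed produces the telescoping $\energy{\theta^n}^2-\energy{\theta^{n-1}}^2$, but the right-hand side becomes $\Re\Ltwo{\text{defects}}{\theta^n-\theta^{n-1}}$, and you have no independent control of $\|\theta^n-\theta^{n-1}\|$ (the parabolic mechanism, where the real part supplies $\|\theta^n-\theta^{n-1}\|_{L^2}^2$ and lets you absorb, is absent here because that same real part re-introduces $\Im\langle L(\theta^{n-1/2}),\theta^n-\theta^{n-1}\rangle$, i.e.\ an $H^1$-norm of the increment). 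The paper bypasses this by testing with the discrete $L^2$-Riesz representer $G_h^n\in S_h$ of $\langle L(\theta^{n}+\theta^{n-1}),\cdot\rangle$: then $\Ltwo{\theta^n-\theta^{n-1}}{G_h^n}=\langle L(\theta^n+\theta^{n-1}),\theta^n-\theta^{n-1}\rangle$ telescopes in energy, the $L$-term becomes $\ci\|G_h^n\|_{L^2}^2$ and vanishes under $\Re$, and every defect term turns into $\langle L(\theta^n+\theta^{n-1}),P_{L^2}(\text{defect})\rangle$, bounded by $\energy{\theta^n+\theta^{n-1}}\cdot\energy{P_{L^2}(\text{defect})}$. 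This is exactly where assumption (A6) on the $H^1$-stability of $P_{L^2}$ enters, and why the time-consistency defect must be measured in $H^1$ (forcing the $H^3$-regularity of $\partial_{tt}u$). Your outline never invokes (A6), which is a symptom of the gap.

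A related confusion: your sentence about using the coercivity of $L$ ``to absorb the $H^1$-norm of $\theta^{n-1/2}$'' has no counterpart in the actual $L^2$ argument. After testing with $\theta^{n-1/2}$ and taking real parts, the $L$-term vanishes identically; there is nothing to absorb and coercivity is not used there.

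\medskip
\textbf{Bootstrap versus truncation.} Your plan treats the $L^\infty$-bound by induction on $n$ with the raw nonlinearity. The paper instead introduces the truncated nonlinearity $f_M$ of Lemma~\ref{makridakis-lemma} \emph{from the outset}, proves all estimates for the globally Lipschitz auxiliary scheme (so no circularity arises at any step), and only afterwards shows $\|U_h^n\|_{L^\infty}\le M$ via Lemma~\ref{lemma-thomee}, whence $f_M(U^{n-1/2})=|U^{n-1/2}|^2U^{n-1/2}$ and $U_h^n=u_h^n$. Your inductive version can be made to work, but the energy-norm Lipschitz property \eqref{f_M_cond_4} (which requires $\|P_h u^n\|_{W^{1,\infty}}\le M$, hence (A7)) is exactly what the paper uses to estimate the nonlinear defect in $\energy{\cdot}$; without the truncation device you must reproduce this bound by hand at each inductive step, and you should say how.
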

We observe that the method yields optimal convergence rates, i.e. it is of quadratic order in space and time for the $L^2$-error and of linear order in space for the $H^1$-error. Details on the arising constants in Theorem \ref{final-a-priori-error-estimate} can be found in Lemma \ref{L2-estimate-full-E-h-n} and Lemma \ref{energy-estimate-full-E-h-n} below.

\begin{remark}
It is surprising that the $L^{\infty}(H^1)$-estimate in Theorem \ref{final-a-priori-error-estimate} requires the higher regularity $\partial_{tt} u(\cdot,t) \in H^3({\mathcal{D}})$. A similar observation has already been made by Karakashian and Makridakis \cite[Remark 4.3]{KaM98} for the simpler equation $\ci \partial_t u = - \triangle u + \beta |u|^2 u$. It should be investigated in the future if it is possible to weaken this regularity assumption.
\end{remark}

Finally, let us state the corresponding result that can be derived for the Backward-Euler Method. This result is rather for comparison, since the Backward-Euler is practically not desirable since it lacks both mass and energy conservation.

\begin{theorem}[Error estimates for a Backward-Euler discretization]
Assume (A1)-(A7), $u\in W^{1,2}(0,T;H^3({\mathcal{D}}))$ and $h$ and $\deltat{n}$ such that $\ell_h (\hmax+\deltat{n}) \rightarrow 0$ for $h,\deltat{n}\rightarrow 0$. Let further $u_{h}^{0}:=\mathcal{I}_h(u_0) \in S_h$. Then, for all small enough $h$ and $\deltat{n}$, there exists $u_{h}^{n} \in S_h$ with
\begin{eqnarray}
\label{bwe-problem}\Ltwo{ u_{h}^{n}}{v_h} +
\deltat{n} \hspace{2pt}\ci \hspace{2pt}
\langle L(u_h^n), v_h \rangle
 + \deltat{n} \hspace{2pt}\ci \hspace{2pt} \Ltwo{(\kappa + \beta |u_h^{n}|^2) u_h^{n}}{v_h}
= \Ltwo{ u_{h}^{n-1}}{v_h}
\end{eqnarray}
for all $v_h \in S_h$ and 
there exist generic constants $C=C(u)$ that are independent of $h$, $\deltat{n}$ and $T$ 
such that for $m\in \{0,1\}$ 
\begin{eqnarray*}
\lefteqn{\| u(\cdot,T) - u_h^N \|_{H^m({\mathcal{D}})}
\le C  | h^{2-m} u(\cdot,T) |_{H^2({\mathcal{D}})} + C e^{CT} \left( | h^{2-m} u_0 |_{H^2({\mathcal{D}})}
+ \| h^{2-m} \partial_t u \|_{L^2(0,T,H^2({\mathcal{D}}))} \right)}\\
&\enspace& +
C e^{C T} \left(  
\| h^{2-m} u \|_{L^2(0,T,H^2({\mathcal{D}}))} + \| \deltat{} \partial_t u(\cdot,t) \|_{L^2(0,T,H^{m+1}({\mathcal{D}}))} 
+ \sum_{k=1}^n 
 \deltat{k} \| h^{2-m} u(\cdot,t_k) \|_{H^2({\mathcal{D}})}
\right).
\end{eqnarray*}
\end{theorem}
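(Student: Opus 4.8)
Since \eqref{bwe-problem} is the backward-Euler analogue of \eqref{cnd-problem}, I would follow the architecture used for Theorem \ref{final-a-priori-error-estimate} in Section \ref{final-apriori-proofs}, reusing its ingredients and only tracking the two structural differences: the scheme is first order in time and is dissipative rather than conservative. I would split the nodal error through the Ritz projection, $u(\cdot,t_n)-u_h^n = \rho^n + \theta^n$ with $\rho^n := (I-P_h)u(\cdot,t_n)$ and $\theta^n := P_h u(\cdot,t_n) - u_h^n \in S_h$. The projection part $\rho^n$ and the discrete time derivative $\bar\partial_t\rho^n$ (with $\bar\partial_t w^n := (w^n-w^{n-1})/\deltat{n}$) are controlled solely by the approximation properties of $P_h$ implied by (A7) and the convexity of $\mathcal{D}$; these account for the terms $|h^{2-m}u(\cdot,T)|_{H^2}$, $|h^{2-m}u_0|_{H^2}$, $\|h^{2-m}\partial_t u\|_{L^2(0,T,H^2)}$ and $\|h^{2-m}u\|_{L^2(0,T,H^2)}$ in the statement. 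Existence of $u_h^n$ I would obtain exactly as in the IRK case: replace the cubic nonlinearity by a globally Lipschitz truncation, solve the resulting problem by a Brouwer fixed-point argument as in Lemma \ref{lemma-exist-uniq-disc-sols}, and verify a posteriori that the truncation is inactive once the discrete $L^\infty$-bound established below is available.

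Subtracting \eqref{bwe-problem}, tested against $v_h\in S_h$, from the weak form \eqref{model-problem} at $t=t_n$ and using the Ritz orthogonality \eqref{definition-ritz-projection} to eliminate $\langle L\rho^n,v_h\rangle$, I would obtain an error identity of the form
$$\Ltwo{\bar\partial_t\theta^n}{v_h} + \ci\langle L\theta^n, v_h\rangle + \ci\Ltwo{N(u_h^n)-N(u(\cdot,t_n))}{v_h} = \Ltwo{\omega^n}{v_h},$$
where $N(v):=(\kappa+\beta|v|^2)v$ and $\omega^n$ collects the backward-Euler time-truncation error $\partial_t u(\cdot,t_n)-\bar\partial_t u^n$ together with $-\bar\partial_t\rho^n$. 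Since the solution lies only in $W^{1,2}(0,T;H^3)$, I would estimate the time-truncation part using $\partial_t u$ alone, writing $\partial_t u(\cdot,t_n)-\bar\partial_t u^n$ as an average of $\partial_t u$ over $I_n$; this produces the first-order factors $\|\deltat{}\partial_t u\|_{L^2(0,T,H^{m+1})}$ and $\sum_k\deltat{k}\|h^{2-m}u(\cdot,t_k)\|_{H^2}$. For the $L^2$-estimate ($m=0$) I would test with $v_h=\theta^n$ and take the real part: $L$ is self-adjoint by (A4), so $\Re(\ci\langle L\theta^n,\theta^n\rangle)=0$ and the entire stiff part drops out, while $\Re\Ltwo{\theta^n-\theta^{n-1}}{\theta^n}\ge\tfrac12(\|\theta^n\|^2-\|\theta^{n-1}\|^2)$ gives the backward-Euler increment; only $\Re(\kappa)\ge0$ from (A5) and boundedness of $\Im(\kappa)$ are then needed for the $\kappa$-term. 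For the $H^1$-estimate ($m=1$) I would instead test with $\bar\partial_t\theta^n$ and use the self-adjointness and coercivity of $L$ (Conclusion \ref{ellipticity-of-L}) to extract the energy-norm increment $\energy{\theta^n}^2-\energy{\theta^{n-1}}^2$; here the consistency terms appear paired against a difference quotient, which is exactly where the higher spatial regularity $H^{m+1}$ (hence $H^3$ through the $\partial_t u$ term) enters.

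The crux, precisely as for the IRK scheme, is the cubic term. Using local Lipschitz continuity of $v\mapsto|v|^2v$ I would bound $\|N(u_h^n)-N(u(\cdot,t_n))\|_{L^2}\le C\big(\|u_h^n\|_{L^\infty},\|u(\cdot,t_n)\|_{L^\infty}\big)\big(\|\theta^n\|+\|\rho^n\|\big)$, which presupposes an a priori $L^\infty$-bound (resp. $W^{1,\infty}$-bound for $m=1$) on the iterate $u_h^n$. Establishing this bound is the main obstacle, and I would close it by induction on $n$: assuming $\theta^{k}$ is of optimal order for $k\le n-1$, the inverse inequality gives $\|\theta^{n-1}\|_{L^\infty}\lesssim\ell_h\|\theta^{n-1}\|$ (and its gradient version), while the $W^{1,\infty}$-stability of $P_h$ from (A7) controls $\|P_h u(\cdot,t_n)\|_{W^{1,\infty}}$; together with the hypothesis $\ell_h(\hmax+\deltat{n})\to0$ this keeps $u_h^n$ in a fixed $L^\infty$-ball, so the Lipschitz constant of $N$ stays uniform and the truncation introduced above is inactive, closing the induction. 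Finally I would absorb the $\|\theta^n\|$-contributions of the nonlinear and $\kappa$-terms, sum the per-step inequalities, and apply a discrete Gronwall lemma to produce the factor $e^{CT}$ with $C$ independent of $h,\deltat{n},T$; combined with the projection and consistency estimates this yields the stated bound. The only genuine difference from Theorem \ref{final-a-priori-error-estimate} is that the dissipativity of backward Euler makes the stability step more forgiving but loses one order in time, so the second-order time terms of the Crank--Nicolson estimate are replaced here by the first-order terms $\|\deltat{}\partial_t u\|$ and $\sum_k\deltat{k}\|h^{2-m}u(\cdot,t_k)\|_{H^2}$.
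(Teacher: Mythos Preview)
Your overall architecture matches what the paper intends by ``the same techniques as Theorem~\ref{final-a-priori-error-estimate}'': the Ritz-projection splitting, the truncated nonlinearity $f_M$ with Brouwer existence (Lemma~\ref{lemma-exist-uniq-disc-sols}), the inverse inequality of Lemma~\ref{lemma-thomee} combined with (A7) to propagate the $L^\infty$-bound, and the discrete Gronwall step are all exactly as in Section~\ref{final-apriori-proofs}. Your $L^2$-argument ($m=0$), testing with $\theta^n$ and taking the real part so that $\Re(\ci\langle L\theta^n,\theta^n\rangle)=0$, is correct.

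There is, however, a genuine gap in your energy step ($m=1$). Testing with $\bar\partial_t\theta^n$ is a parabolic device: for a heat-type equation the stiff term $\langle L\theta^n,\bar\partial_t\theta^n\rangle$ appears with a real coefficient and its real part delivers the energy increment. Here the coefficient is $\ci$. Taking the real part of the tested error equation yields $\Im\langle L\theta^n,\bar\partial_t\theta^n\rangle$, which has no sign; taking the imaginary part does recover $\Re\langle L\theta^n,\bar\partial_t\theta^n\rangle\ge\tfrac{1}{2\deltat{n}}(\energy{\theta^n}^2-\energy{\theta^{n-1}}^2)$, but then every remaining term is paired against $\bar\partial_t\theta^n$ with no surviving coercive contribution to control $\|\bar\partial_t\theta^n\|$, and the estimate does not close.

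The paper's device (see the derivation of \eqref{error-identity-enrgy}) is to test instead with the discrete $L^2$-Riesz representer $G_h^n\in S_h$ of $v_h\mapsto\langle L(\theta^n),v_h\rangle$, i.e.\ $\Ltwo{v_h}{G_h^n}=\langle L(\theta^n),v_h\rangle$ for all $v_h\in S_h$. Then $\Re\Ltwo{\theta^n-\theta^{n-1}}{G_h^n}=\Re(\theta^n,\theta^n-\theta^{n-1})_{E(\mathcal{D})}\ge\tfrac{1}{2}(\energy{\theta^n}^2-\energy{\theta^{n-1}}^2)$, the stiff term becomes $\deltat{n}\,\ci\,\|G_h^n\|_{L^2}^2$ with vanishing real part, and each consistency term $\Ltwo{\xi}{G_h^n}=\langle L(\theta^n),P_{L^2}(\xi)\rangle$ is bounded by $\energy{\theta^n}\,\energy{P_{L^2}(\xi)}$ via the $H^1$-stability (A6). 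This is also where the $H^3$-regularity actually enters: the backward-Euler time-truncation residual contains $\triangleL u$, and bounding $\energy{P_{L^2}(\triangleL u(\cdot,t)-\triangleL u^n)}$ forces $\triangleL u\in H^1$, hence $u\in H^3$---not the pairing against a difference quotient that you describe.
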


The proof of this theorem exploits the same techniques as the one of Theorem \ref{final-a-priori-error-estimate}, which is why we will not present it here.

\section{Reformulation of the continuous problem}
\label{section-aux-results-analytical}

In this section, we establish some auxiliary results and preliminaries concerning the model problem (\ref{model-problem}). In particular, we introduce a suitable scalar product on $H^1({\mathcal{D}})$ which can be associated with the operator $L$ and which is more convenient for the analysis in the following sections.

If clear from the context, we subsequently leave out the integration variable in our integrals, for instance we write $\int_{\mathcal{D}} v$ for $\int_{\mathcal{D}} v(\mathbf{x}) \hspace{2pt} d\mathbf{x}$.
In order to analyze problem (\ref{model-problem}) properly, we require some additional definitions and auxiliary results.

\begin{definition}
For any subdomain $\omega \subset {\mathcal{D}}$
we define the sesquilinear form $(\cdot,\cdot)_{E(\omega)}$ by
\begin{eqnarray*}
\lefteqn{(v,w)_{E(\omega)} := \int_{\omega} \left( A^{1/2} \nabla v - \ci 2^{-1} A^{-1/2}
b
v \right) \cdot \overline{\left( A^{1/2} \nabla w - \ci 2^{-1} A^{-1/2}
b
w \right)} }\\
&+&  \int_{\omega} (
c
- (1/4) |A^{-1/2}
b
|^2) v \overline{w} \hspace{180pt}
\end{eqnarray*}
for $v,w \in H^1(\omega)$.
Note that $c - (1/4) |A^{-1/2} b|^2$ is positive by (A5).
Accordingly, we define the norm $\| \cdot \|_{E(\omega)}$ by $\| v \|_{E(\omega)}:=\sqrt{(v,v)_{E(\omega)}}$.
\end{definition}

\begin{lemma}\label{norm-equivalence}
Let $\omega \subset \Omega$ be a subdomain. Under assumptions (A1)-(A5), the sesquilinear form $(\cdot,\cdot)_{E(\omega)}$ is a scalar product on $H^1(\omega)$ and the induced norm $\| v \|_{E(\omega)}$ is equivalent to the standard $H^1$-norm $\| \cdot \|_{H^1(\omega)}$. In particular we have for all $v \in H^1(\omega)$
\begin{align*}
\| v \|_{E(\omega)}^2 \ge (1-\zeta_1^{-1}) \| A^{1/2} \nabla v \|_{L^2(\omega)}^2 + \zeta_0  \| v \|_{L^2(\omega)}^2.
\end{align*}
\end{lemma}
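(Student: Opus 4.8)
The plan is to reduce everything to the quantitative lower bound stated in the lemma, since that single estimate simultaneously yields positive-definiteness (hence that $(\cdot,\cdot)_{E(\omega)}$ is a genuine scalar product) and the nontrivial (coercivity) half of the norm equivalence. First I would record that $(\cdot,\cdot)_{E(\omega)}$ is sesquilinear by construction and Hermitian: since $A$, $b$, $c$ are real-valued by (A2)--(A4), the matrices $A^{1/2}, A^{-1/2}$ are real and the weight $c-\tfrac14|A^{-1/2}b|^2$ is real, so interchanging $v$ and $w$ and conjugating leaves the form invariant. It then remains only to control $\|v\|_{E(\omega)}^2=(v,v)_{E(\omega)}$ from below.

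The computational heart is to expand the diagonal of the form. Writing $P:=A^{1/2}\nabla v$ and $Q:=A^{-1/2}b$ (a real vector field), I would expand the modulus-squared of $P-\tfrac{\ci}{2}Qv$ and observe the crucial cancellation: the term $\tfrac14|Q|^2|v|^2$ produced by the square exactly cancels the $-\tfrac14|A^{-1/2}b|^2|v|^2$ carried by the potential weight. Using that $A^{1/2}$ is symmetric one identifies $P\cdot Q=b\cdot\nabla v$, so that
\begin{align*}
\|v\|_{E(\omega)}^2 = \int_\omega |A^{1/2}\nabla v|^2 + \int_\omega \Re\!\big(\ci\,\overline{v}\,b\cdot\nabla v\big) + \int_\omega c\,|v|^2 .
\end{align*}
The indefinite cross term is the only obstacle. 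I would bound it by Cauchy--Schwarz in the form $|b\cdot\nabla v|=|Q\cdot P|\le |A^{-1/2}b|\,|A^{1/2}\nabla v|$ followed by Young's inequality with the precisely tuned parameter $\eps=2/\zeta_1$, which produces the lower bound $-\zeta_1^{-1}|A^{1/2}\nabla v|^2-\tfrac{\zeta_1}{4}|A^{-1/2}b|^2|v|^2$ for the integrand.

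Substituting this leaves the gradient coefficient $1-\zeta_1^{-1}$ (positive because $\zeta_1>1$) and the potential coefficient $c-\tfrac{\zeta_1}{4}|A^{-1/2}b|^2$; rewriting the latter as $\big(c-\tfrac{2+\zeta_1}{4}|A^{-1/2}b|^2\big)+\tfrac12|A^{-1/2}b|^2$ and invoking (A5) gives exactly $\ge\zeta_0$, which is the claimed inequality. To close the equivalence I would then use the spectral bound (A3), $|A^{1/2}\nabla v|^2\ge\gmin|\nabla v|^2$, to turn the lower bound into $\|v\|_{E(\omega)}^2\ge c_0\|v\|_{H^1(\omega)}^2$ with $c_0=\min\{(1-\zeta_1^{-1})\gmin,\zeta_0\}>0$, which in particular forces $\|v\|_{E(\omega)}=0\Rightarrow v=0$ and hence positive-definiteness. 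The reverse inequality $\|v\|_{E(\omega)}\le C\|v\|_{H^1(\omega)}$ is routine: expanding the square and using the uniform boundedness of $A$, $b$, $c$ from (A2)--(A3) with the triangle inequality bounds each term by $\|v\|_{H^1(\omega)}^2$. The only genuinely delicate point is the tuning of the Young constant to $2/\zeta_1$ and the bookkeeping with (A5) that makes the two target constants emerge exactly as stated; everything else is bounded-coefficient estimation.
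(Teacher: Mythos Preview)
Your proof is correct and follows essentially the same route as the paper: both arguments reduce to applying Young's inequality with parameter tuned to produce the gradient coefficient $1-\zeta_1^{-1}$, then invoke (A5) for the zeroth-order term, with the upper bound and Hermitian symmetry being routine. The only cosmetic difference is the order of operations: the paper applies Young's inequality directly inside the square $|A^{1/2}\nabla v - \tfrac{\ci}{2}A^{-1/2}bv|^2$ before adding the potential term (so the two $\tfrac14|A^{-1/2}b|^2|v|^2$ contributions never visibly cancel and the resulting potential coefficient is $c-\tfrac{2+\zeta_1}{4}|A^{-1/2}b|^2$, matching (A5) on the nose), whereas you first expand fully, observe the cancellation, and then bound the single cross term, arriving at the slightly stronger intermediate coefficient $c-\tfrac{\zeta_1}{4}|A^{-1/2}b|^2$ which you then rewrite to invoke (A5).
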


\begin{proof}
Obviously, $(\cdot,\cdot)_{E(\omega)}$ is a symmetric sesquilinear form on $H^1_0(\omega)$. Hence, it only remains to show the existence of constants $c_E$ and $C_E$ such that
\begin{align*}
c_E \| v \|_{H^1(\omega)}^2 \le (v,v)_{E(\omega)} \le C_E  \| v \|_{H^1(\omega)}^2 \qquad \mbox{for all } v \in H^1(\omega).
\end{align*}
The upper bound is straightforward using the boundedness of the coefficients. To verify the lower bound, we first observe with Youngs inequality for any $\epsilon>0$ that
\begin{eqnarray*}
\lefteqn{\int_{\omega} |A^{1/2} \nabla v - \ci 2^{-1} A^{-1/2} b v|^2 }\\
&\ge& \int_{\omega} A \nabla v \cdot \overline{\nabla v} - \int_{\omega} |A^{-1/2} b| |v| |A^{1/2} \nabla v| - \frac{1}{4} \int_{\omega} |A^{-1/2} b|^2 |v|^2 \\
&\ge& \left( 1 - \epsilon^{-1} \right) \int_{\omega} A \nabla v \cdot \overline{\nabla v} - \frac{1+\epsilon}{4} \int_{\omega} |A^{-1/2} b|^2 |v|^2.
\end{eqnarray*}
Hence
\begin{align*}
(v,v)_{E(\omega)}
&\ge \left( 1 - \epsilon^{-1} \right) \int_{\omega} A \nabla v \cdot \overline{\nabla v} - \frac{2+\epsilon}{4} \int_{\omega}  |A^{-1/2} b|^2 |v|^2
+ \int_{\omega} c |v|^2.
\end{align*}
Choosing $\epsilon=\zeta_1$ together with (A5) finishes the result (where we assumed $\zeta_1>1$). Also observe that $\zeta_1 \le 1$ leads to degeneracies.
\end{proof}

\begin{conclusion}\label{ellipticity-of-L}
The differential operator $L$ is uniformly elliptic and continuous on $H^1_0({\mathcal{D}})$. In particular it holds
\begin{align}
\label{equation-E-L}(v,w)_{E({\mathcal{D}})} = \langle L(v),w \rangle \qquad \mbox{for all } v,w \in H^1_0({\mathcal{D}}).
\end{align}
\end{conclusion}
%
Observe that Lemma \ref{norm-equivalence} and Conclusion \ref{ellipticity-of-L} imply that the operator $L$ degenerates for $\zeta_0=0$ and $\zeta_1=1$.

\begin{proof}[Proof of Conclusion \ref{ellipticity-of-L}]
Let $v,w \in H^1_0({\mathcal{D}})$, we observe that
\begin{eqnarray*}
\lefteqn{\int_{\mathcal{D}} \left( A^{1/2} \nabla v - \ci 2^{-1} A^{-1/2} b v \right) \cdot \overline{\left(A^{1/2} \nabla w - \ci 2^{-1} A^{-1/2} b w\right)}}\\
&=& \int_{\mathcal{D}} A \nabla v \cdot \overline{\nabla w}
-
\ci 2^{-1} \left( \int_{\mathcal{D}} v b \cdot \overline{\nabla w} -  \int_{\mathcal{D}} \overline{w} b \cdot \nabla v \right)
  + \frac{1}{4} \int_{\mathcal{D}} |A^{-1/2} b|^2 v \overline{w}\\
 &=& \int_{\mathcal{D}} A \nabla v \cdot \overline{\nabla w}
+ \int_{\mathcal{D}} \overline{w} \ci b \cdot \nabla v
+ \frac{1}{4} \int_{\mathcal{D}} |A^{-1/2} b|^2 v \overline{w},
\end{eqnarray*}
where we used that $\nabla \cdot b = 0$. Hence we have
\begin{align}
\label{eq-lem-1}
(v,w)_{E({\mathcal{D}})}=
\int_{\mathcal{D}} A \nabla v \cdot \overline{\nabla w}
+ \int_{\mathcal{D}} \overline{w} \ci b \cdot \nabla v
+ \int_{\mathcal{D}}c v \overline{w}.
\end{align}
Assumption (A4) finishes the proof of \eqref{equation-E-L}.
The continuity and ellipticity of $L$ hence follow using Lemma \ref{norm-equivalence}.
\end{proof}

\begin{remark}
Let $\omega \subset {\mathcal{D}}$ be a subdomain and $v,w \in H^1(\omega)$ arbitrary. Under assumptions (A1)-(A5)
we see that there exists a constant $C$ (only depending on $A$, $b$ and $c$) such that
$$\left| \int_{\omega} A \nabla v \cdot \overline{\nabla w } + b \nabla v \cdot \overline{w} + c v \overline{w} \right|
\le C \| v \|_{H^1(\omega)}  \| w \|_{H^1(\omega)}.$$
Using the norm equivalence of Lemma \ref{norm-equivalence} we hence also have
\begin{align}
\label{local-continuity}\left| \int_{\omega} A \nabla v \cdot \overline{\nabla w } + b \nabla v \cdot \overline{w} + c v \overline{w} \right|
\le C_E \| v \|_{E(\omega)} \| w \|_{E(\omega)},
\end{align}
with $C_E=C_E(A,b,c)$. However, note that we do not have $(v,w)_{E(\omega)} = \langle L(v),w \rangle_{H^{-1}(\omega),H^1(\omega)}$ for arbitrary $v,w \in H^1(\omega)$.
\end{remark}

\section{Existence and uniqueness of discrete solutions}
\label{section-exst-uni-disc-sol}

In this section we
consider
the existence and uniqueness of discrete solutions. For that, we require the following well known result which can be found e.g. in the book by Thom\'ee \cite[Lemma 6.4]{Tho06}. It can be easily proved using Sobolev embeddings with a inverse inequality. 

\begin{lemma}\label{lemma-thomee}
Let ${\mathcal{D}} \subset \R^d$ be a convex domain. Then there exists some constant $C_{\infty}$ such that for all $v_h\in S_h$
\begin{align*}
\| v_h \|_{L^{\infty}({\mathcal{D}})} \le
C_{\infty} \ell_h
 \| \nabla v_h \|_{L^2({\mathcal{D}})},
\end{align*}
where
\begin{align*}
\ell_h :=
\begin{cases}
 |\ln{\hmin}|^{1/2} \quad &\mbox{for } d=2 \\
 \hspace{3pt}|{\hmin}|^{-1/2} \hspace{3pt} \quad &\mbox{for } d=3.
\end{cases}
\end{align*}
\end{lemma}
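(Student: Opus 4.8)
The final statement to prove is Lemma~\ref{lemma-thomee}, the inverse-type $L^\infty$-$H^1$ bound for finite element functions.

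\medskip

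The plan is to combine a Sobolev embedding with an inverse inequality, treating $d=2$ and $d=3$ separately because the embedding $H^1(\mathcal{D}) \hookrightarrow L^\infty(\mathcal{D})$ fails in both cases and the logarithmic versus algebraic factor $\ell_h$ reflects exactly how far one is from that borderline. First I would reduce everything to the gradient by recalling the Poincar\'e--Friedrichs inequality on $H^1_0(\mathcal{D})$, so that $\|v_h\|_{H^1(\mathcal{D})}$ is controlled by $\|\nabla v_h\|_{L^2(\mathcal{D})}$ up to a constant depending only on $\mathcal{D}$; this lets me replace the full $H^1$-norm by the seminorm on the right-hand side at the very end. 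The real content lies in bounding $\|v_h\|_{L^\infty(\mathcal{D})}$ by $\|v_h\|_{H^1(\mathcal{D})}$ at the expense of the factor $\ell_h$.

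\medskip

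For $d=3$, the approach is cleanest via a global inverse inequality. One uses a Sobolev embedding that is valid in three dimensions, for instance $H^2(\mathcal{D}) \hookrightarrow L^\infty(\mathcal{D})$ or the Gagliardo--Nirenberg inequality $\|v_h\|_{L^\infty} \lesssim \|v_h\|_{H^1}^{1/2}\|v_h\|_{H^2}^{1/2}$, and then trades the missing half-derivative for a negative power of the mesh size through the standard inverse estimate $\|\nabla v_h\|_{L^2(K)} \le C h_K^{-1}\|v_h\|_{L^2(K)}$ applied elementwise and summed using shape regularity \eqref{shape-regularity}. Each such step costs one factor of $\hmin^{-1/2}$, producing precisely $\ell_h = \hmin^{-1/2}$. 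For $d=2$ the borderline nature of $H^1 \hookrightarrow L^\infty$ forces the logarithmic factor instead: here I would either invoke a discrete Sobolev inequality of Bramble--Br\'ezin type directly, or argue through $L^p$-norms with $p \to \infty$, noting that $\|v_h\|_{L^p(\mathcal{D})} \le C\sqrt{p}\,\|v_h\|_{H^1(\mathcal{D})}$ by the Sobolev embedding with explicit constant growth, and then optimizing the choice $p \sim |\ln \hmin|$ against an inverse estimate relating $\|v_h\|_{L^\infty}$ to $\|v_h\|_{L^p}$ on the smallest element, which yields the factor $|\ln \hmin|^{1/2}$.

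\medskip

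The main obstacle is controlling the dependence of the constants on $p$ (respectively on the number of differentiation steps) so that they assemble into exactly the stated $\ell_h$ and no worse; in the two-dimensional case one must track the $\sqrt{p}$ growth in the Sobolev constant carefully and verify that the optimal $p$ really scales like $|\ln \hmin|$. Since this lemma is classical and is quoted verbatim from \cite[Lemma 6.4]{Tho06}, I would in practice simply cite that reference and remark that the proof follows from Sobolev embeddings combined with an inverse inequality, as the excerpt itself indicates; a fully self-contained argument would require the bookkeeping above but introduces no new ideas.
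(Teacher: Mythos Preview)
Your proposal is correct and matches the paper's treatment exactly: the paper does not give a proof but simply cites \cite[Lemma~6.4]{Tho06} and remarks that it ``can be easily proved using Sobolev embeddings with an inverse inequality,'' which is precisely your strategy. One small caveat: the route via $H^2(\mathcal{D})\hookrightarrow L^\infty(\mathcal{D})$ does not apply directly since $v_h\notin H^2(\mathcal{D})$; the clean version for $d=3$ uses $H^1(\mathcal{D})\hookrightarrow L^6(\mathcal{D})$ followed by the elementwise inverse estimate $\|v_h\|_{L^\infty(K)}\le C h_K^{-1/2}\|v_h\|_{L^6(K)}$, which gives the factor $\hmin^{-1/2}$ immediately.
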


We treat the existence of discrete solutions $u_h^n$ of (\ref{cnd-problem}) together with the solutions of some regularized auxiliary problem.
This auxiliary problem is essential for the analysis
of (\ref{cnd-problem}).
For this purpose, we recall a lemma that was basically proved in \cite{KaM98}.
\begin{lemma}\label{makridakis-lemma}
Let $M  \in \R$ be given by
\begin{align}
\label{def-of-M}M:= \| u \|_{W^{1,\infty}(I_n, W^{1,\infty}(\mathcal{D}))} + C_{W^{1,\infty}} (\mbox{\rm diam}(\mathcal{D})+1) \| \nabla u \|_{L^{\infty}(I_n \times \mathcal{D})},
\end{align}
where $C_{W^{1,\infty}}$ is the constant from (A7). Then, there exists a function $f_{M} : \mathbb{C} \rightarrow \mathbb{C}$ and a constant $c_{M}>0$ such that:
\begin{align}
\label{f_M_cond_1}f_M(z) &= |z|^2 z, \qquad \hspace{85pt}\mbox{if } |z|\le M, \\
\label{f_M_cond_2}\langle f_M(z), z \rangle &\in \R_{\ge 0}, \qquad \hspace{89pt}\mbox{for all } z \in \C, \\
\label{f_M_cond_3a}|f_M(z)| &\le 2 M^2 |z|, \qquad \hspace{75pt}\mbox{for all } z \in \C, \\
\label{f_M_cond_3}|f_M(z) - f_M(w)| &\le 10 M^2 |z - w|, \qquad \hspace{49pt}\mbox{for all } z,w \in \C, \\
\label{f_M_cond_4}\energy{ f_M(z) - f_M(w) } &\le c_{M} \energy{ z -w } \\
\nonumber& \hspace{-10pt}\mbox{for all } z,w \in H^1_0({\mathcal{D}}) \mbox{ with } \| w \|_{W^{1,\infty}({\mathcal{D}})} \le M.
\end{align}
\end{lemma}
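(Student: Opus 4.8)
The plan is to realize $f_M$ as a radial truncation of the cubic nonlinearity. Fix a scalar cutoff $\phi\in C^2([0,\infty))$ with $\phi(t)=t$ for $0\le t\le M^2$, with $0\le \phi'\le 1$ everywhere, with $\phi(t)\le 2M^2$ for all $t$, and with $\phi(t)\equiv 2M^2$ (so that $\phi'=\phi''=0$) for $t\ge 3M^2$. Such a $\phi$ exists: starting from $\phi(M^2)=M^2$ with slope at most one, one needs a horizontal transition of length at least $M^2$ to reach the ceiling $2M^2$, and the interval $[M^2,3M^2]$ leaves ample room to interpolate smoothly while keeping $0\le\phi'\le1$. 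I then define $f_M(z):=\phi(|z|^2)\,z$ for $z\in\C\cong\R^2$.

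With this choice, conditions \eqref{f_M_cond_1}--\eqref{f_M_cond_3a} are immediate. For $|z|\le M$ we have $|z|^2\le M^2$, hence $\phi(|z|^2)=|z|^2$ and $f_M(z)=|z|^2z$, giving \eqref{f_M_cond_1}; since $\phi\ge0$, the product $f_M(z)\overline{z}=\phi(|z|^2)|z|^2$ is real and nonnegative, which is \eqref{f_M_cond_2}; and $|f_M(z)|=\phi(|z|^2)|z|\le 2M^2|z|$ is \eqref{f_M_cond_3a}. For the global Lipschitz bound \eqref{f_M_cond_3} I regard $f_M$ as a map $\R^2\to\R^2$ with Jacobian
\[
DF(v)=\phi(|v|^2)\,I+2\phi'(|v|^2)\,v\,v^{\top}.
\]
Its operator norm is at most $\phi(|v|^2)+2\phi'(|v|^2)|v|^2\le 2M^2+2\cdot 3M^2=8M^2$, where I used $\phi\le 2M^2$ together with $\phi'(t)\,t\le 3M^2$ (valid since $\phi'\le 1$ and $\phi'$ vanishes for $t>3M^2$). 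The mean value inequality on the convex set $\R^2$ then yields $|f_M(z)-f_M(w)|\le 8M^2|z-w|\le 10M^2|z-w|$.

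The main work, and the only genuinely delicate point, is the energy estimate \eqref{f_M_cond_4}. Since $\energy{\cdot}$ is equivalent to $\|\cdot\|_{H^1(\mathcal{D})}$ by Lemma \ref{norm-equivalence}, it suffices to bound $\|f_M(z)-f_M(w)\|_{H^1(\mathcal{D})}$ by $C(M)\|z-w\|_{H^1(\mathcal{D})}$ and absorb the equivalence constants into $c_M$. The $L^2$ part is controlled directly by the pointwise bound \eqref{f_M_cond_3}. For the gradient part I use the chain rule $\nabla f_M(z)=DF(z)\nabla z$ (legitimate because $f_M\in C^1\cap W^{1,\infty}$ and $z\in H^1_0$) and split
\[
\nabla\bigl(f_M(z)-f_M(w)\bigr)=DF(z)\,\nabla(z-w)+\bigl(DF(z)-DF(w)\bigr)\nabla w.
\]
The first term is bounded in $L^2$ by $\sup_v\|DF(v)\|\le 8M^2$ times $\|\nabla(z-w)\|_{L^2(\mathcal{D})}$. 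For the second term I note that, because $\phi'$ and $\phi''$ vanish for $t>3M^2$, the quantities $\phi'(t)\sqrt t$ and $\phi''(t)\,t^{3/2}$ arising when differentiating the entries of $DF$ are bounded by $CM$; hence $DF$ is globally Lipschitz with constant $L(M)=CM$, so $|DF(z)-DF(w)|\le CM\,|z-w|$ pointwise. This is exactly where the hypothesis $\|w\|_{W^{1,\infty}(\mathcal{D})}\le M$ is used: it lets me extract $\nabla w$ in $L^\infty$, giving
\[
\bigl\|(DF(z)-DF(w))\nabla w\bigr\|_{L^2(\mathcal{D})}\le \|\nabla w\|_{L^\infty(\mathcal{D})}\,CM\,\|z-w\|_{L^2(\mathcal{D})}\le CM^2\,\|z-w\|_{L^2(\mathcal{D})}.
\]
Without a pointwise bound on $\nabla w$ this term cannot be closed, which is the crux of the argument. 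Combining the two contributions yields $\|\nabla(f_M(z)-f_M(w))\|_{L^2(\mathcal{D})}\le C(M)\|z-w\|_{H^1(\mathcal{D})}$, and together with the $L^2$ estimate and Lemma \ref{norm-equivalence} this gives \eqref{f_M_cond_4} with $c_M$ depending on $M$ and on the coefficients $A,b,c$ through the norm equivalence.
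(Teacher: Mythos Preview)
Your proof is correct and follows essentially the same strategy as the paper: define $f_M(z)=\gamma(|z|^2)\,z$ for a $C^2$ scalar cutoff $\gamma$ that agrees with the identity on $[0,M^2]$ and plateaus at $2M^2$, then verify the properties from the Jacobian $DF(v)=\gamma(|v|^2)I+2\gamma'(|v|^2)vv^{\top}$. The paper uses an explicit quintic polynomial for the transition (on $[M^2,2M^2]$ rather than your $[M^2,3M^2]$, which is why its Lipschitz constant comes out as exactly $10M^2$) and simply cites \cite{KaM98} together with the norm equivalence of Lemma~\ref{norm-equivalence} for \eqref{f_M_cond_4}, whereas you spell out the Jacobian-splitting argument directly; both routes are valid.
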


The above lemma is a slightly generalized version of \cite[Lemma 4.1]{KaM98} in the sense that we are more precise about the constants in (\ref{f_M_cond_3a}) and (\ref{f_M_cond_3}), condition (\ref{f_M_cond_2}) is new and condition (\ref{f_M_cond_4}) is formulated with a different norm. The latter two points are obvious, therefore we only prove (\ref{f_M_cond_3a}) and (\ref{f_M_cond_3}).

\begin{proof}
Let us define $\theta:=M^2$,
$g(s):=3 \theta^{-4} s^5 - 7 \theta^{-3} s^4 + 4 \theta^{-2}s^3 + s$
and the curve $\gamma : \R \rightarrow \R$ by
\begin{align*}
\gamma(s) :=
\begin{cases}
s &\mbox{for } s \le \theta \\
g(s-\theta)+\theta &\mbox{for } s \in [\theta,2\theta] \\
2 \theta &\mbox{for } s \ge 2\theta.
\end{cases}
\end{align*}
It can be verified that $\gamma \in C^2(\R)$ and we can hence define $f_M(z):=\gamma(|z|^2) z$ for $z \in \C$. In order to verify the properties of $f_M$ it is sufficient to check the behavior of $g$ on $[0,\theta]$. It holds
\begin{align*}
g^\prime(s)
&=(15 \theta^{-4} s^2 + 2 \theta^{-3} s + \theta^{-2})(s - \theta)^2
\end{align*}
which is obviously strictly positive on $[0,\theta)$. Hence $g$ is monotonically increasing and so is $\gamma$. Furthermore, we have
\begin{align*}
g^{\prime\prime}(s)&
= 60 \theta^{-4} s (\theta - s) ( 2\theta/5 - s),
\end{align*}
which implies that $g^{\prime}$ has a maximum in $2\theta/5$ with $g^{\prime}(2\theta/5)\le2$. 
We observe $|g^{\prime\prime}(s)|\le 60 \theta^{-1}$. Combining these properties of $g$ allows us to derive (\ref{f_M_cond_3a}) and (\ref{f_M_cond_3}), with the constants as given in the lemma. Property (\ref{f_M_cond_2}) is obvious since $\gamma$ is monotonically increasing and hence non-negative on $[0,\infty)$. Condition (\ref{f_M_cond_4}) is stated in \cite[Lemma 4.1]{KaM98} with the $H^1$-seminorm, but follows directly by the norm equivalence that we showed earlier.
\end{proof}

Using the previously introduced function $f_M$, we can now state the regularized problem. As we will see later, the solution to the regularized problem is
a solution of the discrete problem (\ref{cnd-problem}) for sufficiently small time steps.

\begin{definition}[Discrete auxiliary problem]
Let $f_M$ denote a function with the properties depicted in Lemma \ref{makridakis-lemma}. Furthermore we let $U^{0}=u_{h}^{0} \in S_h$ with $u_{h}^{0}$ being the initial value used for problem (\ref{cnd-problem}). For $n\ge 1$ we let $U^{n} \in S_h$ denote the solution of
\begin{eqnarray}
\label{aux-problem-bwe}\Ltwo{ U^{n} }{v_h} +
\deltat{n} \hspace{2pt}\ci \hspace{2pt} \left( \langle L(U^{n-\frac{1}{2}}), v_h \rangle
 + \Ltwo{\kappa U^{n-\frac{1}{2}} + \beta f_M(U^{n-\frac{1}{2}})}{v_h} \right)
= \Ltwo{ U^{n-1}}{v_h}
\end{eqnarray}
for all $v_h \in S_h$ and where we defined $U^{n-\frac{1}{2}}:=(U^{n}+U^{n-1})/2$.
\end{definition}

In order to show existence
of the solutions of problem (\ref{cnd-problem}) and (\ref{aux-problem-bwe})
we require the following lemma, which is a well-known conclusion from Brouwers fixed point theorem.
\begin{lemma}\label{brouwer-lemma}
Let $N \in \mathbb{N}$ and let $\overline{B_1(0)}:=\{ \boldsymbol{\alpha} \in \C^N| \hspace{2pt} |\boldsymbol{\alpha}|\le 1\}$ denote the closed unit disk in $\C^N$. Then every continuous function $g : \C^N \rightarrow \C^N$ with $\Re\langle g(\boldsymbol{\alpha}), \boldsymbol{\alpha} \rangle \ge 0$ for all $\boldsymbol{\alpha} \in \partial \overline{B_1(0)}$ has a zero in $\overline{B_1(0)}$, i.e. a point $\boldsymbol{\alpha}_0 \in \overline{B_1(0)}$ with $g(\boldsymbol{\alpha}_0)=0$.
\end{lemma}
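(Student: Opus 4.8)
The plan is to argue by contradiction using Brouwer's fixed point theorem, exactly as the lemma's attribution suggests. Identifying $\C^N$ with $\R^{2N}$, the closed ball $\overline{B_1(0)}$ is a compact convex set homeomorphic to the standard $2N$-dimensional disk, so every continuous self-map of $\overline{B_1(0)}$ has a fixed point.

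First I would suppose, for contradiction, that $g(\boldsymbol{\alpha}) \neq 0$ for every $\boldsymbol{\alpha} \in \overline{B_1(0)}$. Under this assumption the map
$$F(\boldsymbol{\alpha}) := -\frac{g(\boldsymbol{\alpha})}{|g(\boldsymbol{\alpha})|}$$
is well defined and continuous on $\overline{B_1(0)}$, and since $|F(\boldsymbol{\alpha})| = 1$ it maps $\overline{B_1(0)}$ into its boundary $\partial \overline{B_1(0)} \subset \overline{B_1(0)}$. Thus $F$ is a continuous self-map of the ball, and Brouwer's theorem yields a fixed point $\boldsymbol{\alpha}_0 = F(\boldsymbol{\alpha}_0)$.

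Because $F$ takes values on the unit sphere we automatically have $|\boldsymbol{\alpha}_0| = 1$, i.e. $\boldsymbol{\alpha}_0 \in \partial \overline{B_1(0)}$, so the hypothesis $\Re\langle g(\boldsymbol{\alpha}_0), \boldsymbol{\alpha}_0\rangle \ge 0$ applies at $\boldsymbol{\alpha}_0$. On the other hand, the fixed-point relation $\boldsymbol{\alpha}_0 = -g(\boldsymbol{\alpha}_0)/|g(\boldsymbol{\alpha}_0)|$ expresses $\boldsymbol{\alpha}_0$ as a negative real multiple of $g(\boldsymbol{\alpha}_0)$, so a direct computation gives
$$\langle g(\boldsymbol{\alpha}_0), \boldsymbol{\alpha}_0 \rangle = -\frac{1}{|g(\boldsymbol{\alpha}_0)|}\langle g(\boldsymbol{\alpha}_0), g(\boldsymbol{\alpha}_0)\rangle = -|g(\boldsymbol{\alpha}_0)| < 0,$$
which contradicts $\Re\langle g(\boldsymbol{\alpha}_0), \boldsymbol{\alpha}_0 \rangle \ge 0$. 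Hence the assumption fails and $g$ must vanish at some $\boldsymbol{\alpha}_0 \in \overline{B_1(0)}$, proving the claim.

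There is no genuine obstacle here: the result is a standard consequence of Brouwer's theorem. The only points requiring a little care are (i) invoking Brouwer on $\overline{B_1(0)} \subset \C^N$ via the identification with the convex compact ball in $\R^{2N}$, and (ii) keeping track of the inner-product convention so that the sign in the final computation comes out strictly negative. Since $\boldsymbol{\alpha}_0$ is a real multiple of $g(\boldsymbol{\alpha}_0)$, the quantity $\langle g(\boldsymbol{\alpha}_0),\boldsymbol{\alpha}_0\rangle$ is in fact real and equals $-|g(\boldsymbol{\alpha}_0)|$ irrespective of whether $\langle\cdot,\cdot\rangle$ is taken conjugate-linear in the first or the second slot, so the argument is robust to that choice.
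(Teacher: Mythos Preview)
Your proof is correct and follows essentially the same approach as the paper: assume $g$ has no zero, apply Brouwer's fixed point theorem to the normalized map $-g/|g|$ on $\overline{B_1(0)}\cong\overline{B_1(0)}\subset\R^{2N}$, observe the fixed point lies on the sphere, and derive a sign contradiction with the hypothesis $\Re\langle g(\boldsymbol{\alpha}),\boldsymbol{\alpha}\rangle\ge 0$. Your write-up is slightly more detailed about the identification with $\R^{2N}$ and the robustness to the inner-product convention, but the argument is identical.
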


If there exits no $\boldsymbol{\alpha}_0 \in \overline{B_1(0)}$ with $g(\boldsymbol{\alpha}_0)=0$, then $\hat{g}(\boldsymbol{\alpha}):= - g(\boldsymbol{\alpha})/|g(\boldsymbol{\alpha})|$ (interpreted as a function $\hat{g} : \R^{2N} \rightarrow \R^{2N}$) has a fixed point $\boldsymbol{\alpha}^{\ast}  \in \overline{B_1(0)}$ by Brouwers fixed point theorem. Hence $1=|\hat{g}(\boldsymbol{\alpha}^{\ast})|^2=\langle \hat{g}(\boldsymbol{\alpha}^{\ast}),\boldsymbol{\alpha}^{\ast}\rangle=- \langle g(\boldsymbol{\alpha}^{\ast}),\boldsymbol{\alpha}^{\ast}\rangle/|g(\boldsymbol{\alpha}^{\ast})|=- \Re \langle g(\boldsymbol{\alpha}^{\ast}),\boldsymbol{\alpha}^{\ast}\rangle/|g(\boldsymbol{\alpha}^{\ast})|<0$, which is a contradiction.

\begin{lemma}\label{lemma-exist-uniq-disc-sols}
For every $n\ge1$ there exists a solution 
$U^{n} \in S_h$ of problem (\ref{aux-problem-bwe}). If the time step size is such that
$\deltat{n}<2\left( \| \Im(\kappa) \|_{L^{\infty}({\mathcal{D}})} +\beta{10 M^2}\right)^{-1}$
then the solution $U^{n} \in S_h$ is also unique. Recall that $M$ is the constant appearing in Lemma \ref{makridakis-lemma}. Furthermore, if $\Im(\kappa)=0$ and if $\deltat{n}$ and $h$ are such that $\deltat{n}\hmin^{-d} \rightarrow 0$ for $\deltat{n},h\rightarrow 0$, then the solution $u_h^n \in S_h$ of problem (\ref{cnd-problem}) is unique for sufficiently small $\deltat{n}$ as well.
\end{lemma}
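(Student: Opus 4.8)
The plan is to prove the three assertions one after another; in each case I would test the relevant discrete equation with a suitable element of $S_h$ and take the real part, exploiting the sign and structure properties of $L$, $\kappa$ and $f_M$ collected above.

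\textbf{Existence for (\ref{aux-problem-bwe}).} Fix $n\ge1$ and regard $U^{n-1}\in S_h$ as given. The substitution $w:=U^{n-\frac12}$, $U^n=2w-U^{n-1}$, turns (\ref{aux-problem-bwe}) into the problem of finding $w\in S_h$ with
\begin{align*}
\Ltwo{w}{v_h}+\tfrac{\deltat{n}\ci}{2}\bigl(\langle L(w),v_h\rangle+\Ltwo{\kappa w+\beta f_M(w)}{v_h}\bigr)=\Ltwo{U^{n-1}}{v_h}\qquad\text{for all }v_h\in S_h.
\end{align*}
I would define $G\colon S_h\to S_h$ by letting $\Ltwo{G(w)}{v_h}$ equal the left-hand side minus the right-hand side, fix an $L^2(\mathcal{D})$-orthonormal basis of $S_h$ to identify $(S_h,\Ltwo{\cdot}{\cdot})$ with $(\C^{N_h},\langle\cdot,\cdot\rangle)$, and apply Lemma \ref{brouwer-lemma} to the resulting continuous map on a ball of radius $R$ (continuity of $G$ follows since $f_M$ is Lipschitz and $\kappa$ is bounded). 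Testing with $v_h=w$ and taking real parts, the term $\langle L(w),w\rangle=\|w\|_{E(\mathcal{D})}^2$ is real and nonnegative by Conclusion \ref{ellipticity-of-L} and $\Ltwo{f_M(w)}{w}\ge0$ is real by (\ref{f_M_cond_2}), so neither contributes to the real part after multiplication by $\ci$; only the real contribution $-\tfrac{\deltat{n}}{2}\int_{\mathcal{D}}\Im(\kappa)|w|^2$ of $\tfrac{\deltat{n}\ci}{2}\Ltwo{\kappa w}{w}$ survives from the bracket. This yields
\begin{align*}
\Re\Ltwo{G(w)}{w}\ge\Bigl(1-\tfrac{\deltat{n}}{2}\|\Im(\kappa)\|_{L^{\infty}(\mathcal{D})}\Bigr)\|w\|_{L^2(\mathcal{D})}^2-\|U^{n-1}\|_{L^2(\mathcal{D})}\,\|w\|_{L^2(\mathcal{D})},
\end{align*}
which is nonnegative on a sufficiently large sphere once $\deltat{n}$ is small enough that the leading coefficient is positive; Lemma \ref{brouwer-lemma} then provides a zero of $G$, i.e.\ a solution $w$, and hence $U^n=2w-U^{n-1}$.

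\textbf{Uniqueness for (\ref{aux-problem-bwe}).} Suppose $U^n_1,U^n_2$ solve (\ref{aux-problem-bwe}) for the same $U^{n-1}$, and set $e:=U^{n-\frac12}_1-U^{n-\frac12}_2$, so that $U^n_1-U^n_2=2e$. Subtracting the equations, testing with $v_h=e$ and taking the real part eliminates $\langle L(e),e\rangle$ (real) and gives
\begin{align*}
2\|e\|_{L^2(\mathcal{D})}^2=\deltat{n}\int_{\mathcal{D}}\Im(\kappa)|e|^2+\deltat{n}\beta\,\Im\Ltwo{f_M(U^{n-\frac12}_1)-f_M(U^{n-\frac12}_2)}{e}.
\end{align*}
Bounding the first term by $\deltat{n}\|\Im(\kappa)\|_{L^{\infty}(\mathcal{D})}\|e\|_{L^2(\mathcal{D})}^2$ and using the global Lipschitz bound (\ref{f_M_cond_3}) with Cauchy--Schwarz for the second, $|\Im\Ltwo{f_M(U^{n-\frac12}_1)-f_M(U^{n-\frac12}_2)}{e}|\le 10M^2\|e\|_{L^2(\mathcal{D})}^2$, one obtains $2\|e\|_{L^2(\mathcal{D})}^2\le\deltat{n}\bigl(\|\Im(\kappa)\|_{L^{\infty}(\mathcal{D})}+10\beta M^2\bigr)\|e\|_{L^2(\mathcal{D})}^2$. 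Under the stated step-size bound the prefactor is $<2$, forcing $e=0$ and hence $U^n_1=U^n_2$.

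\textbf{Uniqueness for (\ref{cnd-problem}).} Now $\Im(\kappa)=0$. The decisive difference from the auxiliary problem is that the genuine nonlinearity $z\mapsto|z|^2z$ is only \emph{locally} Lipschitz, so its effective Lipschitz constant must be controlled through an $L^{\infty}$-bound on the discrete solutions; this is the main obstacle. First I would record that, because $\Im(\kappa)=0$, testing (\ref{cnd-problem}) with $u_h^{n-\frac12}$ and taking real parts yields mass conservation $\|u_h^n\|_{L^2(\mathcal{D})}=\|u_h^{n-1}\|_{L^2(\mathcal{D})}=\|u_h^0\|_{L^2(\mathcal{D})}$, so any solution has $L^2$-norm bounded independently of $h$ and $\deltat{n}$ by the initial mass. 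Combining this with the inverse estimate $\|v_h\|_{L^{\infty}(\mathcal{D})}\le C\hmin^{-d/2}\|v_h\|_{L^2(\mathcal{D})}$ for $v_h\in S_h$ bounds $\|u_h^{n-\frac12}\|_{L^{\infty}(\mathcal{D})}^2\le C\hmin^{-d}$ for every solution. Repeating the previous computation for two solutions, with $e:=u^{n-\frac12}_{h,1}-u^{n-\frac12}_{h,2}$ and the pointwise estimate $\bigl||z|^2z-|w|^2w\bigr|\le C(|z|^2+|w|^2)|z-w|$ in place of the $f_M$-Lipschitz bound, gives
\begin{align*}
2\|e\|_{L^2(\mathcal{D})}^2\le\deltat{n}\beta\,C\bigl(\|u_{h,1}^{n-\frac12}\|_{L^{\infty}(\mathcal{D})}^2+\|u_{h,2}^{n-\frac12}\|_{L^{\infty}(\mathcal{D})}^2\bigr)\|e\|_{L^2(\mathcal{D})}^2\le C'\deltat{n}\hmin^{-d}\|e\|_{L^2(\mathcal{D})}^2.
\end{align*}
Since $\deltat{n}\hmin^{-d}\to0$, for $\deltat{n}$ small enough the prefactor is $<2$, whence $e=0$ and $u_h^n$ is unique. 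The whole argument hinges on having mass conservation (hence an $h$-independent $L^2$-bound) available, which is exactly why the hypothesis $\Im(\kappa)=0$ enters, and on the inverse inequality, which is responsible for the $\hmin^{-d}$ factor and thus for the quantitative step-size condition $\deltat{n}\hmin^{-d}\to0$.
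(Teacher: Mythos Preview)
Your uniqueness arguments for both the regularized and the original scheme are essentially the same as the paper's: subtract two solutions, test with (a multiple of) the difference of the midpoints, take real parts so that $L$ drops out, and control the remaining nonlinear term either by the global Lipschitz bound \eqref{f_M_cond_3} or by the pointwise cubic estimate combined with mass conservation and the inverse inequality. The only cosmetic difference in the last part is that the paper pulls the solutions out in $L^2$ and applies the inverse estimate to the \emph{difference}, whereas you pull the solutions out in $L^\infty$; both routes produce the same $\deltat{n}\hmin^{-d}$ factor.

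There is, however, a small gap in your existence argument. As stated, the lemma claims existence of $U^n$ \emph{without} any restriction on $\deltat{n}$, but your coercivity estimate
\[
\Re\Ltwo{G(w)}{w}\ge\Bigl(1-\tfrac{\deltat{n}}{2}\|\Im(\kappa)\|_{L^{\infty}(\mathcal{D})}\Bigr)\|w\|_{L^2(\mathcal{D})}^2-\|U^{n-1}\|_{L^2(\mathcal{D})}\|w\|_{L^2(\mathcal{D})}
\]
only becomes useful once $\deltat{n}<2/\|\Im(\kappa)\|_{L^{\infty}(\mathcal{D})}$. The reason is your choice of scaling: by keeping the equation as written, the quadratic positive term is $\|w\|_{L^2}^2$, and the $\Im(\kappa)$ contribution competes with it. The paper instead multiplies the equation by $-\ci\deltat{n}^{-1}$ before defining the Brouwer map; then the mass term becomes purely imaginary and the quadratic positive term is $\tfrac12\|\alpha\|_{E(\mathcal{D})}^2$ coming from $L$. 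The potential term is handled by writing $\Ltwo{(\kappa\mathrm{Id}+\beta f_M)(\tfrac{\alpha+U^{n-1}}{2})}{\alpha}$ as twice the pairing against $\tfrac{\alpha+U^{n-1}}{2}$ minus the pairing against $U^{n-1}$; the first piece has nonnegative real part by $\Re(\kappa)\ge0$ (assumption (A5)) and \eqref{f_M_cond_2}, and the second is only linear in $\alpha$. All remaining terms are then linear in $\|\alpha\|_{E(\mathcal{D})}$ (plus a constant), so the quadratic energy term dominates on any sufficiently large sphere, for \emph{every} $\deltat{n}$. If you want unconditional existence as the lemma asserts, you need this rotation; your version only yields existence for small $\deltat{n}$ when $\Im(\kappa)\neq0$.
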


\begin{proof}
We start with the existence result for the solution $U^{n}$ of problem (\ref{aux-problem-bwe}). First, recall that $N_h=$dim$(S_h)$ and that $\lambda_m$ denotes the $m$'th Lagrange basis function. We want to apply Lemma \ref{brouwer-lemma} and define $g : \C^{N_h} \rightarrow \C^{N_h}$ for $\boldsymbol{\alpha}\in \C^{N_h}$ by
\begin{eqnarray*}
\lefteqn{g_\ell(\boldsymbol{\alpha}) := - \deltat{n}^{-1} \hspace{2pt}\ci \sum_{m=1}^{N_h} \boldsymbol{\alpha}_m \Ltwo{ \lambda_m }{ \lambda_\ell }
}\\
&\enspace&
 +
\frac{1}{2} \sum_{m=1}^{N_h} \boldsymbol{\alpha}_m \hspace{2pt} \langle L(\lambda_m), \lambda_\ell \rangle
 + \Ltwo{ (\kappa \mbox{\rm Id} + \beta f_M)( \frac{1}{2}U^{n-1} + \frac{1}{2}\sum_{m=1}^{N_h} \boldsymbol{\alpha}_m \lambda_m )}{\lambda_\ell} +F_\ell,
\end{eqnarray*}
where $F\in \C^{N_h}$ is defined by
\begin{align*}
F_\ell := \frac{1}{2}\langle L(U^{n-1}), \lambda_\ell \rangle + \Ltwo{ \deltat{n}^{-1} \hspace{2pt}\ci U^{n-1}}{\lambda_\ell}.
\end{align*}

To show the existence of some $\boldsymbol{\alpha}_0$ with $g(\boldsymbol{\alpha}_0)=0$, it is sufficient (by scaling arguments) to show that there exists some $K \in \R_{>0}$ so that $\Re\langle g(\boldsymbol{\alpha}), \boldsymbol{\alpha} \rangle \ge 0$ for all $\boldsymbol{\alpha} \in \C^{N_h}$ with $|\boldsymbol{\alpha}|= K$. For brevity, let us denote $\alpha:=\sum_{m=1}^{N_h} \boldsymbol{\alpha}_m \lambda_m$. Since $\Re \left( \deltat{n}^{-1} \hspace{2pt}\ci \langle\alpha,\alpha\rangle_{L^2({\mathcal{D}})} \right) = 0$; $(\alpha,\alpha)_{E({\mathcal{D}})}=  \langle L(\alpha),\alpha \rangle$ and by construction of $f_M$
\begin{align*}
\Re \left( \langle (\kappa \mbox{\rm Id} + \beta f_M)(\frac{\alpha + U^{n-1}}{2}),\frac{\alpha + U^{n-1}}{2}) \rangle_{L^2({\mathcal{D}})} \right)&\ge0,
 \end{align*}
we obtain
\begin{align*}
\Re\langle g(\boldsymbol{\alpha}), \boldsymbol{\alpha} \rangle &\ge
\frac{1}{2}\| \alpha \|_{E({\mathcal{D}})}^2
+ \Re\langle F, \boldsymbol{\alpha} \rangle
- \Ltwo{ (\kappa \mbox{\rm Id} + \beta f_M)( \frac{1}{2}U^{n-1} + \frac{1}{2}\sum_{m=1}^{N_h} \boldsymbol{\alpha}_m \lambda_m)}{U^{n-1}}\\
&\ge \frac{1}{2}\| \alpha \|_{E({\mathcal{D}})}^2
-  \| \deltat{n}^{-1} U^{n-1} \|_{L^2({\mathcal{D}})} \| \alpha \|_{L^2({\mathcal{D}})}
- \frac{1}{2} \| \alpha \|_{E({\mathcal{D}})} \| U^{n-1} \|_{E({\mathcal{D}})}\\
&\enspace \qquad
- \frac{1}{2} \left( \| \kappa\|_{L^{\infty}(\mathcal{D})} + \beta {2M^2} \right) \|U^{n-1} \|_{L^2({\mathcal{D}})}
\left( \| \alpha \|_{L^2({\mathcal{D}})} + \|U^{n-1} \|_{L^2({\mathcal{D}})}\right)
\\
&\ge \| \alpha \|_{E({\mathcal{D}})} \left( C_1 \| \alpha \|_{E({\mathcal{D}})} - C_2 \right) - C_3,
\end{align*}
where we used the Poincar\'e-Friedrichs inequality in the last step and where $C_1$, $C_2$ and $C_3$ are appropriate $\alpha$-independent positive constants.
Consequently, for all $\boldsymbol{\alpha}$ with $\|{\alpha}\|_{E({\mathcal{D}})} \ge C_2/C_1 + \sqrt{ (C_2/C_1)^2 + C_3/C_1}$ we have $\Re\langle g(\boldsymbol{\alpha}), \boldsymbol{\alpha} \rangle \ge 0$ and hence, by norm equivalence in finite dimensional spaces, there exists a sufficiently large $K$ such that
$\Re\langle g(\boldsymbol{\alpha}), \boldsymbol{\alpha} \rangle \ge 0$  for all $\boldsymbol{\alpha}$ with $|\boldsymbol{\alpha}|= K$. This gives us existence of a discrete solution of (\ref{aux-problem-bwe}). 

For uniqueness in (\ref{aux-problem-bwe})
we use an $L^2$-contraction argument. Let us compare two solution $U_{(1)}^n$ and $U_{(2)}^n$ of problem (\ref{aux-problem-bwe}). Using
the equation and testing with $U_{(1)}^n - U_{(2)}^n$
we get
\begin{eqnarray*}
\lefteqn{\| U_{(1)}^n - U_{(2)}^n \|_{L^2({\mathcal{D}})}^2}\\
&=& - \frac{\deltat{n}}{2} \hspace{2pt}\ci \hspace{2pt} \left( \langle L( U_{(1)}^n - U_{(2)}^n) , U_{(1)}^n - U_{(2)}^n \rangle
 + \Ltwo{ \kappa (U_{(1)}^n - U_{(2)}^n)}{ U_{(1)}^n - U_{(2)}^n } \right)\\
 &\enspace& \quad - 2 \deltat{n} \hspace{2pt}\ci \hspace{2pt} \beta \Ltwo{f_M(\frac{U_{(1)}^n + U^{n-1}}{2}) - f_M(\frac{U_{(2)}^n + U^{n-1}}{2})}{\frac{U_{(1)}^n + U^{n-1}}{2} - \frac{U_{(2)}^n + U^{n-1}}{2}}\\
&=&
 \frac{1}{2}\Ltwo{ \deltat{n} \Im(\kappa) (U_{(1)}^n - U_{(2)}^n)}{ U_{(1)}^n - U_{(2)}^n }\\
 &\enspace& \quad + 2 \deltat{n} \hspace{2pt} \beta \hspace{2pt}  \Im \left( \Ltwo{f_M(\frac{U_{(1)}^n + U^{n-1}}{2}) - f_M(\frac{U_{(2)}^n + U^{n-1}}{2})}{\frac{U_{(1)}^n + U^{n-1}}{2} - \frac{U_{(2)}^n + U^{n-1}}{2}} \right) \\
 &\overset{(\ref{f_M_cond_3})}{\le}& \frac{1}{2} \deltat{n} \left( \| \Im(\kappa) \|_{L^{\infty}({\mathcal{D}})} +\beta{10 M^2}\right) \| U_{(1)}^n - U_{(2)}^n \|_{L^2({\mathcal{D}})}^2.
\end{eqnarray*}
Since we assumed that $\deltat{n} \left( \| \Im(\kappa) \|_{L^{\infty}({\mathcal{D}})} +\beta{10 M^2}\right)<2$ we conclude $\| U_{(1)}^n - U_{(2)}^n \|_{L^2({\mathcal{D}})}=0$ and have hence uniqueness.

For two solutions $u_{h,(1)}^{n}$ and $u_{h,(2)}^{n}$ of the original IRK scheme \eqref{cnd-problem}, we
use the additional assumption $\Im(\kappa)=0$ to conclude with the mass conservation that
$$
\| u_{h,(1)}^{n} \|_{L^2(\mathcal{D})}=\| u_{h,(2)}^{n}  \|_{L^2(\mathcal{D})}=\| u_{h}^{n-1} \|_{L^2(\mathcal{D})}=\| u_{h}^{0} \|_{L^2(\mathcal{D})} 
= \| \mathcal{I}_h( u_{0} ) \|_{L^2(\mathcal{D})} 
\le 
C \| u_{0} \|_{H^2(\mathcal{D})},
$$
where we used the stability estimate $\| \mathcal{I}_h(v) \|_{L^2(\mathcal{D})} \le C \| v \|_{H^2(\mathcal{D})}$ for $v\in H^{2}(\mathcal{D})$ for the Lagrange interpolation operator $\mathcal{I}_h$. With this, we
can proceed as before to obtain
\begin{eqnarray*}
\lefteqn{\| u_{h,(1)}^{n} - u_{h,(2)}^{n} \|_{L^2({\mathcal{D}})}^2}\\
 &\le& \frac{\deltat{n}}{8} \hspace{2pt} \beta \hspace{2pt} \Im \Ltwo{\left( \left| u_{h,(1)}^{n} + u_{h}^{n-1}\right|^2 - \left| u_{h,(2)}^{n} + u_{h}^{n-1} \right|^2\right) (u_{h,(2)}^{n} + u_h^{n-1}) }
 { u_{h,(1)}^n - u_{h,(2)}^n } \\
 &\le& \frac{\deltat{n}}{4} \hspace{2pt} \beta \hspace{2pt} \int_{\mathcal{D}}
\left( \left| u_{h,(1)}^{n} + u_{h}^{n-1}\right|^2 + \left| u_{h,(2)}^{n} + u_{h}^{n-1} \right|^2 \right)
 | u_{h,(1)}^n - u_{h,(2)}^n |^2 \\
  &\le& \deltat{n} \hspace{2pt} \beta \hspace{2pt} C 
  \| u_{0} \|_{H^2(\mathcal{D})}^2 \| u_{h,(1)}^{n} - u_{h,(2)}^{n} \|_{L^{\infty}({\mathcal{D}})}^2.
\end{eqnarray*}
With the inverse estimate $\|  u_{h,(1)}^{n} - u_{h,(2)}^{n} \|_{L^{\infty}(\mathcal{D})} \le C \hmin^{ - d/2 }  \|  u_{h,(1)}^{n} - u_{h,(2)}^{n} \|_{L^2(\mathcal{D})}$ we conclude that for an appropriate positive constant
$$
\| u_{h,(1)}^{n} - u_{h,(2)}^{n} \|_{L^2({\mathcal{D}})}^2 \le C(\beta,\|u_0\|_{H^2(\mathcal{D})}) \hspace{3pt}
\deltat{n} \hmin^{ - d } \| u_{h,(1)}^{n} - u_{h,(2)}^{n} \|_{L^2({\mathcal{D}})}^2
$$
and hence $u_{h,(1)}^{n} = u_{h,(2)}^{n}$ for sufficiently small $\deltat{n}$.
\end{proof}

\section{A priori error estimates}
\label{final-apriori-proofs}

In the following we assume that $u$ denotes a solution of (\ref{model-problem}) with sufficient regularity.
In this section we derive an a priori error estimate for the discrete solutions. However, instead of taking (\ref{cnd-problem}) as our reference problem we follow the ideas of \cite{KaM98} and take the auxiliary problem (\ref{aux-problem-bwe}) as our reference. In this context, note that by the definitions of $u$ and $f_M$ we have
\begin{align}
\label{exact-solution-with-discrete-test-func}\Ltwo{ u(\cdot,t_n) }{ v_h } +\ci \int_{I_n} \langle L(u),v_h \rangle +\ci \int_{I_n} \Ltwo{ \kappa u + \beta f_M(u) }{ v_h }
= \Ltwo{ u(\cdot,t_{n-1}) }{ v_h }
\end{align}
for all $v_h \in S_h$.
Since $u$ is continuous in time we can define $u^n:=u(\cdot,t_n)$.

For simplicity (and slightly abusing the notation), we write for $v \in H^2({\mathcal{D}})\cap H^1_0({\mathcal{D}})$
$$\triangleL v := - \nabla \cdot \left( A \nabla v \right) + b \cdot \nabla v + c v,$$
so that
\begin{align}
\label{green-for-nablaL}
(v,w)_{E({\mathcal{D}})} = \Ltwo{\triangleL v}{w} \qquad \mbox{for all } v,w \in H^1_0({\mathcal{D}}) \cap H^2({\mathcal{D}}).
\end{align}

In order to derive the a priori error estimates, we first derive an error identity and then estimate the various terms in the identity.

Before starting, recall Definition \ref{def-ritz-projection}, i.e. the definition of the Ritz-projection associated with $L$.
Note that we do not include the term $(\kappa v , \phi_h )$ in the Ritz-projection since we want $L$ to be a smooth and self-adjoint operator. Since $\kappa$ can be imaginary, equation (\ref{equation-E-L}) would not be valid any longer.

Finally, we also recall a standard result (which follows from the best approximation property of $P_h$ with respect to the $H^1$-norm and an Aubin-Nitsche duality argument).

\begin{lemma}
Assume (A1)-(A5). There exist generic positive constants $C_1$ and $C_2$ such that
\begin{align}
\label{estimate-ritz-projection}\| v - \PR(v) \|_{L^2({\mathcal{D}})} \le C_1 | h^2 v |_{H^2({\mathcal{D}})} \qquad \mbox{and} \qquad
\| v - \PR(v)\|_{E({\mathcal{D}})} \le C_2 | h v |_{H^2({\mathcal{D}})}
\end{align}
for all $v\in H^2({\mathcal{D}}) \cap H^1_0({\mathcal{D}})$.
\end{lemma}

In the first step, we establish an error identity.
\begin{lemma}[Error identity]
We introduce the abbreviation $\hat{f}(v):=\kappa v + \beta f_M(v)$. For $n \in \N$, $n\ge1$ we define the error splitting by
\begin{align}
\label{error-splitting}
e_h^n := \underset{\mbox{\large$=:E_h^n \in S_h$}}{\underbrace{(U^n_h - \PR(u^n))}} +
(\PR(u^n) -  u^n)
\end{align}
and the error contributions by
\begin{align*}
\xi_n^{(1)} &:= \int_{I_n}  \PR(\partial_t u(\cdot,t)) - \partial_t u(\cdot,t) \hspace{2pt} dt, \qquad
\xi_n^{(2)} := \deltat{n} \hspace{2pt} \ci \hspace{2pt} \left( \hat{f}( {\frac{\PR(u^{n}) +\PR(u^{n-1})}{2}} ) - \hat{f}({\frac{U^{n}+U^{n-1}}{2}}) \right),\\
\xi_n^{(3)} &:= \ci \int_{I_n} \hat{f}(u(\cdot,t)) - \hat{f}( \frac{\PR(u^{n}) +\PR(u^{n-1})}{2} ) \hspace{2pt} dt, \qquad \xi_n^{(4)} := \ci \int_{I_n} u(\cdot,t) - \frac{u^{n}+u^{n-1}}{2} \hspace{2pt} dt.
\end{align*}
With these notations the following $L^2$-norm identity holds for $E_h^{n}$
\begin{eqnarray}
\label{error-identity}\lefteqn{\| E_h^{n} \|_{L^2({\mathcal{D}})}^2}\\
\nonumber&=& \| E_h^{n-1} \|_{L^2({\mathcal{D}})}^2 +
\Re \left( \Ltwo{ \xi_n^{(1)} + \xi_n^{(2)} + \xi_n^{(3)} }{ E_h^{n}+ E_h^{n-1} } + \langle L(\xi_n^{(4)}) , E_h^{n}+ E_h^{n-1} \rangle  \right).
\end{eqnarray}
and the following energy-nom identity
\begin{eqnarray}
\nonumber\lefteqn{\energy{ E_h^{n}}^2 = \energy{ E_h^{n-1}}^2 + \Re \langle L( E_h^{n} + E_h^{n-1}), P_{L^2}( \xi_n^{(1)} + \xi_n^{(2)} + \xi_n^{(3)} ) \rangle}\\
\label{error-identity-enrgy}&+& \Re \left( \ci \langle L( E_h^{n} + E_h^{n-1} ), \int_{I_n} P_{L^2}( \triangleL u(\cdot,t) ) - \frac{P_{L^2} (\triangleL u^{n})+ P_{L^2} (\triangleL u^{n-1})}{2} \hspace{2pt} dt
\rangle \right)
\end{eqnarray}
\end{lemma}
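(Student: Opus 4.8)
The plan is to derive both identities from a single perturbation equation for $E_h^n$, obtained by subtracting the Ritz-projected exact relation from the scheme, and then to pair this equation with a suitably chosen discrete test function before passing to real parts. Throughout I write $\bar E_h := (E_h^n + E_h^{n-1})/2$ and recall from \eqref{error-splitting} that $E_h^n = U^n - \PR(u^n) \in S_h$. To build the perturbation equation I would start from $\PR(u^n) - \PR(u^{n-1}) = \int_{I_n}\PR(\partial_t u)\,dt$ (linearity of $\PR$) and split $\PR(\partial_t u) = \partial_t u + (\PR(\partial_t u) - \partial_t u)$, which produces $\xi_n^{(1)}$ together with $\Ltwo{u^n - u^{n-1}}{v_h}$; the latter I replace using \eqref{exact-solution-with-discrete-test-func}. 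Subtracting the resulting identity from the scheme \eqref{aux-problem-bwe} and testing with arbitrary $v_h \in S_h$, I would collect terms into
\begin{align*}
\Ltwo{E_h^n - E_h^{n-1}}{v_h} &= -\ci\deltat{n}\langle L(\bar E_h), v_h\rangle + \langle L(\xi_n^{(4)}), v_h\rangle\\
&\quad + \Ltwo{\xi_n^{(1)} + \xi_n^{(2)} + \xi_n^{(3)}}{v_h}\qquad\text{for all } v_h\in S_h.
\end{align*}
The two reductions that make the error contributions appear exactly as defined are: in the principal part I write $U^{n-\frac12} = \bar E_h + \tfrac12(\PR(u^n)+\PR(u^{n-1}))$ and invoke the Galerkin orthogonality \eqref{definition-ritz-projection} to replace the averaged Ritz projection by $\tfrac12(u^n+u^{n-1})$ inside $\langle L(\cdot),v_h\rangle$, so that the residual $L$-consistency collapses into the quadrature defect $\xi_n^{(4)}$; and, inserting $\hat f$ evaluated at the averaged Ritz projection, the nonlinear/potential consistency splits into $\xi_n^{(2)}$ and $\xi_n^{(3)}$, using $\deltat{n}\,\hat f(\cdot) = \int_{I_n}\hat f(\cdot)\,dt$ for the piecewise-constant average.

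For the $L^2$-identity I would test with $v_h = E_h^n + E_h^{n-1} = 2\bar E_h$. On the left, $\Re\Ltwo{E_h^n - E_h^{n-1}}{E_h^n + E_h^{n-1}} = \|E_h^n\|_{L^2(\mathcal{D})}^2 - \|E_h^{n-1}\|_{L^2(\mathcal{D})}^2$, since the two cross terms are complex conjugates and hence cancel under $\Re$. The decisive point is that the principal part drops out: $-\ci\deltat{n}\langle L(\bar E_h), 2\bar E_h\rangle = -2\ci\deltat{n}\,\energy{\bar E_h}^2$ is purely imaginary by Conclusion \ref{ellipticity-of-L} (the $E$-norm being real and non-negative), so $\Re$ annihilates it. Taking real parts then gives \eqref{error-identity}.

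For the energy identity I would instead test the same perturbation equation with $v_h = \mathcal{L}_h(E_h^n + E_h^{n-1})$, where $\mathcal{L}_h : S_h \to S_h$ is the discrete operator defined by $\Ltwo{\mathcal{L}_h w_h}{z_h} = \langle L(w_h), z_h\rangle$ for all $z_h \in S_h$. Since $(\cdot,\cdot)_{E(\mathcal{D})}$ is Hermitian, $\mathcal{L}_h$ is self-adjoint for $\Ltwo{\cdot}{\cdot}$, and the left-hand side becomes $(E_h^n - E_h^{n-1}, E_h^n + E_h^{n-1})_{E(\mathcal{D})}$, whose real part telescopes to $\energy{E_h^n}^2 - \energy{E_h^{n-1}}^2$. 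On the right, each pairing $\Ltwo{\xi}{\mathcal{L}_h(E_h^n+E_h^{n-1})}$ equals $\Ltwo{P_{L^2}(\xi)}{\mathcal{L}_h(E_h^n+E_h^{n-1})} = \langle L(E_h^n+E_h^{n-1}), P_{L^2}(\xi)\rangle$ up to conjugation, which is precisely why the $L^2$-projection surfaces in \eqref{error-identity-enrgy}; the term $\langle L(\xi_n^{(4)}), \cdot\rangle$ is first rewritten via \eqref{green-for-nablaL} as $\Ltwo{\triangleL\xi_n^{(4)}}{\cdot}$ and then, using the definition of $\xi_n^{(4)}$ together with $\triangleL(\tfrac12(u^n+u^{n-1})) = \tfrac12(\triangleL u^n + \triangleL u^{n-1})$ and linearity of $P_{L^2}$, unfolded into the displayed integral of $P_{L^2}(\triangleL u)$. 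As before, $-\ci\deltat{n}\langle L(\bar E_h), \mathcal{L}_h(E_h^n+E_h^{n-1})\rangle$ reduces to $\ci$ times a real squared $L^2$-norm and vanishes under $\Re$, yielding \eqref{error-identity-enrgy}.

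I expect the main obstacle to be organizational rather than analytic: the nonlinearity is never estimated at this stage but only carried symbolically in $\xi_n^{(2)}, \xi_n^{(3)}$, so the real work is the exact bookkeeping that aligns the four consistency contributions with their definitions, and—most importantly—the correct choice of test function for the energy version ($\mathcal{L}_h(E_h^n+E_h^{n-1})$ in place of $E_h^n+E_h^{n-1}$) together with the verification that the skew-Hermitian principal part is annihilated by $\Re$ in both cases. The single point requiring a genuine hypothesis is the rewriting $\langle L(\xi_n^{(4)}), \cdot\rangle = \Ltwo{\triangleL\xi_n^{(4)}}{\cdot}$ through \eqref{green-for-nablaL}, which relies on the $H^2\cap H^1_0(\mathcal{D})$-regularity of $u(\cdot,t)$ so that $\triangleL\xi_n^{(4)}\in L^2(\mathcal{D})$.
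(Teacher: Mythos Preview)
Your proposal is correct and follows essentially the same approach as the paper: you derive the same perturbation identity (the paper's equation \eqref{error-identity-with-test-func}) by subtracting the Ritz-projected exact relation from the auxiliary scheme and using Galerkin orthogonality, then test with $E_h^n+E_h^{n-1}$ for the $L^2$-identity and with the discrete operator $\mathcal{L}_h(E_h^n+E_h^{n-1})$ for the energy identity. The paper phrases the latter test function as the $L^2$-Riesz representer $G_h^n\in S_h$ of the functional $\langle L(E_h^n+E_h^{n-1}),\cdot\rangle$, which is precisely your $\mathcal{L}_h(E_h^n+E_h^{n-1})$ (up to the conjugation you already flagged, irrelevant under $\Re$), and the rewriting of the $\xi_n^{(4)}$-term via \eqref{green-for-nablaL} and $P_{L^2}$ is handled identically.
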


\begin{proof}
Recalling the definition of $U^{n}$ we have for all $v_h \in S_h$
\begin{eqnarray}
\label{identity-proof-1}\Ltwo{ U^{n} -  U^{n-1} }{v_h} +
\deltat{n} \hspace{2pt} \ci \hspace{2pt} \langle L(\frac{U^{n}+U^{n-1}}{2}), v_h \rangle
= - \deltat{n} \hspace{2pt} \ci \hspace{2pt} \Ltwo{ \hat{f}(\frac{U^{n}+U^{n-1}}{2})}{v_h}.
\end{eqnarray}
Subtracting the term
\begin{eqnarray*}
\Ltwo{ \PR(u^{n}) - \PR(u^{n-1}) }{v_h} +
\deltat{n} \hspace{2pt} \ci \hspace{2pt} \langle L( \frac{\PR(u^{n}) +\PR(u^{n-1})}{2} ) , v_h \rangle
\end{eqnarray*}
on both sides of (\ref{identity-proof-1}) gives us
\begin{eqnarray}
\nonumber\lefteqn{\Ltwo{ E_h^{n} -E_h^{n-1} }{v_h} +
 \frac{1}{2}\deltat{n} \ci \hspace{2pt} \langle L(E_h^{n} + E_h^{n-1}), v_h \rangle}\\
\nonumber &=& \Ltwo{ \PR(u^{n-1})}{v_h} - \Ltwo{ \PR(u^{n})}{v_h}
-\deltat{n} \hspace{2pt} \ci \hspace{2pt} \langle L( \frac{\PR(u^{n}) +\PR(u^{n-1})}{2} ) , v_h \rangle\\
\nonumber&\enspace& \quad - \deltat{n} \hspace{2pt} \ci \hspace{2pt} \Ltwo{ \hat{f}(\frac{U^{n}+U^{n-1}}{2})}{v_h} \\
\nonumber&=&
\Ltwo{ \PR(u^{n-1})}{v_h} - \Ltwo{ \PR(u^{n})}{v_h}
-\deltat{n} \hspace{2pt} \ci \hspace{2pt} \langle L( \frac{u^{n} +u^{n-1}}{2} ) , v_h \rangle\\
\nonumber&\enspace& \quad
+ \deltat{n} \hspace{2pt} \ci \hspace{2pt}  \left( \Ltwo{
 \hat{f}( {\frac{\PR(u^{n}) +\PR(u^{n-1})}{2}} ) - \hat{f}({\frac{U^{n}+U^{n-1}}{2}}) - \hat{f}( \frac{\PR(u^{n}) +\PR(u^{n-1})}{2} )}{v_h} \right)\\
\label{error-identity-with-test-func}&\overset{(\ref{exact-solution-with-discrete-test-func})}{=}&
  \Ltwo{\PR(u^{n-1}) -  u^{n-1}}{v_h} - \Ltwo{\PR(u^{n}) -  u^{n} }{v_h}\\
\nonumber&\enspace& \quad
+ \ci \langle \int_{I_n} L(u)(\cdot,t) \hspace{2pt} dt  - \deltat{n} L(\frac{u^{n}+u^{n-1}}{2}) ,v_h \rangle\\
\nonumber&\enspace& \quad+ \ci \langle \int_{I_n} \hat{f}(u(\cdot,t)) \hspace{2pt} dt - \deltat{n} \hat{f}( \frac{\PR(u^{n}) +\PR(u^{n-1})}{2} ), v_h \rangle_{L^2({\mathcal{D}})}\\
\nonumber&\enspace& \quad+ \deltat{n} \hspace{2pt} \ci \hspace{2pt}  \left(  \Ltwo{
 \hat{f}( {\frac{\PR(u^{n}) +\PR(u^{n-1})}{2}} ) - \hat{f}({\frac{U^{n}+U^{n-1}}{2}})}{v_h} \right)
\end{eqnarray}
Testing with $v_h=E_h^{n}+ E_h^{n-1}$ and only using the real part of the equation gives us
\begin{eqnarray*}
\lefteqn{ \| E_h^{n} \|^2_{L^2({\mathcal{D}})}}\\
&=& \deltat{n} \hspace{2pt} \Re\left( \ci \hspace{2pt}
\Ltwo{ 
 \hat{f}( {\frac{\PR(u^{n}) +\PR(u^{n-1})}{2}} ) - \hat{f}({\frac{U^{n}+U^{n-1}}{2}})
}{ E_h^{n}+ E_h^{n-1} } \right)
+ \| E_h^{n-1} \|^2_{L^2({\mathcal{D}})} \\
&\enspace& \enspace + \Re\left( \Ltwo{\PR(u^{n-1}) -  u^{n-1}}{ E_h^{n}+ E_h^{n-1} } \right) - \Re\left( \Ltwo{ \PR(u^{n}) -  u^{n} }{ E_h^{n}+ E_h^{n-1} } \right) \\
&\enspace& \enspace
+ \Re\left( \ci \langle \int_{I_n} L(u)(\cdot,t) \hspace{2pt} dt  - \deltat{n} L( \frac{u^{n}+u^{n-1}}{2} ) , E_h^{n}+ E_h^{n-1} \rangle \right) \\
&\enspace& \enspace
+ \Re \left(  \ci  \langle \int_{I_n} \hat{f}(u(\cdot,t)) \hspace{2pt} dt - \deltat{n} \hat{f}( \frac{\PR(u^{n}) +\PR(u^{n-1})}{2} ), E_h^{n}+ E_h^{n-1} \rangle_{L^2({\mathcal{D}})} \right).
\end{eqnarray*}
The simplification
\begin{eqnarray*}
\lefteqn{\Re\left( \Ltwo{\PR(u^{n-1}) -  u^{n-1}}{ E_h^{n}+ E_h^{n-1} } \right) - \Re\left( \Ltwo{ \PR(u^{n}) -  u^{n} }{ E_h^{n}+ E_h^{n-1} } \right)}\\
&=& \Re\left( \Ltwo{\int_{I_n}  \PR(\partial_t u(\cdot,t)) - \partial_t u(\cdot,t) \hspace{2pt} dt   }{ E_h^{n}+ E_h^{n-1} } \right)
\end{eqnarray*}
finishes the proof of the $L^2$-norm identity.

To derive the energy-norm identity we use the $L^2$-Riesz representer $G_h^n\in S_h$ of the error functional $\langle L(E_h^n) , \cdot \rangle$. The Riesz representer $G_h^n\in S_h$ is characterized by the equation
\begin{align}
\Ltwo{ v_h }{ G_h^n } = \langle L( E_h^{n}+ E_h^{n-1} ) , v_h \rangle \qquad \mbox{for all } v_h \in S_h.
\end{align}
Testing with $v_h = G_h^n$ in (\ref{error-identity-with-test-func}) and using $\langle L(E_h^{n}+ E_h^{n-1}), G_h^n \rangle = \| G_h^n \|_{L^2({\mathcal{D}})}^2$ we obtain
\begin{eqnarray}
\nonumber\lefteqn{ \langle L(E_h^{n} + E_h^{n-1}), E_h^{n}- E_h^{n-1} \rangle  +
 \deltat{n} \ci \hspace{2pt} \frac{1}{2} \| G_h^n \|_{L^2({\mathcal{D}})}^2 } \\
\label{energy-identity-proof-step-1}&=&
 \Ltwo{\PR(u^{n-1}) -  u^{n-1}}{ G_h^n } - \Ltwo{\PR(u^{n}) -  u^{n} }{ G_h^n }\\
\nonumber&\enspace& \quad
+ \ci \int_{I_n} \langle L(u)(\cdot,t) - L( \frac{u^{n}+u^{n-1}}{2} ) ,G_h^n \rangle \hspace{2pt} dt
\\
\nonumber&\enspace& \quad
+ \ci \langle \int_{I_n} \hat{f}(u(\cdot,t)) \hspace{2pt} dt - \deltat{n} \hat{f}( \frac{\PR(u^{n}) +\PR(u^{n-1})}{2} ), G_h^n \rangle_{L^2({\mathcal{D}})}\\
\nonumber&\enspace& \quad
+ \deltat{n} \hspace{2pt} \ci \hspace{2pt}
 \Ltwo{  \hat{f}( {\frac{\PR(u^{n}) +\PR(u^{n-1})}{2}} ) - \hat{f}({\frac{U^{n}+U^{n-1}}{2}}) }{G_h^n}.
\hspace{100pt}
\end{eqnarray}
Using that $\Re\langle L(E_h^{n} + E_h^{n-1}), E_h^{n}- E_h^{n-1} \rangle=\energy{ E_h^{n}}^2 - \energy{ E_h^{n-1}}^2$ and that
%
%
%
$$\Re \Ltwo{ v }{ G_h^n } = \Re \langle L( E_h^{n}+ E_h^{n-1} ) , P_{L^2}(v) \rangle \qquad \mbox{for all } v \in H^1_0({\mathcal{D}})$$
and taking the real part of equation (\ref{energy-identity-proof-step-1}) yields
\begin{eqnarray}
\nonumber\lefteqn{\energy{ E_h^{n}}^2 = \energy{ E_h^{n-1}}^2 + \Re \langle L( E_h^{n} + E_h^{n-1} ), P_{L^2}( \xi_n^{(1)} + \xi_n^{(2)} + \xi_n^{(3)} ) \rangle}\\
\nonumber&+& \Re \left( \ci \langle L( E_h^{n}+ E_h^{n-1} ), \int_{I_n} P_{L^2}( \triangleL u(\cdot,t) ) -   \frac{P_{L^2} (\triangleL u^{n})+ P_{L^2} (\triangleL u^{n-1})}{2}  \hspace{2pt} dt
\rangle \right)
\end{eqnarray}
and finishes the proof.
\end{proof}

The next lemma is central for estimating the $\hat{f}$-terms in the error identities.
\begin{lemma}
Recall the constant $M$ from \eqref{def-of-M} 
and let
$f(z):=|z|^2 z$. It holds (a.e. in $\mathcal{D}$)
\begin{align}\label{estimate-xi-3-II-step-1-NEW}
\left| \int_{t_{n-1}}^{t_n} f(u(\cdot,t)) \hspace{2pt} dt - \deltat{n}  f\left( \frac{u^{n}+u^{n-1}}{2} \right)\right|
\le \deltat{n} M \left( \frac{3}{4} |u^{n}-u^{n-1}|^2 + \deltat{n}^2 M \| u \|_{W^{2,\infty}(I_n)} \right).
\end{align}
and
\begin{eqnarray}\label{estimate-xi-3-II-step-1-grad-NEW}
\lefteqn{\left| \nabla \left(\int_{t_{n-1}}^{t_n} f(u(\cdot,t)) \hspace{2pt} dt - \deltat{n}  f\left( \frac{u^{n}+u^{n-1}}{2} \right) \right)\right|}\\
\nonumber&\le& 4 \deltat{n} M \left(|u^{n}-u^{n-1}|^2 + |\nabla u^{n}-\nabla u^{n-1}|^2\right)  + \deltat{n}^3 M^2 \left( \| u \|_{W^{2,\infty}(I_n)} + \| \nabla u \|_{W^{2,\infty}(I_n)} \right).
\end{eqnarray}
\end{lemma}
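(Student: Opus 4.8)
The plan is to treat both inequalities as instances of a single quadrature-versus-nonlinearity comparison. Write $\bar u:=(u^{n}+u^{n-1})/2$ and observe that $\tau_n f(\bar u)$ is $f$ evaluated at the \emph{endpoint average} of $u$, not at the true midpoint value. Since $f(z)=|z|^2 z$ is a homogeneous cubic polynomial in $(\Re z,\Im z)$, Taylor's formula about $\bar u$ terminates exactly at third order with constant third derivative, so for a.e.\ fixed $\mathbf{x}$ and every $t\in I_n$
\[
f(u(t))-f(\bar u)=Df(\bar u)[u(t)-\bar u]+\tfrac12 D^2 f(\bar u)[u(t)-\bar u]^2+\tfrac16 D^3 f[u(t)-\bar u]^3 .
\]
I would integrate this identity over $I_n$ and estimate the three contributions separately, using the pointwise derivative bounds $|Df(\bar u)|\le 3M^2$, $|D^2 f(\bar u)|\le 6M$ and $|D^3 f|\le 6$, which hold because $|\bar u|\le\|u\|_{L^\infty}\le M$.

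For the linear term the key is \emph{not} to bound crudely but to exploit the symmetry of $\bar u$: since $\int_{I_n}(u(t)-\bar u)\,dt=\int_{I_n}u\,dt-\tau_n\tfrac{u^{n}+u^{n-1}}{2}$ is exactly the trapezoidal-rule error, it is of order $\tau_n^3\|u\|_{W^{2,\infty}(I_n)}$, and multiplying by $|Df(\bar u)|\le 3M^2$ produces the second term $\tau_n^3 M^2\|u\|_{W^{2,\infty}(I_n)}$. For the quadratic and cubic terms the crucial ingredient is the representation, with $t_{n-1/2}:=(t_{n-1}+t_n)/2$,
\[
u(t)-\bar u=\frac{t-t_{n-1/2}}{\tau_n}\,(u^{n}-u^{n-1})+r(t),\qquad |r(t)|\le C\tau_n^2\|u\|_{W^{2,\infty}(I_n)},
\]
obtained by Taylor-expanding $u$ about $t_{n-1/2}$ (the first-derivative contributions to $\bar u$ cancel by symmetry) and replacing $u'(t_{n-1/2})$ by $\tau_n^{-1}(u^{n}-u^{n-1})$ up to an $O(\tau_n^2\|u\|_{W^{2,\infty}})$ remainder. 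Substituting this and using $\int_{I_n}(t-t_{n-1/2})^2\,dt=\tau_n^3/12$ turns the quadratic and cubic terms into $\tfrac34\tau_n M|u^{n}-u^{n-1}|^2$ plus remainders absorbed into $\tau_n^3 M^2\|u\|_{W^{2,\infty}(I_n)}$; here one uses $|u^{n}-u^{n-1}|\le\tau_n M$ to convert the leftover $|u^{n}-u^{n-1}|^3$ contribution into $\tau_n M|u^{n}-u^{n-1}|^2$.

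The gradient estimate follows the same scheme applied to $\nabla f(u(t))=Df(u(t))\,\nabla u(t)$, i.e.\ I compare $\int_{I_n}Df(u)\nabla u\,dt$ with $\tau_n Df(\bar u)\,\tfrac{\nabla u^{n}+\nabla u^{n-1}}{2}$ by Taylor-expanding the vector field $\Phi(v,p):=Df(v)p$ about $\big(\bar u,\tfrac{\nabla u^{n}+\nabla u^{n-1}}{2}\big)$. The linear term again reduces to trapezoidal errors of $u$ and of $\nabla u$ and, since $|D\Phi|\lesssim M^2$, yields $\tau_n^3 M^2(\|u\|_{W^{2,\infty}(I_n)}+\|\nabla u\|_{W^{2,\infty}(I_n)})$; the quadratic term, with $|D^2\Phi|\lesssim M$ and $|(u,\nabla u)(t)-\text{average}|^2\lesssim|u^{n}-u^{n-1}|^2+|\nabla u^{n}-\nabla u^{n-1}|^2+O(\tau_n^4)$, produces the first term with the tracked constant $4$.

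I expect the main obstacle to be the quadratic and cubic terms: one must extract the increments $u^{n}-u^{n-1}$ and $\nabla u^{n}-\nabla u^{n-1}$ from $u(t)-\bar u$ (and its gradient) up to genuinely higher-order $O(\tau_n^2)$ remainders, so that these terms scale like $|u^{n}-u^{n-1}|^2$ rather than the weaker $\tau_n^2 M^2$, and to reach the explicit constants $3/4$ and $4$ one needs the sharp derivative bounds for the cubic together with the exact quadrature constants. The linear term is delicate only insofar as the symmetric (trapezoidal) cancellation must be used; a term-by-term bound there would lose a power of $\tau_n$.
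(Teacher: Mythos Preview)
Your approach is valid but takes a genuinely different route from the paper's. The paper does \emph{not} Taylor-expand $f$ about $\bar u$. Instead it inserts the intermediate trapezoidal value $\tfrac{1}{2}\bigl(f(u^{n})+f(u^{n-1})\bigr)$ and splits the error into two pieces: (i) the trapezoidal-rule error for $t\mapsto f(u(\cdot,t))$, which directly gives the $\tau_n^3 M^2\|u\|_{W^{2,\infty}(I_n)}$ term, and (ii) the purely algebraic discrepancy $\tfrac{1}{2}\bigl(f(u^{n})+f(u^{n-1})\bigr)-f(\bar u)$. For (ii) the paper parametrises by $\zeta_n(s)=(1-s)u^{n-1}+su^{n}$ and recognises this discrepancy as the difference between the trapezoidal and the midpoint rule applied to $f\circ\zeta_n$ on $[0,1]$; the combined quadrature constant $\tfrac{1}{12}+\tfrac{1}{24}=\tfrac{1}{8}$ together with $\|(f\circ\zeta_n)''\|_{L^\infty(0,1)}\le 6M|u^{n}-u^{n-1}|^2$ yields the clean $\tfrac{3}{4}M|u^{n}-u^{n-1}|^2$. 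The gradient estimate is handled the same way, with $\zeta_n$ augmented by the linear interpolation of $\nabla u$.

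Your Taylor-expansion route is more systematic (and would extend to general smooth nonlinearities), but it comes with a bookkeeping cost the paper's trick avoids. When you substitute $u(t)-\bar u=\tfrac{t-t_{n-1/2}}{\tau_n}(u^{n}-u^{n-1})+r(t)$ into the quadratic and cubic Taylor terms, the pure $r$-contributions produce remainders of the form $M\tau_n^{5}\|u\|_{W^{2,\infty}(I_n)}^2$ (and similarly with $\nabla u$). These cannot be absorbed into $\tau_n^3 M^2\|u\|_{W^{2,\infty}(I_n)}$ without an auxiliary smallness condition such as $\tau_n^2\|u\|_{W^{2,\infty}(I_n)}\le M$, which the lemma does not assume. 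In the paper's overall argument $\tau_n$ is small, so this is a minor point rather than a real obstruction; but if you want the stated inequality with the stated constants and no extra hypothesis, the paper's intermediate insertion is the cleaner device.
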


\begin{proof}
We decompose the error under considerations into
\begin{eqnarray}
\label{errorsplit-for-quad}\lefteqn{ \left(\int_{t_{n-1}}^{t_n} f(u(\cdot,t)) \hspace{2pt} dt - \deltat{n}  f\left( \frac{u^{n}+u^{n-1}}{2} \right) \right) }\\
\nonumber&=&\left( \int_{t_{n-1}}^{t_n} f(u(x,t)) \hspace{2pt} dt - \deltat{n} \frac{f(u^n) + f(u^{n-1})}{2} \right) + \deltat{n} \left( \frac{f(u^{n}) +f(u^{n-1})}{2}  - f\left( \frac{u^{n}+u^{n-1}}{2} \right)\right).
\end{eqnarray}
With $f(u)=|u|^2u$, the first term in \eqref{errorsplit-for-quad} can be estimate using the trapezoidal-rule to obtain
\begin{align}
\nonumber\left| \int_{t_{n-1}}^{t_n} |u|^2 u - \deltat{n} \frac{|u^n|^2 u^n + |u^{n-1}|^2 u^{n-1}}{2} \right| 
&\le \frac{1}{4} \deltat{n}^3 \| u \|_{L^{\infty}(I_n)} \left( 2 \| \partial_t u \|_{L^{\infty}(I_n)}^2
+ \| \partial_{tt} u \|_{L^{\infty}(I_n)} \| u \|_{L^{\infty}(I_n)} \right) \\
\label{estimate-xi-3-II-step-1-trap}&\le \deltat{n}^3 M^2 \| u \|_{W^{2,\infty}(I_n)}.
\end{align}
For the second term in \eqref{errorsplit-for-quad}, let $\zeta_n: [0,1] \rightarrow [u^{n-1},u^{n}]$ denote the complex valued (linear) curve given by
$\zeta_n(s):=(1-s) u^{n-1} + s u^{n}$ for $s \in [0,1]$. We have $\zeta_n^{\prime}(z)=u^{n}-u^{n-1}$ (and $\zeta_n^{\prime\prime}=0$). With that, we get with the trapezoidal-rule and the midpoint rule that
\begin{eqnarray}
\nonumber\lefteqn{\left| \frac{f(u^{n}) +f(u^{n-1})}{2}  - f\left( \frac{u^{n}+u^{n-1}}{2} \right)\right|}\\
\nonumber&\le& \left|  \frac{(f\circ\zeta_n)(0) +(f\circ\zeta_n)(1)}{2}  
- \int_{0}^{1} (f \circ \zeta_n)(s) \hspace{2pt} ds 
\right| + 
\left|  \int_{0}^{1} (f \circ \zeta_n)(s) \hspace{2pt} ds  - f\left( \frac{\zeta_n(0)+\zeta_n(1)}{2} \right) \right|\\
\nonumber&\le& 
\frac{1}{12} \| (f \circ \zeta_n)^{\prime \prime} \|_{L^{\infty}(0,1)} + \frac{1}{24} \| (f \circ \zeta_n)^{\prime \prime} \|_{L^{\infty}(0,1)} =  \frac{1}{8} \| (f \circ \zeta_n)^{\prime \prime} \|_{L^{\infty}(0,1)}\\
\label{estimate-xi-3-II-step-1-prim}&\le& 
\frac{3}{4} |u^{n}-u^{n-1}|^2  \| \zeta_n \|_{L^{\infty}(0,1)} 
\le \frac{3}{4} |u^{n}-u^{n-1}|^2  \| u \|_{L^{\infty}(I_n)}
\end{eqnarray}
Combining the estimates \eqref{estimate-xi-3-II-step-1-trap} and \eqref{estimate-xi-3-II-step-1-prim} with 
\eqref{errorsplit-for-quad} finishes the proof of \eqref{estimate-xi-3-II-step-1-NEW}. Estimate \eqref{estimate-xi-3-II-step-1-trap} can be derived analogously by applying trapezoidal-rule and midpoint rule to the function 
$g(s):=2 |(1-s) u^{n-1} + s u^{n}|^2 ((1-s) \nabla u^{n-1} + s \nabla u^{n}) + ((1-s) u^{n-1} + s u^{n})^2 \overline{(1-s) \nabla u^{n-1} + s \nabla u^{n}}$.
\end{proof}

\begin{lemma}[$L^2$-error estimate for $E_h^{n}$]
\label{L2-estimate-step-n}
Consider $n\ge1$ and $E_h^{n}=U^{n} - \PR(u^{n})$.
Let $M$ denote the constant in Lemma \ref{makridakis-lemma}.
There exists a constant $C_M$ that only depends on $M$, $\mathcal{D}$, $\kappa$, $\beta$, $C_{W^{1,\infty}}$ and $C_1$ (cf. the $L^2$-estimate \eqref{estimate-ritz-projection}) such that for all $\deltat{n} < (2C_M)^{-1}$ it holds
\begin{eqnarray}
\label{prelim-error-estimate}
\lefteqn{\| E_h^{n} \|_{L^2({\mathcal{D}})}^2
\le \frac{(1+ C_M \deltat{n} )}{(1- C_M \deltat{n} ) }\| E_h^{n-1} \|_{L^2({\mathcal{D}})}^2 + C_M \| h^2 \partial_t u \|_{L^2(I_n,H^2({\mathcal{D}}))}^2}\\
\nonumber&\enspace& \quad +
C_M \deltat{n} \left( \| \deltat{}^2 \triangleL(\partial_{tt} u) \|_{L^{\infty}(I_n,L^2({\mathcal{D}}))}^2
+  \| \deltat{}^2 u  \|_{W^{2,\infty}(I_n,L^2(\mathcal{D}))}^2 + \| h^2 u \|_{L^{\infty}(I_n,H^2({\mathcal{D}}))}^2 \right).
\end{eqnarray}
\end{lemma}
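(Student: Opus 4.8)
The plan is to start from the $L^2$-error identity \eqref{error-identity} and bound each of the four contributions $\xi_n^{(1)},\ldots,\xi_n^{(4)}$ tested against $E_h^{n}+E_h^{n-1}$. For each of them I would apply Cauchy--Schwarz followed by Young's inequality with a weight tuned to $\deltat{n}$, i.e. $|\Ltwo{\xi}{E_h^{n}+E_h^{n-1}}| \le \frac{1}{2\deltat{n}}\|\xi\|_{L^2(\D)}^2 + \frac{\deltat{n}}{2}\|E_h^{n}+E_h^{n-1}\|_{L^2(\D)}^2$, so that every $\xi$ splits into a data contribution $\deltat{n}^{-1}\|\xi\|^2$ plus a uniform remainder $\deltat{n}\|E_h^{n}+E_h^{n-1}\|^2$. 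After using $\|E_h^{n}+E_h^{n-1}\|^2\le 2\|E_h^{n}\|^2+2\|E_h^{n-1}\|^2$ and absorbing the $\deltat{n}\|E_h^{n}\|^2$-part on the left (legitimate once $\deltat{n}<(2C_M)^{-1}$, since then $1-C_M\deltat{n}>0$), the identity turns into the asserted one-step recursion with factor $\frac{1+C_M\deltat{n}}{1-C_M\deltat{n}}$. The whole task thus reduces to showing that $\deltat{n}^{-1}\|\xi_n^{(i)}\|_{L^2(\D)}^2$ (and the analogous quantity with $L$ for $\xi_n^{(4)}$) is controlled by the data terms on the right-hand side.

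For $\xi_n^{(1)}$ I would use that it is the time-integral of the Ritz error of $\partial_t u$: Cauchy--Schwarz in time gives $\|\xi_n^{(1)}\|_{L^2(\D)}^2 \le \deltat{n}\int_{I_n}\|\PR(\partial_t u)-\partial_t u\|_{L^2(\D)}^2\,dt$, and \eqref{estimate-ritz-projection} then yields $\deltat{n}^{-1}\|\xi_n^{(1)}\|^2 \lesssim \|h^2\partial_t u\|_{L^2(I_n,H^2(\D))}^2$, which is precisely the $\deltat{n}$-free data term. The term $\xi_n^{(2)}$ is the one producing the Gr\"onwall factor: since its argument difference equals $-\tfrac12(E_h^{n}+E_h^{n-1})$, the Lipschitz bound \eqref{f_M_cond_3} for $f_M$ together with boundedness of $\kappa$ gives $\|\xi_n^{(2)}\|_{L^2(\D)} \lesssim \deltat{n}(\|\kappa\|_{L^\infty}+\beta M^2)\|E_h^{n}+E_h^{n-1}\|_{L^2(\D)}$, so its entire contribution is of the form $\deltat{n}\|E_h^{n}+E_h^{n-1}\|^2$. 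Finally $\xi_n^{(4)}$ enters through $\langle L(\xi_n^{(4)}),\cdot\rangle$; using \eqref{green-for-nablaL} I rewrite this as $\Ltwo{\triangleL\xi_n^{(4)}}{E_h^{n}+E_h^{n-1}}$ and observe that $\triangleL\xi_n^{(4)}=\ci(\int_{I_n}\triangleL u\,dt-\deltat{n}\triangleL\tfrac{u^{n}+u^{n-1}}{2})$ is exactly the trapezoidal-rule remainder of $\triangleL u$, hence of size $O(\deltat{n}^3\triangleL\partial_{tt}u)$; weighted Young then produces the $\|\deltat{}^2\triangleL(\partial_{tt}u)\|_{L^\infty(I_n,L^2(\D))}^2$ term.

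The delicate step is $\xi_n^{(3)}$, the nonlinear consistency error, which I expect to be the main obstacle. I would split it as $\int_{I_n}\hat f(u)\,dt-\deltat{n}\hat f(\tfrac{u^{n}+u^{n-1}}{2})$ (a pure quadrature error) plus $\deltat{n}(\hat f(\tfrac{u^{n}+u^{n-1}}{2})-\hat f(\tfrac{\PR(u^{n})+\PR(u^{n-1})}{2}))$ (a projection error, using linearity $\tfrac{\PR(u^{n})+\PR(u^{n-1})}{2}=\PR(\tfrac{u^{n}+u^{n-1}}{2})$). For the quadrature part the key is to remove the cap: since $M\ge\|u\|_{L^\infty(I_n\times\D)}$ one has $f_M(u)=f(u)$ pointwise, and the definition of $M$ together with the $W^{1,\infty}$-stability (A7) guarantees $\|\PR(u(\cdot,t))\|_{L^\infty(\D)}\le \mathrm{diam}(\D)\,C_{W^{1,\infty}}\|\nabla u\|_{L^\infty(I_n\times\D)}\le M$, so $f_M$ coincides with $f(z)=|z|^2z$ on every argument occurring here. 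This is exactly what lets me invoke the quadrature estimate \eqref{estimate-xi-3-II-step-1-NEW} for the $\beta f$-part (the linear $\kappa$-part being a plain trapezoidal remainder), bounding increments via $|u^{n}-u^{n-1}|\le\deltat{n}\|\partial_t u\|_{L^\infty(I_n)}$; both contributions are $O(\deltat{n}^3)$ and after the weighted Young feed the $\|\deltat{}^2 u\|_{W^{2,\infty}(I_n,L^2(\D))}^2$ term. The projection part is handled by \eqref{f_M_cond_3} and the Ritz estimate \eqref{estimate-ritz-projection}, giving the $\|h^2 u\|_{L^\infty(I_n,H^2(\D))}^2$ term. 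Collecting the four contributions and carrying out the absorption described above yields the stated recursion; the only genuine care needed is to verify the cap-removal hypotheses for $\xi_n^{(3)}$ and to keep the powers of $\deltat{n}$ aligned so that each consistency term lands with the correct $\deltat{}^2$ weight.
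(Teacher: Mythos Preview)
Your proposal is correct and follows essentially the same route as the paper: start from the $L^2$-error identity \eqref{error-identity}, bound $\xi_n^{(1)}$ via the Ritz estimate \eqref{estimate-ritz-projection}, $\xi_n^{(2)}$ via the Lipschitz property \eqref{f_M_cond_3}, $\xi_n^{(4)}$ via \eqref{green-for-nablaL} plus the trapezoidal remainder, and $\xi_n^{(3)}$ by removing the cap (using (A7) and the definition of $M$) and then splitting into a quadrature part handled by \eqref{estimate-xi-3-II-step-1-NEW} and a projection part handled by \eqref{estimate-ritz-projection}; the final absorption via Young's inequality is identical in spirit. The only cosmetic difference is that the paper applies Young with slightly different weights term by term rather than your uniform $\tfrac{1}{2\deltat{n}}\|\xi\|^2+\tfrac{\deltat{n}}{2}\|E_h^{n}+E_h^{n-1}\|^2$ splitting, but the resulting bounds and the one-step recursion coincide.
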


\begin{proof}
In the following $C_M$ denotes any constant that depends generically on $M$, $\mathcal{D}$, $\kappa$, $\beta$, $C_{W^{1,\infty}}$ and $C_1$.
We estimate the terms on the right of side of the error identity (\ref{error-splitting}) and start with $\Re \left( \Ltwo{ \xi_n^{(1)} }{ E_h^{n}+ E_h^{n-1} } \right)$. We obtain
\begin{eqnarray}
\nonumber\lefteqn{\frac{| \Re \left( \Ltwo{ \xi_n^{(1)} }{ E_h^{n}+ E_h^{n-1} } \right) |}{ \| E_h^{n}+ E_h^{n-1} \|_{L^2({\mathcal{D}})}} \le \| \xi_n^{(1)} \|_{L^2({\mathcal{D}})}}\\
\nonumber&=&  \| \int_{I_n} \PR( \partial_t u(\cdot,t)) -  \partial_t u(\cdot,t) \hspace{2pt} dt \|_{L^2({\mathcal{D}})}
\le \int_{I_n} \| \PR( \partial_t u(\cdot,t)) -  \partial_t u(\cdot,t) \|_{L^2({\mathcal{D}})}  \hspace{2pt} dt \\
\label{est-proof-step-1}&\overset{(\ref{estimate-ritz-projection})}{\le}& C_1 \int_{I_n} \| h^2 \partial_t u(\cdot,t)) \|_{H^2({\mathcal{D}})} \hspace{2pt} dt
\le C_1 \deltat{n}^{1/2} \| h^2 \partial_t u \|_{L^2(I_n,H^2({\mathcal{D}}))}.
\end{eqnarray}
Hence
\begin{align*}
| \Re \left( \Ltwo{ \xi_n^{(1)} }{ {E_h^{n}+ E_h^{n-1}} } \right) | \le
2 \deltat{n} \left( \| E_h^{n} \|_{L^2({\mathcal{D}})}^2 + \| E_h^{n-1} \|_{L^2({\mathcal{D}})}^2 \right) + \frac{1}{4} C_1^2 \| h^2 \partial_t u \|_{L^2(I_n,H^2({\mathcal{D}}))}^2.
\end{align*}

Next we bound the term depending on $\xi_n^{(2)} =  \deltat{n} \hspace{2pt} \ci \hspace{2pt} \left( \hat{f}( {\frac{\PR(u^{n}) +\PR(u^{n-1})}{2}} ) - \hat{f}({\frac{U^{n}+U^{n-1}}{2}}) \right)$. Recalling that $E_h^{n}=U^{n} - \PR(u^{n})$ we obtain
\begin{align}
\nonumber| \Re \left( \Ltwo{ \xi_n^{(2)} }{ E_h^{n}+ E_h^{n-1} } \right) | &\le  \deltat{n}  \|
\hat{f}( {\frac{\PR(u^{n}) +\PR(u^{n-1})}{2}} ) - \hat{f}({\frac{U^{n}+U^{n-1}}{2}}) \|_{L^2({\mathcal{D}})} \| E_h^{n}+ E_h^{n-1} \|_{L^2({\mathcal{D}})}\\
\label{est-proof-step-2}&\overset{(\ref{f_M_cond_3})}{\le} \left( \| \kappa \|_{L^{\infty}({\mathcal{D}})} + {10 M^2} \beta\right) \deltat{n} \left( \| E_h^{n} \|_{L^2({\mathcal{D}})}^2 + \| E_h^{n-1} \|_{L^2({\mathcal{D}})}^2 \right).
\end{align}
Recall $\hat{f}(z)=\kappa z + \beta f_M(z)$. In order to treat $\xi_n^{(3)}=\ci ( \hat{f}(u) - \hat{f}\left( \frac{1}{2} (\PR(u^{n}) +\PR(u^{n-1}) \right) ,1)_{L^2(I_n)}$, we use that $f_{M}(z)=|z|^2 z$ for $|z|\le M$ and the facts that
$\| u \|_{L^{\infty}(I_n \times \mathcal{D})} \le M$ and
$\| P_h(u^n) \|_{L^{\infty}(\mathcal{D})} \le C_{W^{1,\infty}} \mbox{diam}(\mathcal{D}) \| \nabla u^n \|_{L^{\infty}(\mathcal{D})} \le M$ to conclude that
\begin{eqnarray}
\nonumber\lefteqn{\frac{| \Re \left( \Ltwo{ \xi_n^{(3)} }{ E_h^{n}+ E_h^{n-1} } \right) |}{ \| E_h^{n}+ E_h^{n-1} \|_{L^2({\mathcal{D}})}} \le \| \kappa\|_{L^{\infty}(\mathcal{D})} \| \int_{I_n} u(\cdot,t) - \frac{\PR(u^{n}) +\PR(u^{n-1})}{2} \hspace{2pt} dt
 \|_{L^2({\mathcal{D}})}}\\
\label{L2-bound-new-step-fM} &\enspace& \quad  + \beta  \|\int_{I_n} f(u(\cdot,t)) - f(\frac{\PR(u^{n}) +\PR(u^{n-1})}{2}) \hspace{2pt} dt  \|_{L^2({\mathcal{D}})},\hspace{100pt}
\end{eqnarray}
where $f(z):=|z|^2z$. To estimate this, we decompose $\|\int_{I_n} f(u(\cdot,t)) - f(\frac{\PR(u^{n}) +\PR(u^{n-1})}{2}) \hspace{2pt} dt  \|_{L^2({\mathcal{D}})}$ into
\begin{eqnarray*}
\underset{ \mbox{I}_{\xi_n^{(3)}} }{\underbrace{\| \int_{I_n} f(u(\cdot,t)) -  f\left( \frac{u^{n}+u^{n-1}}{2} \right) \hspace{2pt} dt \|_{L^2({\mathcal{D}})}}} + \underset{ \mbox{II}_{\xi_n^{(3)}} }{\underbrace{ \deltat{n} \| f\left( \frac{u^{n}+u^{n-1}}{2} \right) - f\left( \frac{\PR(u^{n}) +\PR(u^{n-1})}{2}\right) \|_{L^2({\mathcal{D}})}}}. 
\end{eqnarray*}
For the first term we use \eqref{estimate-xi-3-II-step-1-NEW} to get
\begin{align*}
|\mbox{I}_{\xi_n^{(3)}}| &\le \deltat{n}^3 M^2 \| u \|_{W^{2,\infty}(I_n,L^2(\mathcal{D}))}
+  \deltat{n} \frac{3}{4} M \| u^{n}-u^{n-1}  \|_{L^4(\mathcal{D})}^2\\
&=  \deltat{n}^3 M^2 \| u \|_{W^{2,\infty}(I_n,L^2(\mathcal{D}))} + \deltat{n} \frac{3}{4} M \| \int_{I_n} \partial_t u(\cdot,t) \hspace{2pt} dt \|_{L^4(\mathcal{D})}^2 \\
&\le
 \deltat{n}^3 M^2 \| u \|_{W^{2,\infty}(I_n,L^2(\mathcal{D}))} + \deltat{n}^3 C_M \| u  \|_{W^{1,\infty}(I_n,L^2(\mathcal{D}))}.
\end{align*}
For the term $\mbox{II}_{\xi_n^{(3)}}$ we get in the usual manner
\begin{eqnarray*}
|\mbox{III}_{\xi_n^{(3)}}| = \deltat{n} \| f\left( \frac{u^{n}+u^{n-1}}{2} \right) - f\left( \frac{\PR(u^{n}) +\PR(u^{n-1})}{2}\right) \|_{L^2({\mathcal{D}})}
&\overset{(\ref{estimate-ritz-projection})}{\le}&
\deltat{n} C_M \| h^2 u \|_{L^{\infty}{(I_n,H^2({\mathcal{D}}))}}.
\end{eqnarray*}
Combining the estimates for $\mbox{I}_{\xi_n^{(3)}}$ and $\mbox{II}_{\xi_n^{(3)}}$ with \eqref{L2-bound-new-step-fM} yields
\begin{eqnarray*}
\frac{| \Re \left( \Ltwo{ \xi_n^{(3)} }{ E_h^{n}+ E_h^{n-1} } \right) |}{ \| E_h^{n}+ E_h^{n-1} \|_{L^2({\mathcal{D}})}}
&\le& C_{M}  \deltat{n}  \left(  \| \deltat{}^2 u  \|_{W^{2,\infty}(I_n,L^2(\mathcal{D}))} + \| h^2 u \|_{L^{\infty}{(I_n,H^2({\mathcal{D}}))}} \right)
\end{eqnarray*}
and hence the final estimate for the $\xi_n^{(3)}$-term
\begin{eqnarray}
\label{est-proof-step-3}
\lefteqn{| \Re \left( \Ltwo{ \xi_n^{(3)} }{ E_h^{n}+ E_h^{n-1} } \right) |}\\
&\le&
\nonumber \deltat{n} \left( \| E_h^{n} \|_{L^2({\mathcal{D}})}^2 + \| E_h^{n-1} \|_{L^2({\mathcal{D}})}^2 \right) +
\deltat{n} C_M \left(  \| \deltat{}^2 u  \|_{W^{2,\infty}(I_n,L^2(\mathcal{D}))} + \| h^2 u \|_{L^{\infty}{(I_n,H^2({\mathcal{D}}))}} \right)^2.
\end{eqnarray}
Next, we bound the term $\langle L(\xi_n^{(4)}) , E_h^{n} \rangle$. It holds
\begin{eqnarray}
\label{est-proof-step-4}
\lefteqn{
\frac{| \Re \langle L(\xi_n^{(4)}) , E_h^{n}+ E_h^{n-1} \rangle |}{ \| E_h^{n}+ E_h^{n-1} \|_{L^2({\mathcal{D}})}}
\overset{(\ref{green-for-nablaL})}{\le}
 \| \triangleL( \xi_n^{(4)} ) \|_{L^2({\mathcal{D}})}
}\\
\nonumber&=& \| \int_{I_n} \triangleL( \frac{u^{n}+u^{n-1}}{2} - u(\cdot,t) ) \hspace{2pt} dt \|_{L^2({\mathcal{D}})}
\le \frac{1}{12} \deltat{n}^3  \| \triangleL(\partial_{tt} u) \|_{L^{\infty}(I_n,L^2({\mathcal{D}}))}.
\end{eqnarray}
Combining the estimates (\ref{est-proof-step-1})-(\ref{est-proof-step-4}) with the error identity (\ref{error-identity})
proves the lemma.
\end{proof}

Recall that according to Lemma \ref{norm-equivalence} and the Poincar\'e-Friedrichs inequality there exist positive constants $c_E$ and $C_E$ such that
\begin{align}
\label{constants-in-norm-equivalence}c_E \| \nabla v \|_{L^2({\mathcal{D}})}^2 \le \| v \|_{E({\mathcal{D}})}^2 \le C_E  \| \nabla v \|_{L^2({\mathcal{D}})}^2 \qquad \mbox{for all } v \in H^1_0({\mathcal{D}}).
\end{align}

\begin{lemma}[Energy-error estimate for $E_h^{n}$]
\label{energy-estimate-step-n}
Consider $n\ge1$ and $E_h^{n}=U^{n} - \PR(u^{n})$.
Let $M$ denote the constant in Lemma \ref{makridakis-lemma}.
There exists a constant $C_M$ that only depends on $M$, $\mathcal{D}$, $d$, the data $L$, $\kappa$, $\beta$, the norm-equivalence constants $C_E$ and $c_E$ and the stability constants $C_{W^{1,\infty}}$, $C_{L^2}$ and $C_1$ such that for all $\deltat{n} < (2C_M)^{-1}$ it holds
\begin{eqnarray}
\label{error-enrgy-estimate-prelim}
\lefteqn{\energy{ E_h^{n}}^2
\le \frac{(1+ C_M \deltat{n} )}{(1- C_M \deltat{n} ) }\energy{ E_h^{n-1}}^2 + C_M \| h \partial_t u \|_{L^2(I_n,H^2({\mathcal{D}}))}^2}\\
\nonumber&\enspace& \quad +
C_M \deltat{n} \left( \| \deltat{}^2 \triangleL(\partial_{tt} u) \|_{L^{\infty}(I_n,H^1({\mathcal{D}}))}^2
+  \| \deltat{}^2 u  \|_{W^{2,\infty}(I_n,H^1(\mathcal{D}))}^2 + \| h u \|_{L^{\infty}(I_n,H^2({\mathcal{D}}))}^2 \right).
\end{eqnarray}
\end{lemma}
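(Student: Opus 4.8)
The plan is to run the argument of Lemma~\ref{L2-estimate-step-n} again, but starting from the energy-norm identity \eqref{error-identity-enrgy} rather than the $L^2$-identity \eqref{error-identity}, and replacing every $L^2$-estimate of the contributions $\xi_n^{(i)}$ by an $H^1$-estimate. The mechanism that makes this possible is the following: for any $v\in H^1_0(\mathcal{D})$ we have $P_{L^2}(v)\in S_h\subset H^1_0(\mathcal{D})$, so Conclusion~\ref{ellipticity-of-L} turns the dual pairing into the $E$-product,
\[
\Re\langle L(E_h^{n}+E_h^{n-1}),P_{L^2}(v)\rangle=\Re\,(E_h^{n}+E_h^{n-1},P_{L^2}(v))_{E(\mathcal{D})}\le \energy{E_h^{n}+E_h^{n-1}}\,\energy{P_{L^2}(v)},
\]
and the norm equivalence of Lemma~\ref{norm-equivalence} together with the $H^1$-stability \eqref{H1-stability-L2-projection} of $P_{L^2}$ gives $\energy{P_{L^2}(v)}\le C\,C_{L^2}\|v\|_{H^1(\mathcal{D})}$. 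Hence each term in \eqref{error-identity-enrgy} is bounded by $\energy{E_h^{n}+E_h^{n-1}}$ times the $H^1$-norm of the corresponding contribution, after which a Young inequality peels off an absorbable multiple of $\deltat{n}\,(\energy{E_h^n}^2+\energy{E_h^{n-1}}^2)$ and leaves the data term. Note that the half power $\deltat{n}^{1/2}$ appearing in the $\xi_n^{(1)}$-bound is grouped with the error factor in this Young step, which is why its data term enters without a $\deltat{n}$-weight, exactly as in \eqref{error-enrgy-estimate-prelim}.

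For $\xi_n^{(1)}=\int_{I_n}\PR(\partial_t u)-\partial_t u\,dt$ I would use the energy estimate of the Ritz projection in \eqref{estimate-ritz-projection} and norm equivalence to get $\|\PR(v)-v\|_{H^1(\mathcal{D})}\le C|hv|_{H^2(\mathcal{D})}$, integrate over $I_n$ and apply Cauchy--Schwarz in time, which yields $\|\xi_n^{(1)}\|_{H^1(\mathcal{D})}\le C\deltat{n}^{1/2}\|h\partial_t u\|_{L^2(I_n,H^2(\mathcal{D}))}$. For $\xi_n^{(2)}$ I would invoke the energy-Lipschitz property \eqref{f_M_cond_4} of $f_M$, which is applicable because $\|\tfrac12(\PR(u^{n})+\PR(u^{n-1}))\|_{W^{1,\infty}(\mathcal{D})}\le M$ by (A7) and the definition \eqref{def-of-M} of $M$; together with $\energy{\kappa w}\le C\|w\|_{H^1(\mathcal{D})}$ this gives $\|\xi_n^{(2)}\|_{H^1(\mathcal{D})}\le C\deltat{n}\,\energy{E_h^{n}+E_h^{n-1}}$, a fully absorbable term. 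The last term of \eqref{error-identity-enrgy} is the trapezoidal quadrature error of $t\mapsto P_{L^2}(\triangleL u(\cdot,t))$; since $P_{L^2}$ and $\triangleL$ act only in space, its $H^1$-norm is controlled by $C\deltat{n}^3\|P_{L^2}(\triangleL\partial_{tt}u)\|_{L^{\infty}(I_n,H^1(\mathcal{D}))}\le C\deltat{n}^3\|\triangleL\partial_{tt}u\|_{L^{\infty}(I_n,H^1(\mathcal{D}))}$ by \eqref{H1-stability-L2-projection}, producing the $\|\deltat{}^2\triangleL(\partial_{tt}u)\|_{L^{\infty}(I_n,H^1(\mathcal{D}))}^2$ contribution.

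The main obstacle is the $\xi_n^{(3)}$-term, which must now be measured in $H^1$ and which carries both the temporal quadrature error and the Ritz-projection error. I would split the cubic part as in \eqref{errorsplit-for-quad}: the quadrature part is treated by combining the gradient estimate \eqref{estimate-xi-3-II-step-1-grad-NEW} with the pointwise estimate \eqref{estimate-xi-3-II-step-1-NEW}, writing $u^{n}-u^{n-1}=\int_{I_n}\partial_t u\,dt$ (and likewise for the gradient) and using Sobolev embeddings to convert the nodal differences into $\deltat{n}$-weighted time-derivative norms, which yields the $\|\deltat{}^2 u\|_{W^{2,\infty}(I_n,H^1(\mathcal{D}))}^2$ term; the projection part is handled by applying \eqref{f_M_cond_4} to $f_M$ at the arguments $\tfrac12(u^{n}+u^{n-1})$ and $\tfrac12(\PR(u^{n})+\PR(u^{n-1}))$ and then the Ritz estimate \eqref{estimate-ritz-projection}, which yields the $\|h u\|_{L^{\infty}(I_n,H^2(\mathcal{D}))}^2$ term. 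The genuinely delicate point is the linear $\kappa$-contribution to $\xi_n^{(3)}$: controlling $\energy{\kappa w}$ by $\|w\|_{H^1(\mathcal{D})}$ requires differentiating the product, and the term $(\nabla\kappa)\,w$ lies in $L^2(\mathcal{D})$ only after combining $\nabla\kappa\in L^3(\mathcal{D})$ from (A2) with the Sobolev embedding $H^1(\mathcal{D})\hookrightarrow L^6(\mathcal{D})$, valid for $d\le 3$. This is precisely where the regularity $\kappa\in W^{1,3}(\mathcal{D})$ is indispensable, whereas in the $L^2$-proof of Lemma~\ref{L2-estimate-step-n} mere boundedness of $\kappa$ sufficed.

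Finally I would collect the four bounds, use Young's inequality so that all factors $\energy{E_h^{n}+E_h^{n-1}}^2$ on the right are replaced by $C_M\deltat{n}(\energy{E_h^n}^2+\energy{E_h^{n-1}}^2)$, move the $\energy{E_h^n}^2$ part to the left, and divide by $(1-C_M\deltat{n})$, which is admissible for $\deltat{n}<(2C_M)^{-1}$. This produces the geometric factor $(1+C_M\deltat{n})/(1-C_M\deltat{n})$ in front of $\energy{E_h^{n-1}}^2$ together with the stated data terms, i.e.\ \eqref{error-enrgy-estimate-prelim}.
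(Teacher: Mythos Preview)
Your proposal is correct and follows essentially the same route as the paper's proof: both start from the energy identity \eqref{error-identity-enrgy}, convert $\langle L(E_h^{n}+E_h^{n-1}),P_{L^2}(\cdot)\rangle$ into an $E$-inner product via Conclusion~\ref{ellipticity-of-L}, invoke the $H^1$-stability \eqref{H1-stability-L2-projection}, and then estimate $\xi_n^{(1)}$--$\xi_n^{(3)}$ and the trapezoidal term exactly as you describe, including the product-rule argument for $\nabla(\kappa v)$ that forces $\kappa\in W^{1,3}(\mathcal{D})$. The only cosmetic differences are that the paper places the $\kappa$--Sobolev step under the $\xi_n^{(2)}$ estimate rather than $\xi_n^{(3)}$ (it is needed for both), and for the projection part of $\xi_n^{(3)}$ the paper estimates $\nabla(f(z_1)-f(z_2))$ directly using $\|u^n\|_{W^{1,\infty}},\|\PR(u^n)\|_{W^{1,\infty}}\le M$ instead of invoking \eqref{f_M_cond_4}; your use of \eqref{f_M_cond_4} is equally valid since $f_M=f$ on the relevant range.
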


\begin{proof}
We proceed analogously to the proof of Lemma \ref{L2-estimate-step-n}.
Starting from the energy error identity (\ref{error-identity-enrgy}) we obtain the following estimates for the various terms. Using \eqref{equation-E-L} we get
\begin{eqnarray}
\label{est-energy-proof-step-1}\frac{| \Re \left(  \langle L( E_h^{n} + E_h^{n-1} ), P_{L^2}(\xi_n^{(1)}) \rangle \right) |}{ \energy{ E_h^{n} + E_h^{n-1} } } \overset{(\ref{H1-stability-L2-projection})}{\le} C_{L^2} \energy{ \xi_n^{(1)} }
\le C_{L^2} C_1 \deltat{n}^{1/2} \| h \partial_t u \|_{L^2(I_n,H^2({\mathcal{D}}))}.
\end{eqnarray}
Second, using $\| \PR(u^{n}) \|_{W^{1,\infty}({\mathcal{D}})} \le 
C_{W^{1,\infty}} (\mbox{diam}(\mathcal{D})+1) \| \nabla u^n \|_{L^{\infty}(\mathcal{D})} \le M$ (cf. (A7)) we get
\begin{eqnarray}
\nonumber\lefteqn{| \Re \left( \langle L( E_h^{n} + E_h^{n-1} ), P_{L^2}(\xi_n^{(2)}) \rangle \right) | }\\
\nonumber&\overset{(\ref{H1-stability-L2-projection})}{\le}& C_{L^2} \deltat{n}
 \energy{ \hat{f}( {\frac{\PR(u^{n}) +\PR(u^{n-1})}{2}} ) - \hat{f}({\frac{U^{n}+U^{n-1}}{2}}) }
\energy{ E_h^{n} + E_h^{n-1} }\\
\label{est-energy-proof-step-2}
\nonumber&\overset{(\ref{f_M_cond_3})}{\le}& C_{L^2} \left( c_{M} \beta \deltat{n}  (\energy{ E_h^{n} }^2 + \energy{ E_h^{n-1} }^2) + \deltat{n} \sum_{k=n-1}^n \energy{  \kappa (\PR(u^{k}) -  U^{k}) } \energy{ E_h^{n} + E_h^{n-1} } \right)\\
\nonumber&\overset{(\ref{constants-in-norm-equivalence})}{\le}&
 C_{L^2} \left( c_{M} \beta \deltat{n} (\energy{ E_h^{n} }^2 + \energy{ E_h^{n-1} }^2) \right.\\
&\enspace& \hspace{40pt} \left. + 2 C_{\mbox{\rm \tiny Sobolev}} \frac{\sqrt{C_E}}{\sqrt{c_E}}  \deltat{n} \| \kappa \|_{W^{1,3}({\mathcal{D}})}  (\energy{ E_h^{n} }^2 + \energy{ E_h^{n-1} }^2) \right).
\end{eqnarray}
In the last step we also used the following inequality (based on Sobolev embeddings) which holds for any $v \in H^1({\mathcal{D}})$
\begin{align*}
 \| \nabla(\kappa v) \|_{L^2({\mathcal{D}})} &\le \| (\nabla \kappa) v) \|_{L^2({\mathcal{D}})} + \| \kappa (\nabla v) \|_{L^2({\mathcal{D}})}
\le \| \nabla \kappa \|_{L^3({\mathcal{D}})} \| v \|_{L^6({\mathcal{D}})} + \| \kappa \|_{L^{\infty}({\mathcal{D}})} \| v \|_{H^1({\mathcal{D}})} \\
&\le \left( C_{\mbox{\rm \tiny Sobolev}} \| \kappa \|_{W^{1,3}({\mathcal{D}})} + \| \kappa \|_{L^{\infty}({\mathcal{D}})} \right) \| v \|_{H^1({\mathcal{D}})}.
\end{align*}
For the $\xi_n^{(3)}$ we use again that
$\| u \|_{L^{\infty}(I_n \times \mathcal{D})} \le M$ and
$\| P_h(u^n) \|_{L^{\infty}(\mathcal{D})} \le M$ in combination with $f_{M}(z)=|z|^2 z$ for $|z|\le M$. This yields
\begin{eqnarray}
\label{pre-est-energy-proof-step-3}\lefteqn{\frac{| \Re \left(  \langle L( E_h^{n} + E_h^{n-1} ) , P_{L^2}(\xi_n^{(3)}) \rangle \right) |}{ \energy{ E_h^{n} + E_h^{n-1} }} \le
C_{L^2} \frac{\sqrt{C_E}}{\sqrt{c_E}} \| \nabla \xi_n^{(3)} \|_{L^2(\mathcal{D})}}\\
\nonumber&\le& 
\underset{ \mbox{I}_{\xi_n^{(3)}} }{\underbrace{ C_{L^2} \frac{\sqrt{C_E}}{\sqrt{c_E}} \| \nabla \left( \int_{I_n} \kappa (u(\cdot,t) - ( \frac{\PR(u^{n}) +\PR(u^{n-1})}{2} ) \hspace{2pt} dt \right) \|_{L^2(\mathcal{D})} }}\\
\nonumber&\enspace& + \underset{ \mbox{II}_{\xi_n^{(3)}} }{\underbrace{\beta C_{L^2} \frac{\sqrt{C_E}}{\sqrt{c_E}} \| \nabla \left( \int_{I_n} f(u(\cdot,t)) - f( \frac{\PR(u^{n}) +\PR(u^{n-1})}{2} ) \hspace{2pt} dt \right) \|_{L^2(\mathcal{D})}}}.
\end{eqnarray}
The regularity of $\kappa$ (and the fact that we can hide $\| \kappa \|_{W^{1,3}({\mathcal{D}})}$ and $\| \kappa \|_{L^{\infty}({\mathcal{D}})}$ in $C_M$) allows us to estimate the first term by $\mbox{I}_{\xi_n^{(3)}} \le C_M \deltat{n} (  \| \deltat{}^2 \partial_{tt} u \|_{L^{\infty}(I_n,H^1({\mathcal{D}}))} + \| h u \|_{L^{\infty} (I_n,H^2({\mathcal{D}}))})$.
For the second term we get
\begin{eqnarray*}
\lefteqn{ \mbox{II}_{\xi_n^{(3)}} \le \underset{ \mbox{i}_{\xi_n^{(3)}} }{\underbrace{\beta C_{L^2} \frac{\sqrt{C_E}}{\sqrt{c_E}} \| \nabla \left( \int_{I_n} f(u(\cdot,t)) - f( \frac{u^{n} + u^{n-1}}{2} ) \hspace{2pt} dt \right) \|_{L^2(\mathcal{D})}}} }\\
&\enspace& \quad + \underset{ \mbox{ii}_{\xi_n^{(3)}} }{\underbrace{\deltat{n} \beta C_{L^2} \frac{\sqrt{C_E}}{\sqrt{c_E}} \| \nabla \left(  f( \frac{u^{n} + u^{n-1}}{2} ) - f( \frac{\PR(u^{n}) +\PR(u^{n-1})}{2} ) \right) \|_{L^2(\mathcal{D})}}},
\end{eqnarray*}
where we can use \eqref{estimate-xi-3-II-step-1-grad-NEW} to obtain
\begin{eqnarray*}
\mbox{i}_{\xi_n^{(3)}} &\le& \deltat{n}^3 C_M \| u \|_{W^{1,\infty}(I_n,H^1(\mathcal{D}))} + \deltat{n}^3 C_M \| u \|_{W^{2,\infty}(I_n,H^1(\mathcal{D}))}
\end{eqnarray*}
and where we can use $\| u^{n} \|_{W^{1,\infty}(\mathcal{D})}$, $\| \PR(u^{n}) \|_{W^{1,\infty}(\mathcal{D})} \le M$ to get
\begin{eqnarray*}
\mbox{ii}_{\xi_n^{(3)}}
&\le& C_M \deltat{n} \| h u \|_{L^{\infty}(I_n,H^2({\mathcal{D}}))}.
\end{eqnarray*}
Combining the estimates for $\mbox{I}_{\xi_n^{(3)}}$, $\mbox{II}_{\xi_n^{(3)}}$, $\mbox{i}_{\xi_n^{(3)}}$ and $\mbox{ii}_{\xi_n^{(3)}}$ with \eqref{pre-est-energy-proof-step-3} yields
\begin{eqnarray}
\label{est-energy-proof-step-3}\frac{| \Re \left(  \langle L( E_h^{n} + E_h^{n-1} ) , P_{L^2}(\xi_n^{(3)}) \rangle \right) |}{ \energy{ E_h^{n} + E_h^{n-1} }} \le C_M \deltat{n} \left( \| \deltat{}^2 u \|_{W^{2,\infty}(I_n,H^1(\mathcal{D}))}
+ \| h u \|_{L^{\infty}(I_n,H^2({\mathcal{D}}))} \right).
\end{eqnarray}
For the last term in the error identity \eqref{error-identity-enrgy} we get
\begin{eqnarray}
\label{est-energy-proof-step-4}
\lefteqn{
\frac{| \Re \left( \ci \langle L( E_h^{n} + E_h^{n-1} ), \int_{I_n}  \frac{P_{L^2} (\triangleL u^{n})+ P_{L^2} (\triangleL u^{n-1})}{2} - P_{L^2}( \triangleL u(\cdot,t)) \hspace{2pt} dt
\rangle \right) |}{ \energy{ E_h^{n} + E_h^{n-1} } }}\\
\nonumber&\le&
\energy{ P_{L^2} \left( \int_{I_n} \frac{\triangleL u^{n} + \triangleL u^{n-1}}{2} - \triangleL u(\cdot,t) \hspace{2pt} dt \right) }
\le \deltat{n}^{3} C_{L^2} \| \triangleL( \partial_{tt} u) \|_{L^{\infty}(I_n,E({\mathcal{D}}))}.
\end{eqnarray}
Combining estimates \eqref{est-energy-proof-step-1}-\eqref{est-energy-proof-step-4} and plugging them into the error identity (\ref{error-identity-enrgy}) finishes the proof.
\end{proof}

\begin{lemma}[Full $L^2$-error estimate for $E_h^{n}$]
\label{L2-estimate-full-E-h-n}
We use the notation and the assumptions of Lemma \ref{L2-estimate-step-n}.
Then it holds
\begin{eqnarray*}
\lefteqn{\| E_h^{n} \|_{L^2({\mathcal{D}})}^2
\le e^{4 C_M t_n} \| E_h^{0} \|_{L^2({\mathcal{D}})}^2
+ C_M e^{4 C_M t_n} \| h^2 \partial_t u \|_{L^2(0,t_n,H^2({\mathcal{D}}))}^2} \\
&\enspace& \quad +
C_M e^{4 C_M t_n} \sum_{k=1}^n \deltat{k} \left( \| \deltat{}^2 \triangleL(\partial_{tt} u) \|_{L^{\infty}(I_k,L^2({\mathcal{D}}))}^2
+  \| \deltat{}^2 u  \|_{W^{2,\infty}(I_k,L^2(\mathcal{D}))}^2 + \| h^2 u \|_{L^{\infty}(I_k,H^2({\mathcal{D}}))}^2 \right).
\end{eqnarray*}
\end{lemma}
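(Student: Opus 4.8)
The plan is to iterate the one–step recursion of Lemma \ref{L2-estimate-step-n} by means of a discrete Gronwall argument. Writing $a_n := \| E_h^{n} \|_{L^2({\mathcal{D}})}^2$ and collecting the data-dependent terms on the right-hand side of \eqref{prelim-error-estimate} into
\[
b_n := C_M \| h^2 \partial_t u \|_{L^2(I_n,H^2({\mathcal{D}}))}^2 + C_M \deltat{n} \left( \| \deltat{}^2 \triangleL(\partial_{tt} u) \|_{L^{\infty}(I_n,L^2({\mathcal{D}}))}^2 + \| \deltat{}^2 u \|_{W^{2,\infty}(I_n,L^2(\mathcal{D}))}^2 + \| h^2 u \|_{L^{\infty}(I_n,H^2({\mathcal{D}}))}^2 \right),
\]
Lemma \ref{L2-estimate-step-n} reads $a_n \le q_n\, a_{n-1} + b_n$ with amplification factor $q_n := (1+C_M\deltat{n})/(1-C_M\deltat{n})$, valid whenever $\deltat{n} < (2C_M)^{-1}$.

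First I would control the amplification factor. Since $C_M \deltat{n} < 1/2$, the elementary bounds $1+x \le e^{x}$ and $(1-x)^{-1} \le e^{2x}$ (the latter valid for $0 \le x \le 1/2$, as one checks from $\tfrac{d}{dx}\,e^{2x}(1-x)=e^{2x}(1-2x)\ge 0$) give $q_n \le e^{3 C_M \deltat{n}} \le e^{4 C_M \deltat{n}}$, where the extra margin is merely convenient. Unrolling the recursion then yields
\[
a_n \le \left( \prod_{j=1}^{n} q_j \right) a_0 + \sum_{k=1}^{n} \left( \prod_{j=k+1}^{n} q_j \right) b_k \le e^{4 C_M t_n} a_0 + \sum_{k=1}^{n} e^{4 C_M (t_n - t_k)} b_k,
\]
where I used $\sum_{j=1}^{n} \deltat{j} = t_n$ and $\sum_{j=k+1}^{n} \deltat{j} = t_n - t_k$ to telescope the products of exponentials. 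Bounding every factor $e^{4 C_M(t_n - t_k)}$ crudely by $e^{4 C_M t_n}$ produces the prefactor appearing in the claim.

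It then remains to reorganize the summed data terms. The contributions $C_M \| h^2 \partial_t u \|_{L^2(I_k,H^2({\mathcal{D}}))}^2$ are additive over the disjoint intervals $I_k$, so $\sum_{k=1}^{n} \| h^2 \partial_t u \|_{L^2(I_k,H^2({\mathcal{D}}))}^2 = \| h^2 \partial_t u \|_{L^2(0,t_n,H^2({\mathcal{D}}))}^2$, which yields the second term in the statement; the remaining $\deltat{k}$-weighted contributions are kept as the sum $\sum_{k=1}^{n} \deltat{k}(\cdots)$, reproducing the last term. Combining these identities with the displayed bound for $a_n$ and pulling out the common factor $C_M e^{4 C_M t_n}$ finishes the proof.

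The argument is essentially bookkeeping; the only point requiring a little care is the uniform bound $q_n \le e^{4 C_M \deltat{n}}$ together with the telescoping of the products, which is precisely what makes the accumulated constant independent of the number of time steps $N$ (and hence of $T$, beyond the explicit factor $e^{4 C_M t_n}$). I expect no genuine obstacle here, since all the analytically substantial work has already been absorbed into Lemma \ref{L2-estimate-step-n}.
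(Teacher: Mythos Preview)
Your proposal is correct and follows essentially the same discrete Gronwall argument as the paper: the paper rewrites the amplification factor as $q_n = 1+\alpha_n$ with $\alpha_n = \frac{2C_M\deltat{n}}{1-C_M\deltat{n}}$, invokes the standard estimate $a_{n}\le e^{\sum_k\alpha_k}(a_0+\sum_k b_k)$, and then uses $\alpha_k \le 4C_M\deltat{k}$ (from $C_M\deltat{k}\le 1/2$) to obtain $\sum_k\alpha_k \le 4C_M t_n$. Your direct bound $q_n \le e^{4C_M\deltat{n}}$ via $(1-x)^{-1}\le e^{2x}$ and the explicit unrolling with telescoped products amount to the same computation with only cosmetic differences.
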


\begin{proof}
First we note that if $a_n,b_n,\alpha_n$ is a sequence of positive real numbers that is related via $a_{n+1} \le (1 + \alpha_n) a_n + b_n$ then it holds
\begin{align}
\label{std-est-exp}
a_{n+1} &\le
e^{\sum_{i=0}^n \alpha_i} \left( a_0 + \sum_{i=0}^n b_i \right).
\end{align}
Next we use equation (\ref{prelim-error-estimate}) to obtain
\begin{eqnarray}
\label{est-step-new-1}\| E_h^{n} \|_{L^2({\mathcal{D}})}^2 &\le& (1 + \alpha_n ) \| E_h^{n-1} \|_{L^2({\mathcal{D}})}^2 + b_n,
\end{eqnarray}
where $\alpha_{n}:= \frac{ 2 C_M \deltat{n} }{(1- C_M \deltat{n} ) }$
and
\begin{align*}
b_n &:=  C_M \| h^2 \partial_t u \|_{L^2(I_n,H^2({\mathcal{D}}))}^2\\
&\enspace \quad +
C_M \deltat{n} \left( \| \deltat{}^2 \triangleL(\partial_{tt} u) \|_{L^{\infty}(I_n,L^2({\mathcal{D}}))}^2
+  \| \deltat{}^2 u  \|_{W^{2,\infty}(I_n,L^2(\mathcal{D}))}^2 + \| h^2 u \|_{L^{\infty}(I_n,H^2({\mathcal{D}}))}^2 \right).
\end{align*}
Combining (\ref{est-step-new-1}) with \eqref{std-est-exp} and $C_M \deltat{n} \le 1/2$ yields
\begin{eqnarray*}
\lefteqn{e^{- \sum_{k=1}^n \alpha_k} \| E_h^{n} \|_{L^2({\mathcal{D}})}^2
\le  \| E_h^{0} \|_{L^2({\mathcal{D}})}^2
+ C_M \sum_{k=1}^n \| h^2 \partial_t u \|_{L^2(I_k,H^2({\mathcal{D}}))}^2} \\
&\enspace& \quad +
C_M \sum_{k=1}^n \deltat{k} \left( \| \deltat{}^2 \triangleL(\partial_{tt} u) \|_{L^{\infty}(I_k,L^2({\mathcal{D}}))}^2
+  \| \deltat{}^2 u  \|_{W^{2,\infty}(I_k,L^2(\mathcal{D}))}^2 + \| h^2 u \|_{L^{\infty}(I_k,H^2({\mathcal{D}}))}^2 \right).
\end{eqnarray*}
The
inequality
$\sum_{k=1}^n \alpha_k \le 4 C_M t_n$ finishes the proof.
\end{proof}

\begin{lemma}[Full energy-error estimate for $E_h^{n}$]
\label{energy-estimate-full-E-h-n}
We use the notation and the assumptions of Lemma \ref{energy-estimate-step-n}). It holds
\begin{eqnarray*}
\lefteqn{ \energy{ E_h^{n}}^2
\le e^{4 C_M t_n} \energy{ E_h^{0} }^2
+ C_M e^{4 C_M t_n} \| h \partial_t u \|_{L^2(0,t_n,H^2({\mathcal{D}}))}^2} \\
&\enspace& \quad +
C_M e^{4 C_M t_n} \sum_{k=1}^n \deltat{k} \left(  \| \deltat{}^2 \triangleL(\partial_{tt} u) \|_{L^{\infty}(I_k,H^1({\mathcal{D}}))}^2
+ \| \deltat{}^2 u  \|_{W^{2,\infty}(I_k,H^1(\mathcal{D}))}^2 + \| h u \|_{L^{\infty}(I_k,H^2({\mathcal{D}}))}^2 \right).
\end{eqnarray*}
\end{lemma}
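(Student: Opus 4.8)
The plan is to reproduce verbatim the Gronwall argument already carried out for the $L^2$-estimate in Lemma~\ref{L2-estimate-full-E-h-n}, with the energy norm in place of the $L^2$-norm. The only input needed is the per-step energy bound \eqref{error-enrgy-estimate-prelim} from Lemma~\ref{energy-estimate-step-n}. First I would rewrite that bound as a scalar recursion $a_n \le (1+\alpha_n) a_{n-1} + b_n$ by setting $a_n := \energy{E_h^n}^2$, $\alpha_n := \tfrac{2 C_M \deltat{n}}{1 - C_M \deltat{n}}$, and
\[
b_n := C_M \| h \partial_t u \|_{L^2(I_n,H^2({\mathcal{D}}))}^2 + C_M \deltat{n} \left( \| \deltat{}^2 \triangleL(\partial_{tt} u) \|_{L^{\infty}(I_n,H^1({\mathcal{D}}))}^2 + \| \deltat{}^2 u \|_{W^{2,\infty}(I_n,H^1(\mathcal{D}))}^2 + \| h u \|_{L^{\infty}(I_n,H^2({\mathcal{D}}))}^2 \right),
\]
so that the multiplicative factor $\tfrac{1+C_M\deltat{n}}{1-C_M\deltat{n}}$ appearing in \eqref{error-enrgy-estimate-prelim} is exactly $1+\alpha_n$.

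Next I would apply the elementary discrete Gronwall inequality \eqref{std-est-exp} established in the proof of Lemma~\ref{L2-estimate-full-E-h-n}, which immediately gives $a_n \le e^{\sum_{k=1}^n \alpha_k}\bigl(a_0 + \sum_{k=1}^n b_k\bigr)$. To convert this into the stated form I would use two simplifications. Because the hypothesis $\deltat{n} < (2C_M)^{-1}$ forces $1 - C_M \deltat{n} > \tfrac12$, one has $\alpha_k \le 4 C_M \deltat{k}$ and hence $\sum_{k=1}^n \alpha_k \le 4 C_M \sum_{k=1}^n \deltat{k} = 4 C_M t_n$, producing the prefactor $e^{4 C_M t_n}$. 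For the first term of $b_k$ I would use additivity of the $L^2$-in-time norm over the disjoint intervals $I_k$, namely $\sum_{k=1}^n \| h \partial_t u \|_{L^2(I_k,H^2({\mathcal{D}}))}^2 = \| h \partial_t u \|_{L^2(0,t_n,H^2({\mathcal{D}}))}^2$, which collapses that part of the sum into a single norm over $(0,t_n)$; the remaining $\deltat{k}$-weighted terms are kept as a sum, yielding the claimed estimate.

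I expect no genuine obstacle here: the proof is structurally identical to the $L^2$-case and relies only on the already-proven per-step bound. The single point requiring a little care is the bookkeeping of constants — one must reuse the same generic $C_M$ from Lemma~\ref{energy-estimate-step-n} (which already absorbs the norm-equivalence constants $C_E,c_E$ and the stability constants $C_{L^2}$, $C_{W^{1,\infty}}$, $C_1$) so that one and the same $C_M$ appears both in the exponent $e^{4 C_M t_n}$ and as the overall multiplicative constant, matching the statement.
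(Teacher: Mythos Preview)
Your proposal is correct and follows exactly the approach the paper takes: the paper's own proof is a one-line remark that the argument is analogous to Lemma~\ref{L2-estimate-full-E-h-n}, combining the discrete Gronwall inequality \eqref{std-est-exp} with the per-step bound of Lemma~\ref{energy-estimate-step-n}, which is precisely what you describe.
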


\begin{proof}
The proof is analogous to the proof of Lemma \ref{L2-estimate-full-E-h-n} by combining equation (\ref{std-est-exp}) with Lemma \ref{energy-estimate-step-n}.
\end{proof}

Following the ideas of \cite{KaM98}, we want to show that the solution $u_h^n$ of the original discrete problem (\ref{cnd-problem}) is identical to the solution $U^n_h$ of the auxiliary problem (\ref{aux-problem-bwe}) implying that the estimates in Lemma \ref{L2-estimate-full-E-h-n} and \ref{energy-estimate-full-E-h-n} hold equally for $u_h^n$. For that purpose, we want to show that if $\deltat{n}$ is sufficiently small
it holds $\| U^n_h \|_{L^{\infty}({\mathcal{D}})} \le M$ for all $n \ge 0$. Then, by the properties of $f_M$, we obtain equality of $u_h^n$ and $U_h^n$. To show the desired boundedness we can use again Lemma \ref{lemma-thomee}, which guarantees
$\| v_h \|_{L^{\infty}({\mathcal{D}})} \le  C_{\infty} \ell_h \| \nabla v_h \|_{L^2({\mathcal{D}})}$
for all $v_h\in S_h$.

\begin{conclusion}
\label{existence-of-M}
Let assumptions (A1)-(A7) be fulfilled and let $h$ and $\deltat{n}$ be such that $\ell_h (\hmax+\deltat{n}^2) \rightarrow 0$ for $h,\deltat{n}\rightarrow 0$. Then, for all small enough $h$ and $\deltat{n}$, the corresponding solution $U^n_h$ (i.e. the solution for $f_M$ as specified in Lemma \ref{makridakis-lemma}) fulfills 
$$\| U^n_h \|_{L^{\infty}({\mathcal{D}})} \le M.$$
\end{conclusion}

\begin{proof}
We have $U^n_h = E_h^n + \PR(u^n)$. Using (\ref{constants-in-norm-equivalence}) and Lemma \ref{lemma-thomee} we get
\begin{align*}
\| U^n_h \|_{L^{\infty}({\mathcal{D}})} &\le \| \PR(u^n) \|_{L^{\infty}({\mathcal{D}})} + C_{\infty} \sqrt{C_E} \ell_h \energy{E_h^n}.
\end{align*}
The term $\PR(u^n)$ is uniformly bounded by 
(A7)
and the Poincar\'e-Friedrichs inequality with $\| P_h(u^n) \|_{L^{\infty}(\mathcal{D})} \le C_{W^{1,\infty}} \mbox{diam}(\mathcal{D}) \| \nabla u^n \|_{L^{\infty}(\mathcal{D})}$. Let us hence consider the second term. Fixing the model problem (and assuming (A1)-(A7)), the only variables are $h$ and $\deltat{n}$.
With this, we can write Lemma \ref{energy-estimate-full-E-h-n} as: there exists a constant $C(M)$, which is independent of $h$ and $\deltat{n}$ such that
\begin{align*}
 \energy{E_h^n} \le C(M) (\hmax+\underset{1\le n\le N}{\max}\deltat{n}^2).
\end{align*}
Consequently, for each given 
$\epsilon>0$, we can pick $h(M)$ and $\deltat{n}(M)$ small enough so that
\begin{align*}
 \ell_h(M) \energy{E_h^n} \le C(M) \ell_h(M) (\hmax(M)+\underset{1\le n\le N}{\max}\deltat{n}^2(M) \le \epsilon.
\end{align*}
If we choose 
$h$ and $\deltat{n}$ small enough so that $C_{\infty} \sqrt{C_E} \ell_h(M) \energy{E_h^n} \le \| u \|_{W^{1,\infty}(I_n, W^{1,\infty}(\mathcal{D}))}$, then we obtain $\| U^n_h \|_{L^{\infty}({\mathcal{D}})} \le M$ as desired.
\end{proof}

Observe that Conclusion \ref{existence-of-M} proves Proposition \ref{prop-exist-and-unique}.
We are now prepared to conclude the proof of Theorem \ref{final-a-priori-error-estimate}.

\begin{proof}[Proof of Theorem \ref{final-a-priori-error-estimate}]
We pick 
$h(M)$ and $\deltat{n}(M)$ small enough so that the bound in
Conclusion \ref{existence-of-M} 
holds true.
Since $\| U^n_h \|_{L^{\infty}({\mathcal{D}})} \le M$ we obtain from the properties of $f_M$ (see Lemma \ref{makridakis-lemma}) that $U^n_h$ must be identical to the solution $u_h^n$ of (\ref{cnd-problem}) for every time step $n\ge1$. Hence, we obtain the splitting
\begin{align*}
u_h^n - u^n  = E_h^n +  (\PR(u^n) -  u^n),
\end{align*}
where $E_h^n$ can be estimated by Lemma \ref{L2-estimate-full-E-h-n}, respectively Lemma \ref{energy-estimate-full-E-h-n} and where $(\PR(u^n) -  u^n)$ can be estimated in the usual matter. A Lagrange-interpolation error estimate for the initial value $u_0\in H^2({\mathcal{D}})$ concludes the proof.
\end{proof}

\section{Numerical experiments}
\label{section-num-sec}

In this section we investigate the performance of the one-stage Gauss-Legendre implicit Runge-Kutta scheme stated in Definition \ref{crank-nic-gpe}
and compare it with the approximations obtained with the Backward-Euler method \eqref{bwe-problem} to stress the importance of the discrete mass conservation. We consider the computational domain $\mathcal{D}:=[-6,6]^2$ and the time interval $[0,T_{\mbox{\tiny max}}]:=[0,100]$. We seek a solution $u \in C^0([0,T),H^1_0({\mathcal{D}}))$ to the time-dependent Gross-Pitaevskii equation
\begin{align}
\label{modprobnum}\ci \partial_t u &= -\frac{1}{2} \triangle u + V \hspace{2pt} u  - \Omega \mathcal{L}_z u + \beta |u|^2 u \qquad \mbox{in} \enspace {\mathcal{D}},
\end{align}
where we recall $\mathcal{L}_z = - \ci \left( x \partial_y - y \partial_x \right)$. We use the following configuration. We chose $\beta=100$, $\Omega=0.8$ and the harmonic potential
$$V(\mathbf{x}):=\frac{\gamma_x^2 x^2+ \gamma_y^2 y^2}{2}$$
with trapping frequencies $\gamma_x=0.9$ and $\gamma_x=1.1$. The initial value $u_0=u(\cdot,0)$ is chosen as the $L^2$-normalized ground state eigenvector of the Gross-Pitaevskii operator $G_0(v):=-\frac{1}{2} \triangle v + V_0 \hspace{2pt} v  - 0.8 \mathcal{L}_z v + 100 |v|^2 v$ with $V_0(\mathbf{x})=\frac{1}{2}(x^2+y^2)$ and corresponding ground state energy $E_0=3.1938$ (cf. \cite{BWM05}). We computed this ground state using the Discrete Normalized Gradient Flow method proposed in \cite{BaD04}. Starting from this setting, we wish to simulate the dynamics of $u_0$ in the anisotropic trap $V$, i.e. we solve \eqref{modprobnum} numerically. The problem is picked in such a way that vortices, i.e. density singularities, are formed in the condensate (see Figure \ref{comp-cn-be-at-T-1} and \ref{cn-series}). We define the energy by
$$\mathcal{E}(v):= (v,v)_{E({\mathcal{D}})} + \int_{\mathcal{D}} ( \kappa |v|^2 + \frac{\beta}{2} |v|^4),$$
which is a conservative property of equation \eqref{modprobnum}.

\begin{figure}[h!]
\centering
\includegraphics[scale=0.35]{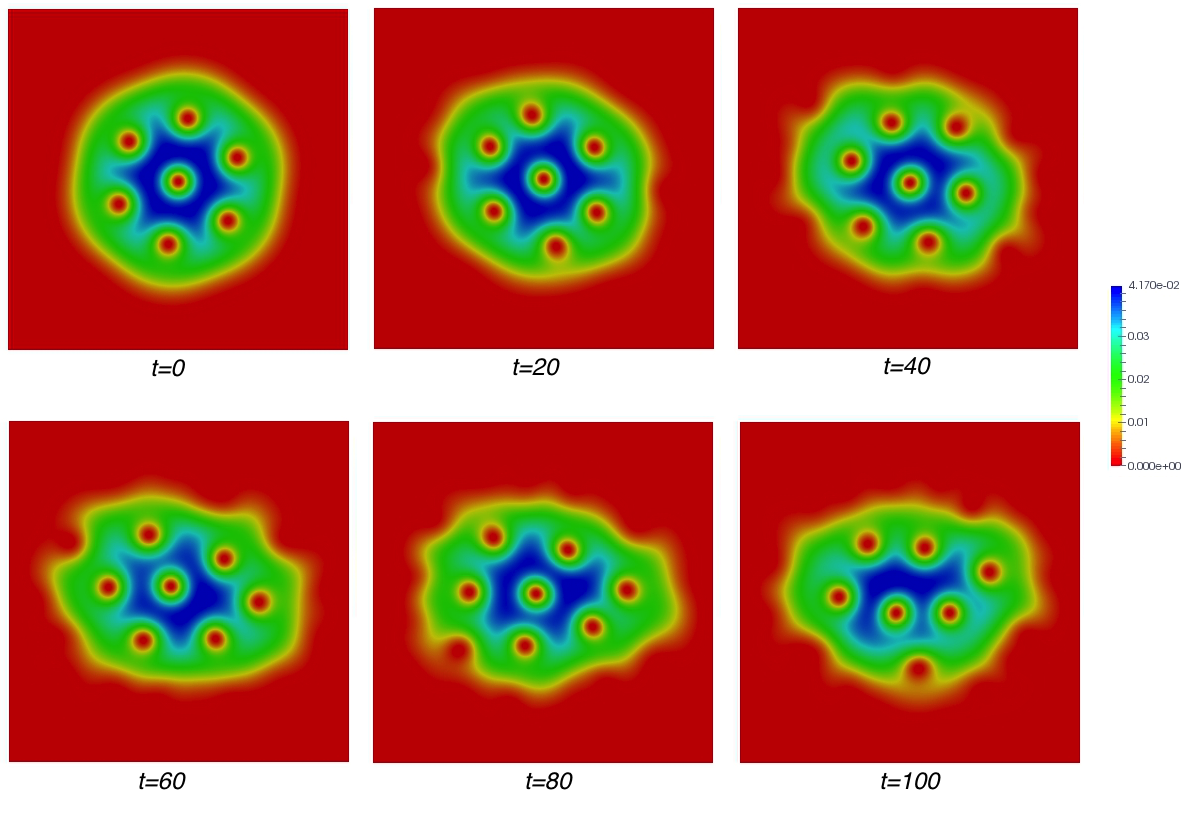}
\caption{\it The figures show the approximations for the particle density obtained with the Crank-Nicolson scheme for large time steps $\deltat{}=0.1$ at times $t=0$, $t=20$, $t=40$, $t=60$, $t=80$ and $t=100$. We observe a reduction of the number of vortices. The mass is fully preserved and the energy up to a relative error of $0.026\%$.}
\label{cn-series}
\end{figure}

\begin{table}[h!]
\caption{\it The table shows the loss in energy and mass of the Backward-Euler scheme after $100$ time steps for different time step sizes $\deltat{}$, compared wth the corresponding quantities obtained with the Crank-Nicolson-type IRK scheme. Recall that the IRK always conserves the mass.}
\label{table-comparison-energy}
\begin{center}
\begin{tabular}{|c|c||c|c||c|c|}
\hline $\deltat{}$      & $T$
& $\|u_{h,BE}^N\|_{L^2(\mathcal{D})}$
& $\|u_{h,IRK}^N\|_{L^2(\mathcal{D})}$
& $\mathcal{E}(u_{h,BE}^N)$
& $\mathcal{E}(u_{h,IRK}^N)$ \\
\hline
\hline $1$    & 100   & $2.6 \cdot 10^{-17}$ & 1.0 & $6.9 \cdot 10^{-34}$ & 3.36455 \\ %
\hline $10^{-1}$ & 10     & $0.13275$ & 1.0 & 0.02309 & 3.19203 \\
\hline $10^{-2}$ & 1       & $0.91676$ & 1.0 & 2.52235 & 3.19383 \\
\hline $10^{-3}$ & 0.1    & $0.99905$ & 1.0 & 3.18549 & 3.19383 \\
\hline
\end{tabular}\end{center}
\end{table}

\begin{figure}[h!]
\centering
\includegraphics[scale=0.35]{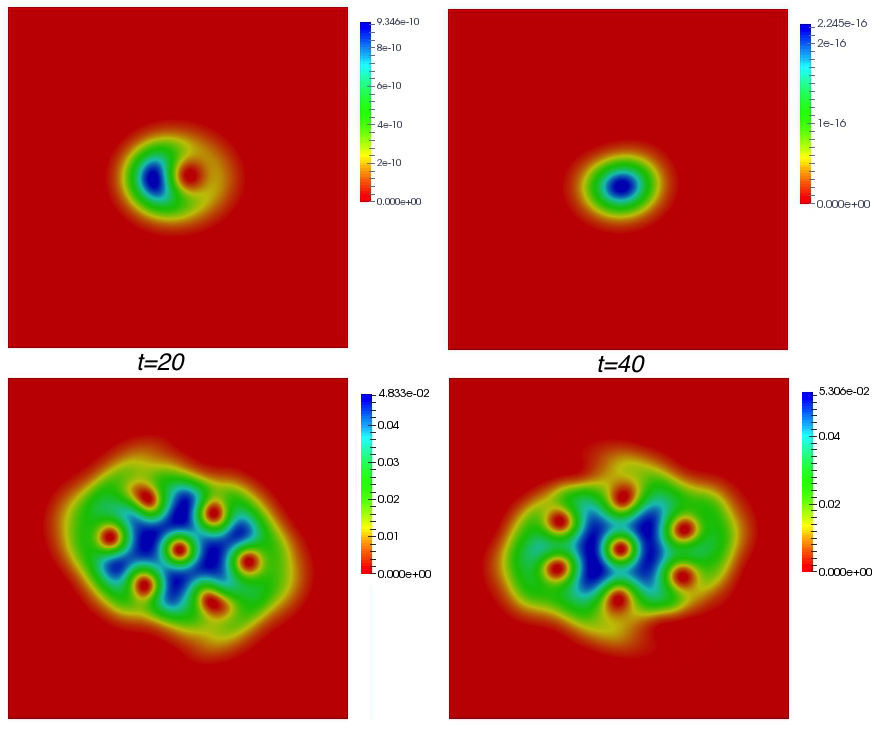}
\caption{\it The figures shows the approximations for the particle density obtained with the Backward-Euler (upper row) and the IRK scheme (lower row) for the time step size $\deltat{}=1$ at times $t=20$ and $t=40$.}
\label{be-series}
\end{figure}

\begin{figure}[h!]
\centering
\includegraphics[scale=0.38]{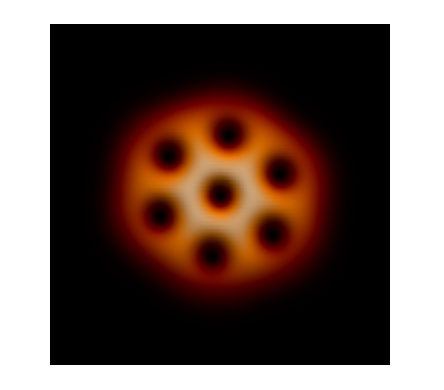}
\hspace{0pt}
\includegraphics[scale=0.38]{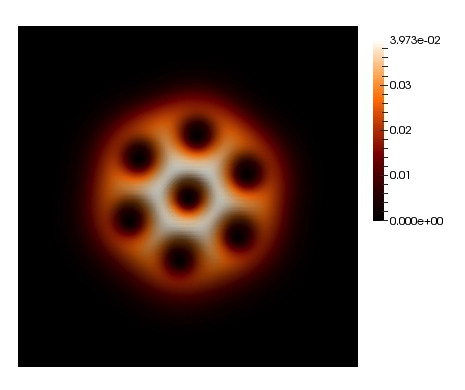}
\caption{\it The figure depicts numerical approximations for the particle density $\rho(\cdot,T):=|u(\cdot,T)|^2$ at $T=1$. The left approximation is a Backward-Euler approximation computed with time step size $\deltat{}=10^{-2}$, whereas the IRK approximation on the right is computed in one time step, i.e. with $\deltat{}=1$.}
\label{comp-cn-be-at-T-1}
\end{figure}

We demonstrate the efficiency of the Crank-Nicolson-type IRK scheme (as stated in Definition \ref{crank-nic-gpe}) by showing that large time steps are allowed, thanks to the mass conservation property. The Backward-Euler approach on the other hand (despite being unconditionally stable) does not allow large time steps, since this results in a severe loss of mass which lets the corresponding approximations vanish quickly. In all our computations we use a uniformly refined triangular mesh $\T_h$ with $66.049$ nodes. That means that the discrete space $S_h$ contains $132.098$ degrees of freedom (minus the ones from the boundary condition). We use uniform time steps and denote $\deltat{}:=\deltat{n}$ for simplicity.

We note that the computational complexity of the IRK scheme \eqref{cnd-problem} and the Backward-Euler scheme \eqref{bwe-problem} is roughly the same in our implementation. Since both schemes are implicit, they require an iterative Newton method in each time step where we observe a comparable number of iterations to reach a given tolerance. In the following, we shall denote Backward-Euler approximations by $u_{h,BE}^n$ and IRK approximations by $u_{h,IRK}^n$.

Due to the structure of the problem, we could use exact integration when assembling the system matrices and load vector for our problem. Furthermore, all linear systems were solved with an UMFPACK direct solver. The only reason why we were not computationally exact (up to machine precision), was that we prescribed a residual tolerance of order $\mathcal{O}(10^{-8})$ for the Newton-algorithm to abort. This inexactness did not have an observable effect on the conservation of mass for the IRK in any of the computations. Concerning the energy, a small deviation from the exact energy was observable over time for the IRK, however in a negligible range. For instance, for large steps $\deltat{}=1$, the energy was still preserved up to an error of $5.3\%$ at $T=100$. For slightly smaller time steps with $\deltat{}=0.1$ the conservation of energy already improved to a relative error of below $0.03\%$ which is insignificant considering that the reference energy (at $t=0$) is typically already polluted by discretization errors.
Using a time step size $\deltat{}=0.1$ we simulated the dynamics of the particle density on the time interval $[0,100]$. The corresponding results are depicted in Figure \ref{cn-series}. We observe that the condensate with initially seven vortices collapses to a condensate with six vortices at $T=100$.

This is in strong contrast to the Backward-Euler scheme that, though unconditionally stable, suffers from a major loss of energy and mass. This is clearly shown in Table \ref{table-comparison-energy}. For time step sizes of order $\deltat{}=1$, basically all energy and mass is lost after $100$ time steps. In our example the situation gradually improves with decreasing time steps sizes, however, to obtain an acceptable loss of mass and energy after 100 time steps, the Backward-Euler method requires time steps sizes of at least $\deltat{}=10^{-3}$. The significance of the preservation properties is further emphasized in Figure \ref{be-series}. Here we compare Backward-Euler and IRK approximations for large time steps $\deltat{}=1$. We can see that even though $|u_{h,IRK}^n|^2$ is not particularly accurate, it still preserves the structure of the condensate, whereas $|u_{h,BE}^n|^2$ quickly collapses into a vanishing mass that fully contradicts the correct physical behavior.

Finally, in Figure \ref{comp-cn-be-at-T-1} we compare the IRK approximation after one single step of order $\deltat{}=1$ with the Backward-Euler approximation at the same time ($T=1$) but using $100$ time steps with size $\deltat{}=10^{-2}$ each. Even though the costs for the Backward-Euler scheme are a $100$ times higher as for the IRK approach to obtain a comparable result, the approximation $|u_{h,BE}^n|^2$ has clearly not yet the quality of $|u_{h,IRK}^n|^2$.

In summary we conclude that even though the Backward-Euler scheme seems to be unconditionally stable, the loss of mass and energy has a tremendous impact on the quality of the obtained approximations if the time-step size is not chosen very small. The IRK scheme of Crank-Nicolson-type on the other hand does not appear to have such restrictions.

\medskip
$\\$
{\bf Acknowledgements.}
The authors thank the anonymous referees for their very valuable comments on the initial version of this manuscript, which helped to improve the contents of this paper significantly. Furthermore, the authors thank J\"urgen Ro{\ss}mann for the helpful and enlightening discussions on H\"older-estimates for the Green's function of $L$ as they influence the validity of assumption (A7).

\def\cprime{$'$}

\end{document}